\newcommand{\cov}{\mathrm{cov}}
\newcommand{\M}{\mathcal M}
\newcommand{\C}{\mathcal C}
\newcommand{\IN}{\mathbb N}
\newcommand{\IR}{\mathbb R}
\newcommand{\w}{\omega}
\newcommand{\e}{\varepsilon}
\newcommand{\defeq}{\overset{\mbox{\tiny\sf def}}=}
\newcommand{\Ball}{\mathsf B}
\newcommand{\II}{\mathbb I}
\newcommand{\A}{\mathcal A}
\newcommand{\U}{\mathcal U}
\newcommand{\V}{\mathcal V}
\newcommand{\pr}{\mathrm{pr}}
\newcommand{\supp}{\mathrm{supp}}
\newtheorem{theorem}{Theorem}
\newtheorem{proposition}{Proposition}
\newtheorem{claim}{Claim}
\newtheorem{problem}{Problem}
\newtheorem{lemma}{Lemma}
\newtheorem{corollary}{Corollary}
\theoremstyle{definition}
\newtheorem*{definition*}{Definition}
\newtheorem{example}{Example}
\title{Polyboundedness of zero-closed semigroups}
\author{Taras Banakh and Andriy Rega}
\address{Jan Kochanowski University in Kielce (Poland), and Ivan Franko National University of Lviv (Ukraine)}
\email{t.o.banakh@gmail.com, andrijrega@gmail.com}
\subjclass{03E17, 03E50, 20M10, 22A15}
\keywords{Zero-closed semigroup, polybounded semigroup, polyfinite semigroup,  polyboundedness number, subinvariant metric, compact Hausdorff topological semigroup}
\begin{document}
\begin{abstract}
The {\em polyboundedness number} $\cov(\A_X)$ of a semigroup $X$ is the smallest cardinality of a cover of $X$ by sets of the form $\{x\in X:a_0xa_1\cdots xa_n=b\}$ for some $n\ge 1$, $b\in X$ and $a_0,\dots,a_n\in X^1=X\cup\{1\}$. Semigroups with finite polyboundedness number are called {\em polybounded}. A semigroup $X$ is called {\em zero-closed} if $X$ is closed in its $0$-extension $X^0=\{0\}\cup X$ endowed with any Hausdorff semigroup topology. We prove that any zero-closed infinite semigroup $X$ has $\cov(\A_X)<|X|$. Under Martin's Axiom, a zero-closed semigroup is polybounded if $X$ admits a compact Hausdorff semigroup topology or $X$ has a separable complete subinvariant metric.
\end{abstract}
\maketitle

For a semigroup $X$ its
\begin{itemize}
\item {\em $0$-extension} is the semigroup $X^0=X\cup\{0\}$ where $0\notin X$ is any element such that $0x=0=x0$ for every $x\in X^0$;
\item {\em $1$-extension} is the semigroup $X^1=X\cup\{1\}$ where $1\notin X$ is any element such that $1x=x=x1$ for every  $x\in X^1$.
\end{itemize}

Following \cite{CCCS}, we call a semigroup $X$ {\em zero-closed} if $X$ is closed in its $0$-extension $X^0=\{0\}\cup X$ endowed with an arbitrary Hausdorff semigroup topology. A topology $\tau$ on a semigroup $S$ is called a {\em semigroup topology} if it makes the binary operation $S\times S\to S$, $(x,y)\mapsto xy$, of $S$ continuous.

The zero-closedness is the weakest property among the categorical closedness property, considered in \cite{Ban}, \cite{CCUS}, \cite{CCCS}, \cite{ICT1S}, \cite{BB}, \cite{CCCS}, \cite{ACS}.

In this paper we find conditions guaranteeing that a zero-closed semigroup is polybounded. Polybounded semigroups were introduced and studied in \cite{CCCS}, where they were defined with the help of semigroup polynomials.

A {\em semigroup polynomial} on a semigroup $X$ is a function $f:X\to X$ of the form $$f(x)=a_0xa_1\cdots xa_n$$ for some $n\in\IN$ and $a_0,\dots,a_n\in X^1$. The elements $a_0,\dots,a_n$ are called the coefficients of the polynomial $f$, and the number $n$ is called the {\em degree} of the polynomial $f$ and is denoted by $\deg(f)$.

Following \cite{CCCS}, we define a semigroup $X$ to be {\em polybounded} if $X=\bigcup_{i=1}^n f_i^{-1}(b_i)$ for some elements $b_1,\dots,b_n\in X$ and some semigroup polynomials $f_1,\dots,f_n$ on $X$. The smallest possible number $n$ in this definition in called the {\em polyboundedness number} of $X$.

In fact, the polyboundedness number can be equivalently defined for any semigroup $X$ as the covering number of the family $\A_X$ of algebraic sets in $X$. A subset $A$ of a semigroup $X$ is called {\em algebraic} if $$A=\{x\in X:a_0xa_1\cdots xa_n=b\}$$ for some number $n\in\IN$ and elements
$b\in X$ and $a_0,\dots,a_n\in X^1$.

Let $\A_X$ be the family of all algebraic sets in a semigroup $X$. For a subset $S\subseteq X$, the cardinal
$$\textstyle\cov(S;\A_X)\defeq\{|\C|:\C\subseteq \A,\;\;S\subseteq \bigcup\C\}$$ is called the {\em polyboundedness number} of $S$ in $X$. It is equal to the smallest cardinality of a cover of $S$ by algebraic subsets of $X$. If $S=X$, then we shall write $\cov(\A_X)$ instead of $\cov(X;\A_X)$. A subset $S$ of a semigroup $X$ is called {\em polybounded in} $X$ if $\cov(S;\A_X)$ is finite.

Observe that a semigroup $X$ is polybounded if and only if $\cov(\A_X)<\w$.
Polybounded semigroups play an important role in the theory of categorically closed semigroups \cite{CCCS}, \cite{ACS} and have many nice properties. For example, cancellative polybounded semigroups are groups \cite[1.8]{CCCS}, polybounded paratopological groups are topological groups \cite[8.1]{CCCS}, each polybounded group $X$ is closed in any $T_1$ topological semigroup containing $X$ as a discrete subsemigroup \cite[6.1]{CCCS}. The polyboundedness number of a (topologized) semigroup is studied in \cite{BR}.

In \cite[Theorem 6.4]{CCCS} Banakh and Bardyla proved that every zero-closed countable semigroup is polybounded. The following theorem generalizes this results to semigroups of arbitrary cardinality.

\begin{theorem}\label{t:poly} Every zero-closed semigroup $X$ has polyboundedness number $\cov(\A_X)<\max\{2,|X|\}$.
\end{theorem}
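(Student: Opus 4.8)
The plan is to separate the essentially trivial finite case from the infinite one, where the real work lies. If $X$ is finite, I would show $\cov(\A_X)=1$: the minimal ideal of $X$ is completely simple, so it contains an idempotent $e$ lying in a maximal subgroup $H$, a finite group with identity $e$. Fixing $a\in H$, the word $f(x)=(axa)^{|H|}$ is a semigroup polynomial; since $axa\in H$ for every $x\in X$ and $H$ has order $|H|$, we get $f\equiv e$, whence $X=f^{-1}(e)\in\A_X$ and $\cov(\A_X)=1<\max\{2,|X|\}$. So assume $X$ is infinite and argue by contraposition: suppose $\cov(\A_X)=\kappa\defeq|X|$ (recall $\cov(\A_X)\le|X|$ always, as singletons are algebraic). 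The first step is to reduce non-zero-closedness to a combinatorial statement: declaring each point of $X$ isolated, a filter $\F$ on $X$ is the trace of the neighbourhood filter of $0$ of a Hausdorff semigroup topology on $X^0$ with $0\in\overline X$ \emph{iff} $\F$ is free and satisfies (ii) $\forall F_0\in\F\,\exists F_1,F_2\in\F\ (F_1F_2\subseteq F_0)$ and (iii) $\forall x\in X\,\forall F_0\in\F\,\exists F_1\in\F\ (xF_1\cup F_1x\subseteq F_0)$. These encode continuity of multiplication at $(0,0)$ and at $(x,0),(0,x)$; continuity elsewhere is automatic by isolation, and freeness yields the Hausdorff separation of $0$ from $X$. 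Thus it suffices to build such a free filter.

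The one-variable ingredients are easy. For an algebraic $A=\{x:a_0xa_1\cdots xa_n=b\}$ and $c\in X^1$, substituting $cx$ (respectively $xc$) for $x$ shows that $c^{-1}A=\{x:cx\in A\}$ and $Ac^{-1}=\{x:xc\in A\}$ are again algebraic; hence the ideal of \emph{small} sets (those coverable by fewer than $\kappa$ algebraic sets) is invariant under one-sided translations. Since $\cov(\A_X)=\kappa$, no family of fewer than $\kappa$ algebraic sets covers $X$, so the dual \emph{co-small} filter (sets with small complement) is proper, and it is free because singletons are algebraic. Translation invariance of the small ideal is precisely condition (iii).

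The crux is condition (ii). Writing $A=X\setminus F_0$ (small), it asks for co-small $F_1,F_2$ with $F_1F_2\cap A=\emptyset$, i.e.\ for a covering of the two-variable solution set $\{(x,y):xy\in A\}$ by a ``cross'' $(C_1\times X)\cup(X\times C_2)$ with $C_1,C_2$ small. All of its rows and columns are algebraic (fixing one variable specialises the polynomial to a one-variable one), yet this does \emph{not} force the desired cross-covering, and in fact the co-small filter need not satisfy (ii): for $(\IN,+)$ it does, since one may push $F_1$ past the finite set $A$, whereas for an additive group the sum of two co-small sets already exhausts $X$ and one must instead use a filter of ``tails''. I therefore expect the filter to be produced by a transfinite recursion $(U_\alpha)_{\alpha<\kappa}$, enumerating $X=\{x_\alpha:\alpha<\kappa\}$ and requiring $U_{\alpha+1}U_{\alpha+1}\subseteq U_\alpha$, $\;x_\alpha U_{\alpha+1}\cup U_{\alpha+1}x_\alpha\subseteq U_\alpha$ and $x_\alpha\notin U_{\alpha+1}$ (the latter two securing (iii) and freeness), with the hypothesis $\cov(\A_X)=\kappa$ used to keep each $U_\alpha$ nonempty.

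The main obstacle is exactly maintaining this nonemptiness, i.e.\ properness of the generated filter, against the quadratic constraint $U_{\alpha+1}U_{\alpha+1}\subseteq U_\alpha$: the translation conditions only cost small sets, but the product condition may not, so the successor step is where the covering hypothesis must really be spent. At limit stages one must control intersections of $<\kappa$ many of the $U_\alpha$, and here I expect the regularity and cardinal arithmetic of $\kappa$ to enter (so that $<\kappa$-unions of small sets stay small and the chain does not collapse to $\emptyset$ prematurely). This joint-continuity step is the technical heart of the proof; everything else is bookkeeping with the pullback identity of the second paragraph.
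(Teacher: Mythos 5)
Your finite case is fine and matches the paper's, and your reduction of non-zero-closedness to the existence of a free filter at $0$ satisfying the translation condition (iii) and the product condition (ii) is the right framework; your observation that one-sided translates of algebraic sets are algebraic correctly disposes of (iii). But the proof has a genuine gap exactly where you flag one: you never construct neighbourhoods of $0$ satisfying the quadratic constraint, you only say you ``expect'' a transfinite recursion with $U_{\alpha+1}U_{\alpha+1}\subseteq U_\alpha$ to work and identify the successor and limit stages as unresolved obstacles. That is the whole content of the theorem in the uncountable case, and your proposed decreasing-chain scheme is not how it is resolved; as you note yourself, it threatens to collapse at limit stages and to depend on regularity of $\kappa$, which the theorem does not assume. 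The paper's key idea is to build the neighbourhoods \emph{positively} rather than by shrinking: choose an increasing chain of subsemigroups $C_\alpha\subseteq X^1$ (generated by the first $\alpha$ points of an enumeration of $X$ together with the previously chosen generic points) and, using $\cov(\A_X)\ge|X|>|C_\alpha\times P(C_\alpha)|$, a point $x_\alpha$ with $f(x_\alpha)\notin C_\alpha$ for every polynomial $f$ with coefficients in $C_\alpha$; then set $U_\alpha=\bigcup_{\beta\ge\alpha}\{f(x_\beta):f\in P(C_\alpha)\}$. Each $U_\alpha$ is then automatically a subsemigroup (a product $f_1(x_{\gamma_1})f_2(x_{\gamma_2})$ is again a single polynomial value at $x_{\max(\gamma_1,\gamma_2)}$, because the other factor and its coefficients lie in $C_{\max(\gamma_1,\gamma_2)}$), satisfies $C_\alpha U_\alpha C_\alpha\subseteq U_\alpha$, and is disjoint from $C_\alpha$; so conditions (ii) and (iii) hold with no limit-stage bookkeeping at all. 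Without this (or some equivalent) construction your argument does not close.

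A second, smaller gap: your counting argument only works for uncountable $X$. When $|X|=\omega$ the sets $C_\alpha\times P(C_\alpha)$ are already countable, so $\cov(\A_X)\ge|X|$ does not let you choose the generic points $x_\alpha$, and the countably infinite case needs a genuinely different argument (the paper routes it through Theorem~\ref{t:local} applied to the $\{0,1\}$-valued metric, using the existence of a minimal ideal from Proposition~\ref{p:ideal}). Your uniform treatment of ``infinite $X$'' silently elides this.
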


In light of Theorem~\ref{t:poly} it should be mentioned that the cardinal $\cov(\A_X)$ can be quite close to the cardinality of $X$: by \cite{BanS}, for every infinite cardinal $\kappa$ with $\kappa^+=2^\kappa$ there exists a zero-closed group $X$ of cardinality $|X|=\kappa^+$ with $\mathrm{cf}(\kappa)\le\cov(\A_X)\le\kappa<|X|$.
\smallskip

A subset $I$ of a semigroup $X$ is called an {\em ideal} if $IX\cup XI\subseteq I$. An ideal $I$ in $X$ is called {\em minimal} if $I\ne \emptyset$ and $I\subseteq J$ for any nonempty ideal $J$ in $X$. A minimal ideal if exists, is unique, and is called {\em the minimal ideal} of $X$. A semigroup $X$ has a minimal ideal if and only if $\bigcap_{a\in X}XaX\ne\emptyset$, in which case $\bigcap_{a\in X}XaX$ is the minimal ideal of $X$.

\begin{proposition}\label{p:ideal} Every  zero-closed semigroup $X$ has a minimal ideal.
\end{proposition}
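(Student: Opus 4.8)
The plan is to argue by contradiction: assuming $X$ has no minimal ideal, I will manufacture a Hausdorff semigroup topology on the $0$-extension $X^0$ in which $X$ fails to be closed, contradicting zero-closedness. The starting observation is that the nonempty ideals of $X$ form a filter base: if $I$ and $J$ are ideals then so is $I\cap J$, and it is nonempty because $IJ\subseteq I\cap J$ and $IJ\ne\emptyset$. Using the characterization recalled just before the proposition, $X$ has a minimal ideal iff $\bigcap_{a\in X}XaX\ne\emptyset$; and since every ideal $I$ contains $XaX$ for each of its elements $a$ (as $XaX\subseteq XIX\subseteq I$), this intersection coincides with the intersection of all nonempty ideals. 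Hence the assumption that $X$ has no minimal ideal says precisely that this filter base has empty intersection.

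Next I would topologize $X^0$ by declaring every point of $X$ isolated and taking the sets $\{0\}\cup I$, with $I$ ranging over the nonempty ideals of $X$, as a neighborhood base at $0$. The filter-base property guarantees that this prescription defines a genuine topology. That $0$ lies in the closure of $X$ is immediate, since every basic neighborhood $\{0\}\cup I$ meets $X$ in the nonempty set $I$; thus $X$ will fail to be closed in $X^0$ as soon as the topology is shown to be Hausdorff.

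Two verifications carry the argument, and I expect them to be the only real work. First, Hausdorffness: distinct points of $X$ are separated by their open singletons, while a point $x\in X$ is separated from $0$ by finding a nonempty ideal $I$ with $x\notin I$; such an ideal exists exactly because the ideals have empty intersection, so this is where the no-minimal-ideal hypothesis is spent. Second, continuity of the multiplication $X^0\times X^0\to X^0$: the only nontrivial points are those with a $0$-coordinate, and there continuity follows from the absorption laws $II\subseteq I$, $xI\subseteq I$ and $Ix\subseteq I$ valid for every ideal $I$ and every $x\in X$, which let one map a basic neighborhood of $0$ into itself under multiplication.

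The main obstacle is purely in getting these two checks to cooperate: the topology must be coarse enough near $0$ to keep multiplication continuous (which forces the neighborhoods of $0$ to be ideals, as these absorb products) yet fine enough to be Hausdorff (which forces the ideals to shrink to the empty intersection). It is precisely the hypothesis that $X$ has no minimal ideal that reconciles these demands, and once both checks are in place the resulting Hausdorff semigroup topology witnesses that $X$ is not closed in $X^0$, contradicting zero-closedness. Therefore $X$ must have a minimal ideal.
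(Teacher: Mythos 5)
Your proof is correct and is essentially the paper's own argument: the paper takes as basic neighborhoods of $0$ the sets $\{0\}\cup XaX$ for $a\in X$, which generate the same neighborhood filter at $0$ as your sets $\{0\}\cup I$ over all nonempty ideals $I$, since each $XaX$ is a nonempty ideal and each nonempty ideal contains $XaX$ for any of its elements $a$. You merely spell out the Hausdorffness and continuity checks that the paper leaves implicit.
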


\begin{proof} Assuming that $\bigcap_{a\in X}XaX=\emptyset$, we conclude that $X$ is not closed in $X^0$ endowed with the Hausdorff semigroup topology $\tau$, generated by the base $\mathcal B=\big\{\{x\}:x\in X\big\}\cup\big\{\{0\}\cup XaX:a\in X\big\}$.
\end{proof}

For countable semigroups, Theorem~\ref{t:poly} (and also Theorem 6.4 of \cite{CCCS}) can be deduced from the following ``local'' characterization of polybounded zero-closed semigroups.

\begin{theorem}\label{t:local} A zero-closed semigroup $X$ is polybounded if and only if it admits a separable subinvariant pseudometric $d$ such that for some $o\in I\defeq \bigcap_{a\in X}XaX$ and $\e>0$ the set $\{x\in I:d(x,o)<\e\}$ is polybounded in $X$.
\end{theorem}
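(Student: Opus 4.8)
The plan is to prove the two implications separately, with all the work in the ``if'' direction. The forward implication is immediate: if $X$ is polybounded then every subset of $X$ is polybounded in $X$, in particular the whole minimal ideal $I$ (nonempty by Proposition~\ref{p:ideal}), so the zero pseudometric $d\equiv 0$ witnesses the condition, since then $\{x\in I:d(x,o)<\e\}=I$ for any $o\in I$ and any $\e>0$.

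For the nontrivial implication, assume $o\in I$, $\e>0$ and that $B\defeq\{x\in I:d(x,o)<\e\}$ is polybounded, say $B\subseteq\bigcup_{i=1}^m g_i^{-1}(c_i)$ for semigroup polynomials $g_1,\dots,g_m$ and $c_1,\dots,c_m\in X$. First I would reduce polyboundedness of $X$ to polyboundedness of $I$ in $X$: since $o\in I$ and $I$ is an ideal, the degree-one polynomial $x\mapsto oxo$ maps $X$ into $I$, so any cover $I\subseteq\bigcup_j f_j^{-1}(b_j)$ pulls back to the cover $X=\bigcup_j\{x\in X:f_j(oxo)=b_j\}$ of $X$ by algebraic sets. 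Hence it suffices to cover $I$ by finitely many algebraic sets. Next I would cover $I$ by $d$-balls: fixing a countable $d$-dense set $\{x_k:k\in\w\}\subseteq I$, separability gives $I=\bigcup_{k\in\w}B_k$ with $B_k\defeq\{x\in I:d(x,x_k)<\e\}$. The key point is that each $B_k$ is polybounded in $X$ with a uniform bound. Indeed, since $I=\bigcap_{a\in X}XaX\subseteq Xx_kX$, there are $u_k,v_k\in X$ with $u_kx_kv_k=o$; then for $y\in B_k$ the element $u_kyv_k$ lies in $I$ and, by subinvariance of $d$, $d(u_kyv_k,o)=d(u_kyv_k,u_kx_kv_k)\le d(y,x_k)<\e$, so $u_kyv_k\in B$. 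Consequently $B_k\subseteq\bigcup_{i=1}^m\{y\in I:g_i(u_kyv_k)=c_i\}$, a union of $m$ algebraic sets whose targets $c_1,\dots,c_m$ do not depend on $k$. Thus $I$, and therefore $X$, is covered by countably many algebraic sets, i.e. $\cov(\A_X)\le\w$.

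The main obstacle is upgrading this countable cover to a finite one, and this is exactly where zero-closedness must enter. I would argue by contradiction: if $I$ is not polybounded in $X$, then the ideal of subsets of $X$ coverable by finitely many algebraic sets is proper, so its dual filter $\F$, generated by the complements of finite unions of algebraic sets, is a free filter on $X$ (free because singletons are algebraic). Because one-sided translation preimages $a^{-1}A=\{y:ay\in A\}$ and $Aa^{-1}=\{y:ya\in A\}$ of an algebraic set $A$ are again algebraic, $\F$ is automatically closed under $F\mapsto a^{-1}F$ and $F\mapsto Fa^{-1}$; this is precisely what is needed for continuity of the multiplication at the points $(a,0)$ and $(0,a)$ once $0$ is adjoined with neighbourhood base $\{\{0\}\cup F:F\in\F\}$ and $X$ is kept discrete. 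Moreover the set of ultrafilters on $X$ refining $\F$ is a nonempty closed subsemigroup of $\beta X$ (nonemptiness is properness of $\F$, and closure under the operation of $\beta X$ follows from the preimage-closure of $\F$), so by Ellis' theorem it contains an idempotent ultrafilter $p\supseteq\F$; idempotency supplies, for each $F\in p$, a set $F'\in p$ with $F'F'\subseteq F$, i.e. the square-refinement required at $(0,0)$.

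The genuine difficulty I anticipate is that $\beta X$ is only right-topological, so the construction above yields merely separate, not joint, continuity; obtaining a \emph{Hausdorff semigroup topology} in the sense required by zero-closedness demands joint continuity of the multiplication on $X^0$. This is the step where I expect the separable subinvariant pseudometric to be indispensable, since subinvariance makes both translations and the product non-expanding (so that $d(xy,x'y')\le d(x,x')+d(y,y')$), allowing the filter $\F$ to be refined, compatibly with $d$ and with the structure of the minimal ideal, to a free filter defining a jointly continuous Hausdorff semigroup topology on $X^0$ with $0\in\overline{X}$. Any such topology contradicts zero-closedness, forcing $I$ to be polybounded in $X$ and hence $X$ to be polybounded.
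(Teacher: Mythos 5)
Your easy direction, the reduction of polyboundedness of $X$ to polyboundedness of $I$ in $X$ via $x\mapsto oxo$, and the observation that the ball $B_k=\{x\in I:d(x,x_k)<\e\}$ around each point of a countable dense set is polybounded with a uniform bound (via $u_kx_kv_k=o$ and subinvariance) are all correct; this last computation is in fact the same trick the paper uses inside its Claim~\ref{cl:step}. But the resulting countable cover of $X$ by algebraic sets does not finish the proof, and the step where you pass from ``not polybounded'' to ``not zero-closed'' --- the entire content of the theorem --- is not actually carried out. You explicitly defer it: you say you ``expect'' the pseudometric to allow the filter $\F$ to be ``refined, compatibly with $d$,'' to one defining a jointly continuous Hausdorff semigroup topology on $X^0$. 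That refinement is precisely the hard part, and no construction is given.

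Moreover, the one concrete mechanism you do propose is flawed: an idempotent ultrafilter $p$ on $X$ does \emph{not} supply, for each $F\in p$, a set $F'\in p$ with $F'F'\subseteq F$. Idempotency gives $\{x:x^{-1}F\in p\}\in p$, i.e.\ the Galvin--Glazer/Hindman IP-set structure, where the shrinking set depends on the point $x$; a single multiplicatively square-shrinking $F'$ is a much stronger demand and fails in general, which is exactly why $\beta X$ is only right-topological. The paper avoids $\beta X$ entirely. Assuming $X$ is not polybounded, it uses Claim~\ref{cl:step} to choose inductively points $x_n\in I$ and finite sets $C_n$ (accumulating all values $f_{a,s}(x_i)$ of polynomials with coefficients in $C_{<n}$) so that $x_n$ avoids $f_{a,s}^{-1}(\Ball_d(c,1))$ for all such polynomials and $c\in C_n$. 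The neighborhoods of $0$ are then the explicit sets $U(\e)$ of values $f_{a,s}(x_m)$ where the coefficient tuple $a$ lies in a product of $\delta(i)$-balls around $C_m$ with $\sum_i\delta(i)<\e$ and the degree $k$ satisfies $2\le k<\e m$. The budget $\sum_i\delta(i)<\e$ together with the constraint $k<\e m$ is what makes $U(\e)U(\e)\subseteq U(2\e)$ (joint continuity at $(0,0)$), $bU(\e')\cup U(\e')b\subseteq U(\e)$ (continuity at $(b,0)$ and $(0,b)$), and the Hausdorff separation $b\notin U(\e)$ all go through simultaneously; the subinvariance of $d$ is used to transfer the perturbation of coefficients to a perturbation of the value. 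None of this quantitative bookkeeping, which is where the pseudometric genuinely enters, appears in your proposal, so the argument as written does not prove the theorem.
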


Let us recall that a {\em pseudometric} on a set $X$ is any function $d:X\times X\to\IR$ such that $d(x,x)=0$, $d(x,y)=d(y,x)$ and $d(x,z)\le d(x,y)+d(y,z)$ for all $x,y,z\in X$. A pseudometric $d$ on $X$ is {\em separable} if there exists a countable set $C\subseteq X$ such that for every $x\in X$ and $\e>0$ there exists $c\in C$ such that $d(x,c)<\e$. 

A pseudometric $d$ on a semigroup $X$ is {\em subinvariant} if $d(axb,ayb)\le d(x,y)$ for any $x,y\in X$ and $a,b\in X^1$.


Theorems~\ref{t:poly}, \ref{t:local} will be applied in the proof of the following theorem holding under the set-theoretic assumption $\cov(\M)=\mathfrak c$, where $\cov(\M)$ is the smallest cardinality of a cover of the real line by meager sets. The Baire Theorem guarantees that the cardinal $\cov(\M)$ is uncountable. Under Martin's axiom, $\cov(\M)$ is equal to $\mathfrak c$, the cardinality of the continuum. By Theorem~7.13 in \cite{Blass}, the equality $\cov(\M)=\mathfrak c$ is equivalent to Martin's Axiom for countable posets.

\begin{theorem}\label{t:MA} Under $\cov(\M)=\mathfrak c$, a zero-closed semigroup $X$ is polybounded if one of the following conditions is satisfied:
\begin{enumerate}
\item $X$ has a separable complete subinvariant metric;
\item $X$ admits a compact Hausdorff semigroup topology.
\end{enumerate}
\end{theorem}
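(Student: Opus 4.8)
The plan is to verify, in both cases, the sufficient condition of Theorem~\ref{t:local}: I will produce a separable subinvariant pseudometric $d$ together with a point $o\in I\defeq\bigcap_{a\in X}XaX$ (nonempty by Proposition~\ref{p:ideal}) and an $\e>0$ for which the ball $\{x\in I:d(x,o)<\e\}$ is polybounded in $X$. The common engine is that, under subinvariance the inequality $d(xy,x'y')\le d(x,x')+d(y,y')$ makes multiplication continuous, and in the compact case the multiplication is continuous by hypothesis; hence every semigroup polynomial $f(x)=a_0xa_1\cdots xa_n$ is continuous and every algebraic set $f^{-1}(b)$ is closed. Consequently a cover of a Baire space by fewer than $\cov(\M)$ algebraic sets must contain one with nonempty interior, and once a ball of $I$ is trapped inside a single algebraic set it is polybounded (covered by one algebraic set), so Theorem~\ref{t:local} applies.

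For case (1), separability gives $|X|\le\mathfrak c$, so Theorem~\ref{t:poly} yields a cover of $X$ by a family $\C\subseteq\A_X$ with $|\C|=\cov(\A_X)<|X|\le\mathfrak c=\cov(\M)$ (and $\cov(\A_X)<\w$ already if $X$ is countable). The closure $\overline I$ is closed in the Polish space $(X,d)$, hence Polish and Baire, and $I$ is dense in it. If $\overline I$ has an isolated point $o$, then $o\in I$ and the singleton $\{o\}$, being a ball of $I$ and equal to the algebraic set $\{x:x=o\}$, finishes the proof. Otherwise $\overline I$ is perfect, so $\cov(\M(\overline I))=\cov(\M)=\mathfrak c>|\C|$; since the closed sets $\{A\cap\overline I:A\in\C\}$ cover $\overline I$, one of them, $A_0\cap\overline I$ with $A_0\in\C$, is non-meager and hence has nonempty interior in $\overline I$. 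Choosing a nonempty relatively open $V\subseteq A_0\cap\overline I$, then $c\in V\cap I$ by density, and $\e>0$ with $\{x\in\overline I:d(x,c)<\e\}\subseteq V$, I obtain the ball $\{x\in I:d(x,c)<\e\}\subseteq A_0$ inside a single algebraic set, as required.

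For case (2), continuity of the compact multiplication makes $I=\bigcap_a XaX$ closed (each $XaX$ is a continuous image of the compact $X\times X$), hence compact, and $I$ is completely simple with compact maximal subgroups. Any continuous subinvariant pseudometric $d$ on $X$ is automatically separable, since its pseudometric quotient is a continuous image of $X$ and thus a compact metric space. The finiteness of the cover will come from two ingredients special to the compact setting: the two-sided translations $x\mapsto sxt$ act transitively on the completely simple $I$, so that if a single algebraic set $A$ meets $I$ in a set with nonempty interior $U$, the open sets $\{x\in I:sxt\in U\}$ cover $I$; by compactness of $I$ finitely many suffice, and the corresponding sets $\{x:sxt\in A\}=\{x:f(sxt)=b\}$ are algebraic (absorbing $s,t$ into the coefficients). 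This already exhibits $I$, and a fortiori every ball of $I$, as polybounded in $X$.

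The main obstacle is the first step of case (2): producing one genuine algebraic set whose trace on $I$ has nonempty interior. When $I$ is metrizable this is exactly the Baire argument of case (1) applied to the compact (hence Polish) space $I$, using $\cov(\M(I))=\mathfrak c$ and a cover by fewer than $\mathfrak c$ algebraic sets. The difficulty is the non-metrizable case, where neither the cardinality of the cover from Theorem~\ref{t:poly} nor the meager-covering number of $I$ is controlled, and where interior obtained in a metrizable quotient $I\to\tilde I$ only pulls back to a $d$-zero set $\{x:d(f(x),b)=0\}$ rather than to a genuine algebraic set $f^{-1}(b)$. I expect to resolve this by descending to the compact maximal subgroup $G=eIe$ and its finite-dimensional (Peter--Weyl) metrizable quotients, and using the Rees coordinatization $I\cong\M(G;A,B;P)$ together with the compactness-driven finite-subcover step to upgrade interior in a quotient into a genuine algebraic set meeting $I$ in a set with interior; equivalently, I anticipate showing that a zero-closed compact semigroup has metrizable minimal ideal, which reduces case (2) to case (1). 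This upgrade is the crux of the argument.
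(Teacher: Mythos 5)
Your case (1) is essentially the paper's own argument (Lemma~\ref{l:MA1}): split on whether $\overline I$ has an isolated point, use $\cov(\A_X)<|X|\le\mathfrak c=\cov(\M)$ together with the closedness of algebraic sets to find one whose trace on the perfect Polish space $\overline I$ has nonempty interior, and feed the resulting ball into Theorem~\ref{t:local}. That part is fine.

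Case (2) has a genuine gap, and it is exactly the one you flag yourself: producing a single algebraic set whose trace on $I$ has nonempty interior when $X$ is not metrizable. Your proposed repair --- proving that a zero-closed compact semigroup has metrizable minimal ideal and then ``reducing to case (1)'' --- is not carried out, and even granting it the reduction would not close the gap: the cover supplied by Theorem~\ref{t:poly} has cardinality $<|X|$, and for a non-metrizable compact $X$ the cardinality $|X|$ can exceed $\mathfrak c$, so the Baire/$\cov(\M)$ count on the compact metrizable $I$ does not apply to that cover; moreover Theorem~\ref{t:local} requires a separable subinvariant pseudometric on all of $X$, not merely a metric on $I$. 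The paper avoids $I$ entirely in the compact case and instead goes through an inverse-limit argument: it takes the ($\sigma$-directed) family $D$ of all continuous subinvariant pseudometrics $\alpha$ on $X$, forms the compact metrizable quotient semigroups $X_\alpha$, observes that each $X_\alpha$ is zero-closed as a surjective homomorphic image of a zero-closed semigroup (Lemma~\ref{l:zp}) and hence polybounded by the already-proved case (1), identifies $X$ with $\lim\Sigma$ of the resulting spectrum, and invokes the preservation theorem for $\sigma$-directed inverse limits of polybounded semigroups (Theorem~\ref{t:lim}, applicable since compact spaces are power-Lindel\"of). That preservation theorem is the missing ingredient in your plan; without it, or without an actual proof of your metrizability claim together with a way to control the size of the algebraic cover of $I$ by $\mathfrak c$, case (2) does not go through.
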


Theorem~\ref{t:MA} suggests the following two open problems.

\begin{problem}\label{prob:main} Is every zero-closed Polish group $X$ polybounded (under $\cov(\M)=\mathfrak c$)?
\end{problem}

\begin{problem}\label{prob:comp} Is every zero-closed compact Hausdorff topological group polybounded in ZFC?
\end{problem}

A weaker property compared to the polyboundedness is the polyfiniteness.

A semigroup $X$ is defined to be {\em polyfinite} if there exist $n\in\IN$ and a finite set $F\subseteq X$ such that for any elements $x,y\in X$ there exists a semigroup polynomial $f:X\to X$ of degree $\le n$ such that $\{f(x),f(y)\}\subseteq F$.

In Lemma~\ref{l:pb=>pf} we shall prove that each polybounded semigroup is polyfinite. Polyfinite semigroups were introduced and used in \cite{ACS}. An example of a polyfinite group which is not polybounded is presented in Example~\ref{ex:AR}.

Let $\cov(\overline{\mathcal N})$ be the smallest cardinality of the cover of the real line by closed subsets of Lebesgue measure zero. By \cite[4.1]{BS},
$$\max\{\cov(\M),\cov(\mathcal N)\}\le\cov(\overline{\mathcal N})\le\max\{\cov(\mathcal N),\mathfrak d\}\le\mathfrak c,$$where $\cov(\mathcal N)$ is the smallest cardinality of a cover of the real line by subsets of Lebesgue measure zero and $\mathfrak d$ is the smallest cardinality of a cover of the space $\mathbb R\setminus\mathbb Q$ by compact subsets. By \cite[5.6]{BS}, the strict inequality $\max\{\cov(\M),\cov(\mathcal N)\}<\cov(\overline{\mathcal N)}$ holds in some models of  ZFC.

\begin{theorem}\label{t:pf} Under $\cov(\overline{\mathcal N})=\mathfrak c$, each zero-closed compact Hausdorff topological semigroup is polyfinite.
\end{theorem}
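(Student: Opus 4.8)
The plan is to prove the contrapositive: assuming $X$ is not polyfinite, I would build a Hausdorff semigroup topology on $X^0$ in which $0$ lies in the closure of $X$, contradicting zero-closedness (the same mechanism as in Proposition~\ref{p:ideal}, but the topology will be far finer than the off-$0$-discrete one used there). First I would simplify the \emph{conclusion}. By Proposition~\ref{p:ideal} (or by compactness) the minimal ideal $I=\bigcap_{a\in X}XaX$ is nonempty; fix an idempotent $e\in I$ and let $G=eXe$, the maximal subgroup at $e$, a compact Hausdorff topological group. The map $x\mapsto exe$ is a semigroup polynomial of degree $1$ sending $X$ onto $G$, and substituting it into any semigroup polynomial $g$ on $G$ yields a semigroup polynomial on $X$ of the same degree and the same value-set (e.g. $g(z)=b_0zb_1\cdots zb_m$ becomes $x\mapsto (b_0e)x(eb_1e)x\cdots x(eb_m)$). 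Hence $X$ is polyfinite as soon as $G$ is, so it suffices to analyze the compact group $G$. This is where the hypotheses genuinely bite: an infinite compact \emph{abelian} group is never polyfinite (there every polynomial has the form $x\mapsto cx^n$, and for a fixed bound on $n$ only finitely many ratios $xy^{-1}$ can be pushed into a fixed finite set), so zero-closedness of $X$ must be used to exclude such behavior in $G$.

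The second step transports the set-theoretic hypothesis into $G$. I equip $G$ with its Haar probability measure $\mu$ and reduce to the case that $G$ is metrizable by passing to a quotient $G/N$ by a closed normal subgroup with $G/N$ second countable (a polynomial identity separating a pair in $G$ separates it in some such quotient). On a nonatomic compact metric group the $\sigma$-ideal of closed Haar-null sets has covering number exactly $\cov(\overline{\mathcal N})$, so under $\cov(\overline{\mathcal N})=\mathfrak c$ fewer than $\mathfrak c$ closed null subsets cannot cover $G$. For each $n\in\IN$, each finite $F\subseteq G$, and each semigroup polynomial $f$ on $G$ of degree $\le n$, the set $f^{-1}(F)=\{x\in G:f(x)\in F\}$ is closed, by continuity of the word map and finiteness of $F$. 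The key lemma I would isolate is a dichotomy: either some fixed $n$ and finite $F$ make the pair-sets $D_{n,F}=\{(x,y):\exists f,\ \deg f\le n,\ f(x),f(y)\in F\}$ exhaust $G\times G$ -- so $G$, hence $X$, is polyfinite -- or, for every $n$ and $F$, all the fibers $f^{-1}(F)$ above are Haar-null, i.e. closed null sets.

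In the second alternative I would run a transfinite recursion of length $\mathfrak c$. Enumerating the points of $X$ to be separated from the prospective limit together with the multiplicative (continuity) constraints, at each stage I select an element of $G$ lying outside all the closed null sets accumulated so far; this is possible precisely because fewer than $\mathfrak c$ closed null sets do not cover $G$. From the resulting $\mathfrak c$-sequence I generate a finer, necessarily non-compact, Hausdorff semigroup topology on $X^0$ in which the chosen elements cluster at $0$ while each point of $X$ stays separated from $0$ (the nullity of the avoided fibers is exactly what keeps these elements away from every value $b$ of every low-degree polynomial, which is what a neighborhood basis at $0$ must respect for the multiplication $X^0\times X^0\to X^0$ to be continuous). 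Such a topology contradicts zero-closedness, so the first alternative holds and $X$ is polyfinite.

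The main obstacle is the dichotomy of the second step. Turning positive Haar measure of a bounded-degree fiber into an actual finite cover of $G\times G$ is delicate, because word maps on a compact group are in general neither open nor measure-preserving and may have geometrically complicated fibers; the natural tool is a Steinhaus-type argument controlling the pushforward of $\mu$ and the difference sets of positive-measure fibers, but making it uniform over all coefficient choices of a given degree is the crux. Equally delicate is the third step: arranging that the $\mathfrak c$ chosen elements simultaneously obey the continuity requirements of a semigroup topology (so that products $axb$ of a net converging to $0$ again converge to $0$) while remaining Hausdorff-separated from every point of $X$, all inside the complement of the null fibers. Finally, the reduction to a metrizable quotient must be checked to preserve both non-polyfiniteness and the ability to reflect the constructed topology back to $X^0$; this is routine at the level of the covering number but requires care when $G$ fails to be second countable.
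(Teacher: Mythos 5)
Your skeleton (pass to the maximal group $H=eXe$ at an idempotent, bring in Haar measure, invoke a Steinhaus-type argument, handle non-metrizability by quotients) points in the right direction, but the two steps you yourself flag as ``the crux'' and ``equally delicate'' are precisely the missing content, and the route you choose makes them harder than they need to be. There is no need for your dichotomy or for a fresh transfinite topology construction: the paper simply applies Theorem~\ref{t:poly} (already proved, and where all the topology-building is already done) together with Lemma~\ref{l:cov-eXe} to get $\cov(\A_H)=\cov(\A_X)<\max\{2,|X|\}\le\mathfrak c=\cov(\overline{\mathcal N}_H)$; since algebraic sets are closed in $H$ by continuity of the word maps, some algebraic set $A$ must have positive Haar measure, and Steinhaus--Weil makes $AA^{-1}$ a neighborhood of the identity. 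Your second alternative (``all bounded-degree fibers are null, so build a non-Hausdorff-separating net converging to $0$'') would then never need to be entered; and as described it is not executable anyway, because choosing points outside accumulated null sets says nothing about the closure of the prospective neighborhoods of $0$ under multiplication and two-sided translation, which is the entire difficulty in such constructions (compare Claims~\ref{cl2}--\ref{cl4} and Claim~\ref{cl5}).

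The genuinely nontrivial step you are missing is how to convert ``$AA^{-1}$ is a neighborhood of $1$'' into polyfiniteness. This is not a uniformity-over-coefficients issue (one positive-measure fiber suffices); it is the combinatorial content of Lemmas~\ref{l:power} and \ref{l:Stein}: cover $H=UF$ with $F$ finite and $U^{-1}UUU^{-1}\subseteq AA^{-1}$, use the Pigeonhole Principle on the pairs $(x^n,y^n)$, $n\le|F|^2+1$, to find a single exponent $k\le|F|^2$ with $x^k,y^k\in UU^{-1}$ simultaneously, and then use a conjugation trick to turn $x^{-k}y^k\in AA^{-1}$ into one semigroup polynomial of bounded degree sending both $x$ and $y$ into a fixed finite set. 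Two further gaps: your reduction to metrizable quotients is needed in the hard direction (polyfiniteness of all metrizable quotients must be pulled back to $X$), which is the paper's Theorem~\ref{t:lim-polyfinite} on $\sigma$-directed inverse spectra and requires an ultrafilter/compactness argument, not a one-line reflection; and your motivating claim that an infinite compact abelian group is never polyfinite is false --- on $(\mathbb Z/2\mathbb Z)^{\w}$ the polynomial $x\mapsto x^2$ is constant, so that group is polyfinite with $F$ a singleton.
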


Theorem~\ref{t:pf} suggests the following weaker version of Problem~\ref{prob:comp}.

\begin{problem} Is every zero-closed compact Hausdorff topological group polyfinite?
\end{problem}

A topological semigroup $X$ is called {\em $0$-discrete} if $X$ is has a unique non-isolated point $0\in X$ such that $0x=0=x0$ for all $x\in X$.

Let $\C$ be a class of topological semigroups. A semigroup $X$ is called
\begin{itemize}
\item {\em absolutely $\C$-closed} if for any homomorphism $h:X\to Y$ to a topological semigroup $Y\in\C$ the image $h[X]$ is closed in $Y$.
\item {\em injectively $\C$-closed} if for any injective homomorphism $h:X\to Y$ to a topological semigroup $Y\in\C$ the image $h[X]$ is closed in $Y$.
\end{itemize}

Absolutely and injectively $\C$-closed semigroups for various classes $\C$ are studied in \cite{CCUS}, \cite{ICT1S}, \cite{ICT2S}, \cite{ACS}.

Applying Theorems~\ref{t:poly} and \ref{t:MA}, we shall prove the following corollaries that will be used in \cite{ACS}.

\begin{corollary}\label{c:IC-M} Let $\C$ be a class of topological semigroups, containing all $0$-discrete Hausdorff topological semigroups and all Polish topological semigroups. Let $X$ be an injectively $\C$-closed semigroup admitting a separable subinvariant metric. If $\cov(\M)=\mathfrak c$, then $X$ is polybounded.
\end{corollary}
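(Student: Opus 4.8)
The plan is to deduce the corollary from Theorem~\ref{t:MA}(1), whose two nontrivial hypotheses are that $X$ is zero-closed and that its separable subinvariant metric $d$ is in fact \emph{complete}. I would extract both of these from injective $\C$-closedness, exploiting the two subclasses that $\C$ is assumed to contain: the $0$-discrete Hausdorff semigroups will yield zero-closedness, and the Polish semigroups will yield completeness of $d$.

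First I would verify that $X$ is zero-closed. Suppose not; then there is a Hausdorff semigroup topology $\tau$ on $X^0$ with $0\in\overline{X}^{\tau}\setminus X$. Define a finer topology $\sigma$ on $X^0$ by declaring every point of $X$ isolated while keeping the $\tau$-neighborhood filter at $0$; concretely, let $\sigma$ be generated by the subbasis $\{\{x\}:x\in X\}\cup\tau$. A routine check shows that the $\sigma$-neighborhoods of $0$ coincide with its $\tau$-neighborhoods, that $\sigma$ is Hausdorff, and that the only non-isolated point of $(X^0,\sigma)$ is $0$, so $(X^0,\sigma)$ is $0$-discrete and $X$ is still not $\sigma$-closed. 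Using that $0$ is a zero and that multiplication is already $\tau$-continuous, one verifies that multiplication on $X^0$ remains $\sigma$-continuous. Thus $(X^0,\sigma)\in\C$, and the inclusion $X\hookrightarrow(X^0,\sigma)$ is an injective homomorphism whose image $X$ is not closed (since $0\in\overline{X}^{\sigma}$), contradicting injective $\C$-closedness. Hence $X$ is zero-closed.

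Next I would upgrade $d$ to a complete metric. Subinvariance gives $d(xy,x'y')\le d(x,x')+d(y,y')$, so multiplication is Lipschitz and extends continuously to the metric completion $\overline{X}$; by density and continuity the extension is associative and the metric stays subinvariant, so $\overline{X}$ is a topological semigroup carrying a complete separable metric, i.e.\ a Polish semigroup, whence $\overline{X}\in\C$. Since $d$ is a genuine metric, the inclusion $X\hookrightarrow\overline{X}$ is an injective homomorphism, so injective $\C$-closedness forces $X$ to be closed in $\overline{X}$; but $X$ is dense in its completion, so $X=\overline{X}$ and $d$ is already complete. Now $X$ is a zero-closed semigroup with a separable complete subinvariant metric, and since $\cov(\M)=\mathfrak c$, Theorem~\ref{t:MA}(1) yields that $X$ is polybounded.

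The only genuinely delicate point is the continuity of multiplication for the topology $\sigma$: one must confirm that refining $\tau$ to isolate the points of $X$ does not destroy continuity of the product at pairs involving $0$, which works precisely because $0$ annihilates everything and the $\sigma$-neighborhoods of $0$ agree with the $\tau$-neighborhoods of $0$. Everything else — the Lipschitz estimate, the passage to the completion, and the final invocation of Theorem~\ref{t:MA} — is routine.
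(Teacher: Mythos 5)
Your proposal is correct and follows essentially the same route as the paper: zero-closedness is extracted via the $0$-discrete refinement $(X^0,\sigma)$ exactly as in the paper's Lemma~\ref{l1}, and completeness of $d$ is forced by embedding $X$ into the Polish semigroup obtained from the metric completion, exactly as in the paper's proof. The only cosmetic difference is that the paper proves a slightly more general lemma under the hypothesis $|X|\le\cov(\M)$ and therefore splits into the cases $|X|=\mathfrak c$ and $|X|<\mathfrak c$, whereas you work directly under $\cov(\M)=\mathfrak c$ and invoke Theorem~\ref{t:MA}(1) in one step, which is legitimate.
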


\begin{corollary}\label{c:IC-C} Let $\C$ be a class of topological semigroups, containing all $0$-discrete Hausdorff topological semigroups, and all compact Hausdorff topological semigroups. Let $X$ be an injectively $\C$-closed subsemigroup of a compact Hausdorff topological semigroup.
\begin{enumerate}
\item If $\cov(\M)=\mathfrak c$, then $X$ is polybounded.
\item If $\cov(\overline{\mathcal N})=\mathfrak c$, then $X$ is polyfinite.
\end{enumerate}
\end{corollary}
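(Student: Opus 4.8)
The plan is to deduce both items from Theorems~\ref{t:MA}(2) and~\ref{t:pf} by verifying that the injectively $\C$-closed semigroup $X$ meets their hypotheses, namely that (a) $X$ is zero-closed and (b) $X$ itself is a compact Hausdorff topological semigroup. Granting (a) and (b), item~(1) follows from Theorem~\ref{t:MA}(2) and item~(2) from Theorem~\ref{t:pf}, so the entire content of the corollary lies in extracting (a) and (b) from injective $\C$-closedness.

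For (b), I would pass to the closure $\overline X$ of $X$ in the ambient compact Hausdorff topological semigroup $K$. Joint continuity of the multiplication gives $\overline X\cdot\overline X\subseteq\overline{X\cdot X}\subseteq\overline X$, so $\overline X$ is a subsemigroup; being closed in the compact Hausdorff space $K$, it is itself a compact Hausdorff topological semigroup and hence belongs to $\C$. The inclusion $X\hookrightarrow\overline X$ is an injective homomorphism into a member of $\C$, so injective $\C$-closedness forces the image $X$ to be closed in $\overline X$. Since $X$ is dense in $\overline X$ by construction, this yields $X=\overline X$, whence $X$ is compact and (b) holds.

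For (a), the idea is to exploit that $\C$ contains all $0$-discrete Hausdorff topological semigroups. Fix an arbitrary Hausdorff semigroup topology $\tau$ on $X^0$; I must show that $X$ is $\tau$-closed, equivalently that $0$ is $\tau$-isolated. Suppose not. Then I would refine $\tau$ to the topology $\tau'$ on $X^0$ generated by $\tau$ together with all singletons $\{x\}$, $x\in X$; concretely, a set is $\tau'$-open if and only if it is either a subset of $X$ or a $\tau$-neighbourhood of $0$. A direct check using joint $\tau$-continuity of the multiplication shows that $\tau'$ is again a Hausdorff semigroup topology, that every point of $X$ is $\tau'$-isolated, and that $0$ remains non-isolated (since $\{0\}\notin\tau$), so $(X^0,\tau')$ is $0$-discrete and thus lies in $\C$. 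Because every $\tau'$-neighbourhood of $0$ is a $\tau$-neighbourhood and hence meets $X$, the point $0$ lies in the $\tau'$-closure of $X$, so $X$ is not $\tau'$-closed. This contradicts injective $\C$-closedness applied to the inclusion $X\hookrightarrow(X^0,\tau')$, proving that $0$ is $\tau$-isolated and establishing (a).

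I expect the main obstacle to be the verification in step~(a) that the refined topology $\tau'$ is a genuine Hausdorff semigroup topology: continuity of the multiplication at pairs involving $0$ must be treated case by case ($(0,0)$, $(0,x)$ and $(x,0)$) and derived from the corresponding $\tau$-continuity, the isolated points of $X$ handling the remaining pairs automatically. Everything else is soft: step~(b) is a standard closure argument, and once (a) and (b) are in hand the quoted theorems finish both items. It is worth noting that this reduction, and in particular the implication ``injective $\C$-closedness $\Rightarrow$ zero-closedness'' used in step~(a), is exactly the link between categorical closedness and zero-closedness exploited throughout \cite{ACS}.
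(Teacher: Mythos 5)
Your proposal is correct and follows essentially the same route as the paper: establish that $X$ is compact (by applying injective $\C$-closedness to the inclusion of $X$ into a compact member of $\C$) and zero-closed (by refining a hypothetical Hausdorff semigroup topology on $X^0$ to a $0$-discrete one), then invoke Theorems~\ref{t:MA}(2) and \ref{t:pf}. The only cosmetic differences are that you pass through the closure $\overline X$ where the paper applies injective $\C$-closedness directly to $X\subseteq K\in\C$, and that your step~(a) re-derives what the paper isolates as Lemma~\ref{l1}.
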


A topological space $X$ is called
\begin{itemize}
\item {\em Lindel\"of} if every open cover of $X$ has a countable subcover;
\item {\em power-Lindel\"of} if every finite power $X^n$ of $X$ is Lindel\"of.
\end{itemize}
A topology $\tau$ on a set $X$ is called {\em power-Lindel\"of\/} if so is the topological space $(X,\tau)$.
For example, any Polish or compact topology is power-Lindel\"of. 
A topological space is {\em Tychonoff\/} if it is homeomorphic to a subspace of some Tychonoff cube $[0,1]^\kappa$.

A topology $\tau$ on a semigroup $X$ is called  {\em subinvariant} if for any distinct points $x,y\in X$ there exists a $\tau$-continuous subinvariant pseudometric $d$ on $X$ such that $d(x,y)>0$.

\begin{corollary}\label{c:AC} Assume that $\cov(\M)=\mathfrak c$. Let $\C$ be a class of topological semigroups, containing all Tychonoff topological semigroups. An absolutely $\C$-closed semigroup $X$ is polybounded if $X$ admits a power-Lindel\"of subinvariant topology.
\end{corollary}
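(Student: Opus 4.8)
The plan is to reduce Corollary~\ref{c:AC} to Theorem~\ref{t:MA}(1), applied not to $X$ itself (which need not be metrizable) but to its separable metric quotients, and then to assemble the conclusion through the local criterion of Theorem~\ref{t:local}. First I would normalize the topology: replacing the given power-Lindel\"of subinvariant topology by the coarser topology $\tau$ generated by all continuous subinvariant pseudometrics, I note that $\tau$ still separates points (hence is Hausdorff), is Tychonoff (being gauge), and is a semigroup topology, since subinvariance yields $d(xy,x'y')\le d(x,x')+d(y,y')$; as a coarsening of a power-Lindel\"of topology it remains power-Lindel\"of, so I may assume $\tau$ is this gauge topology.

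Next I would establish the bridge that an absolutely $\C$-closed semigroup, with $\C$ containing all Tychonoff topological semigroups, is zero-closed. Given a Hausdorff semigroup topology $\sigma$ on $X^0$ in which $X$ is not closed, I topologize $X^0$ by the finer topology $\sigma'$ in which each point of $X$ is isolated and the neighborhoods of $0$ are the sets $\{0\}\cup(U\cap X)$ with $U$ a $\sigma$-open neighborhood of $0$. A direct check shows $\sigma'$ is a Hausdorff semigroup topology with $0\in\overline X^{\sigma'}$, and that $(X^0,\sigma')$ is Tychonoff (the function equal to $0$ on $\{0\}\cup(U\cap X)$ and to $1$ elsewhere witnesses complete regularity). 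Then the inclusion $X\hookrightarrow(X^0,\sigma')$ is a homomorphism into a member of $\C$ with non-closed image, contradicting absolute $\C$-closedness. Hence $X$ is zero-closed and, by Proposition~\ref{p:ideal}, has a minimal ideal $I=\bigcap_{a\in X}XaX$. Since any homomorphism out of a quotient composes with the quotient map, every homomorphic image of $X$ is again absolutely $\C$-closed, so by the same bridge every quotient of $X$ is zero-closed.

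For each continuous subinvariant pseudometric $d$ I then analyze the metric quotient $X_d=X/\{d=0\}$, with quotient homomorphism $q_d$ and the induced subinvariant metric. Lindel\"ofness of $(X,\tau)$ makes $X_d$ separable. Applying absolute $\C$-closedness of $X$ to the homomorphism $X\to\overline{X_d}$ into the (Polish, hence Tychonoff) completion forces $q_d[X]=X_d$ to be closed in $\overline{X_d}$, i.e. $X_d$ is already complete. Thus each $X_d$ is a zero-closed Polish semigroup with a separable complete subinvariant metric, and Theorem~\ref{t:MA}(1), under $\cov(\M)=\mathfrak c$, yields that every $X_d$ is polybounded.

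Finally I would verify the criterion of Theorem~\ref{t:local}. Fixing $o\in I$ and a separable subinvariant pseudometric $d$, I try to show a ball $\{x\in I:d(x,o)<\e\}$ is polybounded in $X$ by pulling back a finite algebraic cover $X_d=\bigcup_{i=1}^m g_i^{-1}(c_i)$: lifting the coefficients of each $g_i$ to $X^1$ gives polynomials $\tilde g_i$ on $X$ with $q_d\circ\tilde g_i=g_i\circ q_d$, so the ball lies in $\bigcup_i\{x:\tilde g_i(x)\in q_d^{-1}(c_i)\}$. \textbf{The main obstacle} is that $q_d^{-1}(c_i)$ is a whole $q_d$-fibre, so these are $d$-null level sets of polynomials rather than genuine algebraic subsets of $X$; to make them algebraic I need $d$ to separate the points in the finitely many ranges $\tilde g_i[\{x\in I:d(x,o)<\e\}]\subseteq I$, so that $q_d(\tilde g_i(x))=c_i$ forces $\tilde g_i(x)=\tilde c_i$ for a unique $\tilde c_i\in I$. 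This is exactly where power-Lindel\"ofness enters: each such range is a continuous image of a finite power of $X$, hence Lindel\"of, and I would use this together with the structure of the minimal ideal to replace the separating family of pseudometrics by a countable subfamily, repackaged as a single separable subinvariant $d$ injective on these ranges. Granting this, the pulled-back cover becomes a finite algebraic cover of the ball, Theorem~\ref{t:local} applies, and $X$ is polybounded. I expect this separability-and-injectivity reduction, turning the metric cover of $X_d$ into an honest algebraic cover in $X$, to be the crux of the argument.
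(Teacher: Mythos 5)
Your reduction to the metric quotients is essentially the paper's: you pass to the gauge topology of continuous subinvariant pseudometrics, observe that each quotient $X_d$ is separable (Lindel\"ofness), complete (absolute $\C$-closedness applied to the embedding into the completion $\overline{X_d}$, a Polish hence Tychonoff semigroup), and zero-closed, and conclude from Theorem~\ref{t:MA}(1) that every $X_d$ is polybounded. Your ``bridge'' to zero-closedness via the $0$-discrete refinement is the paper's Lemma~\ref{l1} together with the remark that $0$-discrete Hausdorff semigroups are Tychonoff. Up to this point the two arguments coincide.

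The final step --- assembling polyboundedness of $X$ from polyboundedness of all the $X_d$ --- is where your argument has a genuine gap, and you flag it yourself. Your proposed fix is circular: the finite cover $X_d=\bigcup_i g_i^{-1}(c_i)$, hence the lifted polynomials $\tilde g_i$ and the ranges $\tilde g_i[\{x\in I:d(x,o)<\e\}]$ on which you need $d$ to be injective, all depend on $d$ itself, so ``repackaging the separating family into a single $d$ injective on these ranges'' is not a well-posed selection (each new candidate pseudometric changes the cover and hence the ranges). Moreover, power-Lindel\"ofness does not provide a single separable pseudometric injective on a prescribed Lindel\"of subset: that would make the subset separable submetrizable, which does not follow from Lindel\"ofness plus a point-separating (possibly uncountable) family of pseudometrics. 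The paper sidesteps the fibre problem entirely: it organizes the quotients $X_\alpha$ into a $\sigma$-directed inverse spectrum $\Sigma$, uses absolute $\C$-closedness once more to show $\pi:X\to\lim\Sigma$ is an isomorphism onto the limit, and then applies Theorem~\ref{t:lim}, whose proof uses Lindel\"ofness of the finite power $X^\ell$ (where $\ell$ bounds the total number of coefficients) to extract a cluster point of the coefficient tuples of the covers of the $X_\alpha$ along a cofinal set of indices; this yields a finite algebraic cover of $X$ directly, and no pseudometric ever needs to be injective anywhere. Without Theorem~\ref{t:lim} or an equivalent clustering device, your argument does not close.
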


Theorems~\ref{t:poly}, \ref{t:local}, \ref{t:MA}, \ref{t:pf} and Corollaries \ref{c:IC-M}, \ref{c:IC-C}, \ref{c:AC} will be proved in Sections~\ref{s:poly}, \ref{s:local}, \ref{s:MA},  \ref{s:pf}, \ref{s:IC-M}, \ref{s:IC-C}, \ref{s:AC}, respectively. In Sections~\ref{s:lim} and \ref{s:polyfinite} we prove Theorems~\ref{t:lim} and \ref{t:lim-polyfinite} on the preservation of polyboundedness and polyfiniteness by limits of inverse spectra. These theorems will be applied in the proofs of Theorems~\ref{t:MA}(2), \ref{t:pf} and Corollary~\ref{c:AC}.

\section{Proof of Theorem~\ref{t:local}}\label{s:local}

The ``only if'' part of Theorem~\ref{t:local} is trivial (just take the zero pseudometric on $X$). To prove the ``if'' part, assume that $X$ is a zero-closed semigroup and $d:X\times X\to X$ is a separable subinvariant pseudometric on $X$ such that for some point $o\in I\defeq\bigcap_{a\in X}XaX$ and $\epsilon>0$ the set $\{x\in I:d(x,o)<\epsilon\}$ is polybounded in $X$.
Replacing the pseudometric $d$ by $\frac1{\epsilon} d$, we can assume that $\epsilon=1$ and the set $\{x\in I:d(x,o)<1\}$ is polybounded in $X$.

For a point $x\in X$ and positive real number $\e$ we denote by $\Ball_d(x,\e)\defeq\{y\in X:d(x,y)<\e\}$ the $\e$-ball centered at $x$. For a subset $A\subseteq X$ let $\Ball_d(A,\e)\defeq\bigcup_{a\in A}\Ball_d(a,\e)$ be the $\e$-neighborhood of $A$ in the pseudometric space $(X,d)$.

To derive a contradiction, assume that the semigroup $X$ is not polybounded.

For every $n\in\IN$ and $\e>0$ let
$$\textstyle\{0,1\}^n_+\defeq \{s\in\{0,1\}^n:\sum_{i\in n}s(i)>0\}=\{0,1\}^n\setminus\{0\}^n.$$

For a nonzero number $n\in\IN$ and finite sequences $a=(a_0,\dots,a_{n})\in X^{n+1}$ and $s=(s_0,\dots,s_{n-1})\in\{0,1\}^{n}_+$ let $f_{a,s}:X\to X$ be the semigroup polynomial defined by $$f_{a,s}(x)=a_0x^{s_0}a_1x^{s_1}\dots a_{n-1}x^{s_{n-1}a_n}\quad\mbox{for $x\in X$,}
$$ where $x^1=x$ and $x^0=1\in X^1$ for every $x\in X$.

\begin{claim}\label{cl:step} For any finite sets $A\subseteq \bigcup_{n\in\IN}(X^{n+1}\times\{0,1\}^n)$ and $B\subseteq I$ there exists $x\in I$ such that $f_{a,s}(x)\notin \Ball_d(b,1)$ for every $(a,s)\in A$ and $b\in B$.
\end{claim}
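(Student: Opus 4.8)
The plan is to argue by contradiction against the standing assumption that $X$ is not polybounded. So I would suppose that no $x\in I$ works, i.e.\ that for every $x\in I$ there are $(a,s)\in A$ and $b\in B$ with $f_{a,s}(x)\in\Ball_d(b,1)$. Since $A$ and $B$ are finite, this says
$$I=\bigcup_{(a,s)\in A,\ b\in B}\{x\in I:d(f_{a,s}(x),b)<1\}.$$
As a finite union of polybounded sets is polybounded, it then suffices to show that each set $\{x\in I:d(f_{a,s}(x),b)<1\}$ is polybounded in $X$; this would make $I$ polybounded in $X$.

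The key step is to transport the arbitrary target ball $\Ball_d(b,1)$ onto the single controlled ball $\{z\in I:d(z,o)<1\}$, using subinvariance together with the minimal-ideal structure. Since $o\in I=\bigcap_{a\in X}XaX\subseteq XbX$, I can write $o=pbq$ for some $p,q\in X$. For $x\in I$ the element $f_{a,s}(x)$ lies in $I$, since it has $x\in I$ as a factor and $I$ is an ideal, and subinvariance yields
$$d\big(p\,f_{a,s}(x)\,q,\ o\big)=d\big(p\,f_{a,s}(x)\,q,\ pbq\big)\le d\big(f_{a,s}(x),b\big)<1.$$
Writing $g(x):=p\,f_{a,s}(x)\,q$, which is again a semigroup polynomial having $x$ as a factor, I get $g(x)\in I$ and $d(g(x),o)<1$, so
$$\{x\in I:d(f_{a,s}(x),b)<1\}\subseteq g^{-1}\big[\{z\in I:d(z,o)<1\}\big].$$

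Now I would invoke the hypothesis that $\{z\in I:d(z,o)<1\}$ is polybounded in $X$, say covered by algebraic sets $h_1^{-1}(c_1),\dots,h_k^{-1}(c_k)$. Since semigroup polynomials are closed under composition, each $h_i\circ g$ is a semigroup polynomial, hence
$$\{x\in I:d(f_{a,s}(x),b)<1\}\subseteq\bigcup_{i=1}^k (h_i\circ g)^{-1}(c_i)$$
is covered by finitely many algebraic sets and is therefore polybounded in $X$. Running over the finite index set gives that $I$ is polybounded in $X$. Finally I would pass from $I$ to $X$ via the polynomial $x\mapsto oxo$, which maps $X$ into $I$: if $I\subseteq\bigcup_j g_j^{-1}(d_j)$ then $X=\{x\in X:oxo\in I\}\subseteq\bigcup_j\{x\in X:g_j(oxo)=d_j\}$, again a finite union of algebraic sets, so $X$ is polybounded --- contradicting the standing assumption and proving the claim.

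The main obstacle is the middle step: recognizing that the identity $o=pbq$, available precisely because both $o$ and $b$ lie in the minimal ideal, combined with subinvariance, replaces the uncontrolled ball $\Ball_d(b,1)$ by the one ball whose polyboundedness is given. After that, closure of polynomials under composition and the $oxo$-trick for lifting polyboundedness from $I$ to $X$ are routine.
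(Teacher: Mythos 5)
Your proof is correct and follows essentially the same route as the paper: the key step in both is to write $o=ubv$ (available since $o$ lies in the minimal ideal) and use subinvariance to transport $\Ball_d(b,1)$ into the controlled ball $\Ball_d(o,1)$, then compose semigroup polynomials. The only difference is organizational --- you first cover $I$ and then lift to $X$ via $x\mapsto oxo$, whereas the paper folds both steps into the single composite polynomial $p\circ g_b\circ f_{a,s}\circ g_b$ --- and this changes nothing of substance.
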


\begin{proof} To derive a contradiction, assume that $I\subseteq \bigcup_{(a,s)\in A}\bigcup_{b\in B}f_{a,s}^{-1}(\Ball_d(b,1))$.  Since $I$ is a minimal ideal in $X$, for every $b\in B$ we have $o\in I=IbI$ and hence there exist elements $u_b,v_b\in I$ such that $o=u_bbv_b$. The elements $u_b,v_b$ determine the semigroup polynomial $g_b:X\to X$, $g_b:x\mapsto u_bxv_b$, such that $g_b(b)=o$ and $g_b[X]=u_bXv_b\subseteq IXI\subseteq I$.

By the choice of the point $o$, the set $I\cap\Ball_d(o,1)$ is polybounded in $X$. So, there exist a finite set $Q\subseteq X$ and a finite set $P$ of semigroup polynomials on $X$ such that $I\cap\Ball_d(0,1)\subseteq\bigcup_{p\in P}\bigcup_{q\in Q}p^{-1}(q)$. For every $(a,s)\in A$, $b\in B$ and $p\in P$, consider the semigroup polynomial $h_{a,s,b,p}= p\circ g_b\circ f_{a,s}\circ g_b:X\to X$.
For every $x\in X$ we have $g_b(x)\in I$ and then $f_{a,s}(g_b(x))\in \Ball_d(b,1)$ for some $(a,s)\in A$ and $b\in B$. Taking into account that the pseudometric $d$ is subinvariant, we conclude that for the element $y=f_{a,s}(g_b(x))\in \Ball_d(b,1)$ we have  $$d(g_b(y),o)=d(u_byv_b,u_bbv_b)\le d(y,b)<1$$and hence
$g_b(y)\in p^{-1}(q)$ for some $p\in P$ and $q\in Q$.
Then $h_{a,s,b,p}(x)=p\circ g_b\circ f_{a,s}\circ g_b(x)=p(g_b(y))=q$ and hence
$$x\in \bigcup_{(a,s)\in A}\bigcup_{b\in B}\bigcup_{p\in P}\bigcup_{q\in Q}h_{a,s,b,p}^{-1}(q),$$
witnessing that the semigroup $X$ is polybounded, which contradicts our assumption.
\end{proof}

Since the pseudometric $d$ is separable, there exists a sequence $\{c_n\}_{n\in\w}\subseteq X$ such that $X=\bigcup_{n\in\w}\Ball_d(c_n,\e)$ for every $\e>0$.

Let $x_0=c_0$ and $C_0=\{c_0\}$. Using Claim~\ref{cl:step}, for every $n\in\IN$ choose inductively a point $x_n\in I$ and a finite set $C_n\subseteq X$ such that the following conditions are satisfied:
\begin{enumerate}
\item $C_n=\{c_n\}\cup C_{<n}\cup\bigcup_{k=1}^n\{f_{a,s}(x_i):a\in C_{<n}^{k+1},\;s\in\{0,1\}^k_+,\; i<n\}$, where $C_{<n}=\bigcup_{k\in n}C_k$;
\item $x_n\in I\setminus\bigcup_{k=1}^n\bigcup_{a\in C_n^{k+1}}\bigcup_{s\in\{0,1\}^k_+}\bigcup_{c\in C_n}f_{a,s}^{-1}(\Ball_d(c,1))$.
\end{enumerate}

For every positive $\e\le 1$ let
$$\textstyle(0,1]^n_{<\e}\defeq\{x\in(0,1]^n:\sum_{i\in n}x(i)<\e\}$$
and
$$U(\e)\defeq\bigcup_{m\in\IN}\bigcup_{2\le k<\e m}\textstyle\big\{f_{a,s}(x_m):a\in\bigcup\big\{\prod_{i=0}^k\Ball_d(C_m,\delta(i)):\delta\in(0,1]_{<\e}^k\big\},\; s\in \{0,1\}_+^{k-1}\big\}.$$

The definition implies that $U(\e)\subseteq U(\e')$ for any positive real numbers $\e\le\e'\le 1$.

\begin{claim}\label{cl2} For every positive real number $\e\le\frac12$ we have $U(\e)U(\e)\subseteq U(2\e)$.
\end{claim}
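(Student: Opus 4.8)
The plan is to fix representatives $y=f_{a,s}(x_m)$ and $y'=f_{a',s'}(x_{m'})$ of the two factors, with $2\le k<\e m$, $s\in\{0,1\}^{k-1}_+$, coefficients $a_i\in\Ball_d(C_m,\delta(i))$ and $\sum_i\delta(i)<\e$, and the analogous primed data, and to rewrite the product $yy'$ as a single polynomial $f_{A,S}(x_M)$ with $M\defeq\max\{m,m'\}$ that witnesses $yy'\in U(2\e)$. Two structural facts drive everything: the chain $(C_n)_{n}$ is increasing, so $C_m,C_{m'}\subseteq C_M$; and subinvariance of $d$ yields $d\big(f_{b,s}(x),f_{b',s}(x)\big)\le\sum_i d(b_i,b_i')$ for coefficient tuples of equal length, obtained by swapping one coefficient at a time and applying the triangle inequality.

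The first device is to concatenate the two polynomials by inserting a neutral factor $x_M^{0}=1$ between them instead of merging the last coefficient $a_{k-1}$ of $y$ with the first coefficient $a'_0$ of $y'$. This keeps every coefficient of the product equal to one of the original $a_i$ or $a'_j$, so that no product of two coefficients is ever formed — which matters because the finite set $C_M$ need not be closed under multiplication and I want each coefficient to stay inside a small ball about $C_M$. When $m=m'=M$ this already finishes the argument: $yy'=a_0x_M^{s_0}\cdots a_{k-1}\,x_M^{0}\,a'_0x_M^{s'_0}\cdots a'_{k'-1}=f_{A,S}(x_M)$ for $A=(a_0,\dots,a_{k-1},a'_0,\dots,a'_{k'-1})$ and $S=(s_0,\dots,s_{k-2},0,s'_0,\dots,s'_{k'-2})$. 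Every coefficient already lies in the right ball, the radii concatenate to a tuple of total weight $<2\e$, the new parameter $K=k+k'$ satisfies $2\le K<2\e M$ because $k<\e M$ and $k'<\e M$, and $S\in\{0,1\}^{K-1}_+$ since $s\ne 0$.

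The main obstacle is the case $m\ne m'$, say $m<m'=M$: now $y$ and $y'$ involve two distinct variables $x_m$ and $x_{m'}$, so no single-variable polynomial in $x_M$ can reproduce $yy'$ while keeping $x_m$ visible. I resolve this by absorbing the whole lower-index factor into one constant coefficient, writing $yy'=y\,x_M^{0}\,a'_0x_M^{s'_0}\cdots a'_{k'-1}=f_{A,S}(x_M)$ with $A=(y,a'_0,\dots,a'_{k'-1})$ and $S=(0,s'_0,\dots,s'_{k'-2})$. The one thing to verify is that the new constant $y$ lies close to $C_M$, and this is exactly where the recursive definition of $C_M$ is used: picking $\tilde a_i\in C_m\subseteq C_{<M}$ with $d(a_i,\tilde a_i)<\delta(i)$, the element $\tilde y\defeq f_{\tilde a,s}(x_m)$ belongs to $C_M$ precisely because $m<M$ (it is one of the allowed generators $f_{\tilde a,s}(x_i)$ with $i<M$ in the construction of $C_M$), while subinvariance gives $d(y,\tilde y)\le\sum_i\delta(i)<\e$. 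Hence $y\in\Ball_d(C_M,\Delta_0)$ for some $\Delta_0\in(0,\e)$, and together with the radii $\delta'(j)$ the total weight stays below $2\e$. The remaining bookkeeping is routine: here $K=k'+1$, and $k'\ge2$ forces $\e M>1$, so $2\le K<2\e M$; the tuple $S$ is nonzero because $s'\ne0$; and the case $m'<m$ is symmetric. Thus $yy'\in U(2\e)$ in every case. The genuinely delicate point is this absorption step, and the verification that the absorbed factor falls in a ball of radius $<\e$ around $C_M$ is precisely what the ``$i<M$'' clause in the construction of $C_M$ was designed to deliver.
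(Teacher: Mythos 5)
Your proof is correct and follows essentially the same route as the paper's: the same split into the cases $m=m'$ and $m\ne m'$, the same concatenation with an inserted exponent $0$ in the equal-index case, and in the unequal case the same absorption of the lower-index factor into a single constant coefficient, verified to lie within $\sum_i\delta(i)<\e$ of $C_M$ via clause (1) of the inductive construction and subinvariance. The bookkeeping of $K$, the radii, and the nonvanishing of $S$ matches the paper's as well.
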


\begin{proof} Given two elements $u_1,u_2\in U(\e)$, for every $i\in\{1,2\}$, find numbers $m_i\in\IN$, $k_i\in[2,\e m_i)$, $\delta_i\in (0,1]^{k_i}_{<\e}$, $a_i\in \prod_{j=0}^{k_i}\Ball_d(C_{m_i},\delta_i(j))$ and $s_i\in\{0,1\}^{k_i-1}_+$ such that $u_i=f_{a_i,s_i}(x_{m_i})$.

If $m_1=m_2$, then let $m=m_1=m_2$, $k=k_1+k_2$, and $\delta=\delta_1\hat{\;}\delta_2$, $a=a_1\hat{\;}a_2$, $s=s_1\hat{\;}(0)\hat{\;}s_2$ be the concatenations of the corresponding sequences. Observe that $4\le k=k_1+k_2<2\e m$, $\delta\in(0,1]^{k}_{<2\e}$, $a\in\prod_{i=0}^k\Ball_d(C_m,\delta(i))$,  $s\in\{0,1\}^{k-1}_+$, and $u_1u_2=f_{a,s}(x_m)$, witnessing that $u_1u_2\in U(2\e)$.

If $m_1<m_2$, then let $m=m_2$, $k=1+k_2<k_1+k_2<\e m_1+\e m_2<2\e m$, $\delta=(\sum_{j\in k_1}\delta_1(j))\hat{\;}\delta_2\in(0,1]^{k}_{<2\e}$, $a=(f_{a_1,s_1}(x_{m_1}))\hat{\;}a_2$, and $s=(0)\hat{\;}s_2\in\{0,1\}^{k-1}_+$.
We claim that $a(0)=f_{a_1,s_1}(x_{m_1})\in \Ball_d(C_m,\delta(0))$.
Since $a_1\in \prod_{i=0}^{k_1}\Ball_d(C_{m_1},\delta(i))$, there exists $a'_1\in C_{m_1}^{k_1}\subseteq C_{<m}^{k_1}$ such that $d(a_1(i),a_1'(i))<\delta_1(i)$ for all $i\in k_1<m_1\e<m$. By the condition (1) of the inductive construction, $f_{a_1',s_1}(x_{m_1})\in C_m$. The subinvariance of the pseudometric $d$ ensures that
$$d(f_{a_1,s_1}(x_{m_1}),f_{a_1',s_1}(x_{m_1}))\le\sum_{i\in k_1}d(a_1(i),a_1'(i))<\sum_{i\in k_1}\delta_1(i)=\delta(0)$$and hence $a(0)=f_{a_1,s_1}(x_{m_1})\in \Ball_d(C_m,\delta(0))$ and $a\in \prod_{i=0}^k\Ball_d(C_m,\delta(i))$. Then $u_1u_2=f_{a,s}(x_m)\in U(2\e)$ by the definition of $U(2\e)$.

By analogy we can show that $u_1u_2\in U(2\e)$ if $m_1>m_2$.
\end{proof}

\begin{claim}\label{cl3} For every $\e\in(0,1]$ and $b\in X$ there exists $\e'\in(0,1]$ such that $bU(\e')\cup U(\e')b\subseteq U(\e)$.
\end{claim}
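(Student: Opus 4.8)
The plan is to treat left and right multiplication separately; by the symmetry of the two cases I describe the right one, $U(\e')b\subseteq U(\e)$, in detail, the left one being analogous. Fix $\e\in(0,1]$ and $b\in X$. The elementary observation driving the argument is that if $u=f_{a,s}(x_m)\in U(\e')$, then $ub=f_{a',s}(x_m)$, where $a'$ is obtained from $a=(a_0,\dots,a_k)$ by multiplying its last coordinate by $b$ on the right; the index $m$, the exponent pattern $s$ and the degree $k$ are all unchanged. Thus, to place $ub$ in $U(\e)$, it suffices to produce an exponent vector $\delta'\in(0,1]^k_{<\e}$ witnessing $a'\in\prod_{i=0}^k\Ball_d(C_m,\delta'(i))$; since $a'$ differs from $a$ only in its last coordinate, only that last ball has to be re-centred and its radius re-estimated.

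Next I would use separability and subinvariance to control the new last coordinate $a_kb$. Since $d$ is separable, choose $N\in\w$ with $d(b,c_N)<\e/4$, and then fix $\e'\le\e/4$ small enough that every index $m$ admissible in the definition of $U(\e')$ satisfies $m>N$; this is automatic because the constraint $2\le k<\e'm$ forces $m>2/\e'$, so $\e'<2/N$ suffices. For such $m$ we have $c_N\in C_N\subseteq C_m$. If $c\in C_m$ is a centre witnessing $a_k\in\Ball_d(C_m,\delta(k))$, then subinvariance yields $d(a_kb,cb)\le d(a_k,c)<\delta(k)$ and $d(cb,cc_N)\le d(b,c_N)<\e/4$, so $a_kb$ lies within $\delta(k)+\e/4$ of the product $cc_N$ of two members of $C_m$.

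The heart of the matter, and the step I expect to be the main obstacle, is to replace the product $cc_N$ by a genuine centre belonging to $C_m$. The set $C_m$ is not a subsemigroup, so $cc_N$ need not lie in $C_m$; instead I would exploit the inductive closure property (1) together with separability. Writing $c$ in its polynomial form and absorbing $c_N$ into its rightmost coefficient expresses $cc_N$ through shorter $C_{<m}$-data, and separability supplies members $c_p$ of the dense sequence arbitrarily close to the finitely many products that arise; shrinking $\e'$ further forces the relevant indices $m$ to exceed all these $p$, so that the approximating points $c_p$ already lie in $C_m$. Keeping the approximation error below $\e/4$ and recalling $\sum_i\delta(i)<\e'\le\e/4$, one enlarges the single coordinate to $\delta'(k)=\delta(k)+\e/4+(\text{error})$ while leaving $\delta'(i)=\delta(i)$ for $i\ne k$; then $\sum_i\delta'(i)<\e'+\e/4+\e/4<\e$ and $2\le k<\e'm\le\e m$, so $ub\in U(\e)$. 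Controlling these product-approximations uniformly over the growing finite sets $C_m$, keeping every needed index below $m$, is precisely the delicate point where the interplay between the inductive construction (1), subinvariance, and separability must be used most carefully. The left-multiplication inclusion $bU(\e')\subseteq U(\e)$ follows by the same argument applied to the first coordinate $a_0$, replaced by $ba_0$.
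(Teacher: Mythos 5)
Your opening reduction is where the argument breaks. You absorb $b$ into the last coefficient, writing $ub=f_{a',s}(x_m)$ with $a'(k)=a(k)b$ and the length $k$ unchanged, and you then need a centre in $C_m$ close to $a(k)b$. Your chain of estimates gets you (via subinvariance) within $\delta(k)+\e/4$ of a product $c\,c_N$ with $c\in C_m$, but $C_m$ is not multiplicatively closed, and the repair you sketch does not close: the products $c\,c_N$ that must be approximated range over $c\in C_m$ for \emph{every} admissible $m$, and $m$ is not known when $\e'$ is chosen --- it is determined only afterwards by the element $u\in U(\e')$. Since the sets $C_m$ grow without bound, the indices $p$ of the approximating points $c_p$ supplied by separability cannot be bounded in advance, so the requirement $p<m$ (needed to get $c_p\in C_m$) cannot be arranged by shrinking $\e'$; the dependence is circular. (Also, for $c$ of the form $c_j$ rather than $f_{a,s}(x_i)$, the product $c\,c_N$ contains no occurrence of any $x_i$, so condition (1) of the inductive construction never places it in any $C_m$.) You correctly flag this as ``the delicate point,'' but it is not a technicality --- it is the obstruction, and your proposal does not overcome it.

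The paper's proof avoids the problem entirely by \emph{lengthening} the coefficient sequence instead of modifying a coefficient: it writes $bu=f_{a',s'}(x_m)$ with $a'=(b)\,\hat{\;}\,a$, $s'=(0)\,\hat{\;}\,s$ and $k'=k+1$, so that $b$ itself is the new coefficient. Then only the single fixed point $b$ must be approximated by an element of $C_m$; one fixes $\mu$ with $b\in\Ball_d(c_\mu,\tfrac{\e}{2})$ once and for all, and chooses $\e'<\tfrac{\e}{2}$ with $\e'\mu<2$, which forces $\mu<m$ for every admissible $m$ and hence $c_\mu\in C_m$. The budget $\sum_i\delta'(i)<\tfrac{\e}{2}+\e'<\e$ and the degree bound $k'<2\e'm\le\e m$ then come for free. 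If you want to salvage your write-up, replace the ``multiply the last coordinate'' step by this concatenation with a zero exponent slot; the rest of your estimates are then unnecessary.
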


\begin{proof} Find $\mu\in\IN$ such that $b\in \Ball_d(c_\mu,\frac\e2)$. Choose $\e'<\frac\e2$ such that $\e' \mu<2$. Given any $u\in U(\e')$, find $m\in\IN$, $k\in[2,\e' m)$, $\delta\in(0,1]^k_{<\e'}$, $a\in\prod_{i=0}^k\Ball_d(C_m,\delta(i))$, and $s\in\{0,1\}^k_+$ such that $u=f_{a,s}(x_m)$. It follows from $\e' \mu<2\le k<\e' m$ that $\mu<m$ and hence $c_\mu\in C_{m}$. Let $k'=1+k<2\e' m\le \e m$, $\delta'=(\frac12\e)\hat{\;}\delta\in(0,1]^{k'}_{<\frac12\e+\e'}\subseteq(0,1]^{k'}_{\e}$, $a'=(b)\hat{\;}a\in \prod_{i=0}^{k'}\Ball_d(C_m,\delta'(i))$, $s'=(0)\hat{\;}s\in\{0,1\}^{k'-1}_+$, and observe that $bu=f_{a',s'}(x_m)\in U(\e)$ and hence $bU(\e')\subseteq U(\e)$.

By analogy we can prove that $U(\e')b\subseteq U(\e)$.
\end{proof}

\begin{claim}\label{cl4} For every $b\in X$ there exists $\e\in(0,1]$ such that $b\notin U(\e)$.
\end{claim}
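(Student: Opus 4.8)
The plan is to fix $b\in X$ and produce an explicit $\e\in(0,1]$ with $b\notin U(\e)$. The whole point is that the genericity built into the points $x_m$ by condition~(2) keeps every image $f_{a,s}(x_m)$ (with coefficients in $C_m$) at distance $\ge 1$ from the finite set $C_m$, whereas a putative equality $b=f_{a,s}(x_m)$ with tiny coefficient-perturbations would force such an image to be very close to $b$, and hence to a nearby center. First I would invoke separability: applying the density of $\{c_n\}_{n\in\w}$ with $\e_0=\frac12$ gives $j\in\w$ with $d(b,c_j)<\frac12$. Since the sets $C_n$ are nested, $C_0\subseteq C_1\subseteq\cdots$ (by condition~(1)), and $c_j\in C_j$, we get $c_j\in C_m$ for every $m\ge j$. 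I would then set $\e=\min\{\frac14,\frac1{j+1}\}\in(0,1]$, so that $\e<\frac12$ and $2/\e>j$.

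I would then suppose, for contradiction, that $b\in U(\e)$. By the definition of $U(\e)$ there are $m\in\IN$, $k\in[2,\e m)$, a vector $s\in\{0,1\}^{k-1}_+$, a tuple $\delta\in(0,1]^k_{<\e}$ and coefficients $a$ with $a_i\in\Ball_d(C_m,\delta(i))$ satisfying $b=f_{a,s}(x_m)$. For each $i$ I would choose a center $a'_i\in C_m$ with $d(a_i,a'_i)<\delta(i)$, obtaining a tuple $a'$ of elements of $C_m$. Two consequences of $2\le k<\e m$ are used: first $m>2/\e>j$, so that $c_j\in C_m$; and second the degree of $f_{a',s}$ does not exceed $k<\e m\le m$, so $f_{a',s}$ lies within the range of polynomials quantified over in condition~(2) for $x_m$.

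Now the two estimates collide. By subinvariance of $d$, applied coordinate by coordinate exactly as in the proof of Claim~\ref{cl2}, $d(b,f_{a',s}(x_m))=d(f_{a,s}(x_m),f_{a',s}(x_m))\le\sum_i d(a_i,a'_i)<\sum_i\delta(i)<\e$, whence $d(f_{a',s}(x_m),c_j)\le d(f_{a',s}(x_m),b)+d(b,c_j)<\e+\frac12<1$. On the other hand, since $a'$ has all coefficients in $C_m$, the degree of $f_{a',s}$ is at most $m$, and $c_j\in C_m$, condition~(2) in the construction of $x_m$ gives $f_{a',s}(x_m)\notin\Ball_d(c_j,1)$, i.e. $d(f_{a',s}(x_m),c_j)\ge1$. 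This contradiction proves $b\notin U(\e)$.

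I expect the only real care to be in the bookkeeping of the last paragraph: one must verify that the centered polynomial $f_{a',s}(x_m)$ genuinely lies in the scope of condition~(2), namely that its coefficients and the target $c_j$ all belong to $C_m$ and that its degree does not exceed $m$. All three are forced by choosing $\e$ small (which makes $m>2/\e$ large) together with the nestedness of the $C_n$; beyond this accounting there is no hard step.
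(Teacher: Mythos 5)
Your proof is correct and follows essentially the same route as the paper's: pick a center $c_j$ with $d(b,c_j)<\frac12$, shrink $\e$ so that the witnessing index $m$ is forced to exceed $j$ (hence $c_j\in C_m$), replace the perturbed coefficients by nearby centers in $C_m$, and use subinvariance to contradict condition~(2) of the inductive construction. The only cosmetic difference is your choice $\e=\min\{\frac14,\frac1{j+1}\}$ versus the paper's ``$\e\le\frac12$ with $\mu\e<2$,'' which changes nothing.
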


\begin{proof} Find $\mu\in\IN$ such that $b\in \Ball_d(c_\mu,\frac12)$. Choose $\e\in(0,\frac12]$ such that $\mu\e<2$. Assuming that $b\in U(\e)$, we can find $m\in\IN$, $k\in[2,\e m)$, $\delta\in(0,1]^k_{<\e}$, $a\in \prod_{i=0}^k\Ball_d(C_m,\delta(i))$, and $s\in\{0,1\}^{k-1}_+$ such that $b=f_{a,s}(x_m)$. Find $a'\in C_m^{k+1}$ such that $d(a(i),a'(i))<\delta(i)$ for all $i\le k$. The subinvariance of the pseudometric $d$ ensures that $d(f_{a,s}(x_m),f_{a',s}(x_m))\le\sum_{i=0}^kd(a(i),a'(i))<\sum_{i=0}^k\delta(i)<\e\le\frac12$.
Then $$d(f_{a',s}(x_m),c_\mu)\le d(f_{a',s}(x_m),f_{a,s}(x_m))+d(f_{a,s}(x_m),c_\mu)<\tfrac12+\tfrac12<1,$$which contradicts the inductive condition (2) as $\mu\e<2\le k<m\e$ and hence $\mu<m$ and $c_\mu\in C_m$.
\end{proof}

Consider the topology $\tau$ on $X^0$, generated by the base
$$\big\{\{x\}:x\in X\big\}\cup\big\{\{0\}\cup U(\e):\e\in(0,1]\big\}.$$
Claims~\ref{cl2}--\ref{cl4} ensure that $\tau$ is a Hausdorff semigroup topology on $X^0$. Since $0$ is a unique non-isolated point in $(X^0,\tau)$, the set $X$ is not closed in $(X^0,\tau)$, witnessing that the semigroup $X$ is not zero-closed.

\section{Proof of Theorem~\ref{t:poly}}\label{s:poly}

Let $X$ be a nonempty zero-closed semigroup.

If $X$ is finite, then by Theorem 2.7 \cite{HS}, $X$ contains an idempotent $e$ such that the subsemigroup $eXe$ of $X$ is a group. Since the group $eXe$ is finite, there exists $n\in\IN$ such that $x^n=e$ for all $x\in eXe$. Then for the semigroup polynomial $f:X\to X$, $f:x\mapsto (exe)^n$, we have $X=f^{-1}(e)$, which means that $\cov(\A_X)\le 1<\max\{2,|X|\}$.
\smallskip

Next, assume that $X$ is countable and infinite.

By Proposition~\ref{p:ideal}, the set $I\defeq\bigcap_{a\in X}XaX$ is non-empty. Fix any point $o\in I$. Observe that the $\{0,1\}$-valued metric
$d:X\times X\to \{0,1\}$ on $X$ is separable and subinvariant, and the unit ball $\Ball_d(o,1)=\{o\}$ is polybounded in $X$. By Theorem~\ref{t:local}, the semigroup $X$ is polybounded and $\cov(\A_X)<\w=\max\{2,|X|\}$.
\smallskip

Finally, assume that $X$ is uncountable.
To derive a contradiction, assume that $\cov(\A_X)\ge|X|$.

For a set $C\subseteq X$, let $P(C)$ be the set of all semigroup polynomials on $X$ with coefficients in $C$. It is clear that $|P(C)|\le\max\{|C|,\w\}$.
Write the set $X$ as $X=\{c_\alpha\}_{\alpha\in|X|}$.

By transfinite induction for every $\alpha<|X|$ choose a subsemigroup $C_\alpha\subset X^1$ and an element $x_\alpha\in X$ so that the following conditions are satisfied:
\begin{enumerate}
\item the semigroup $C_\alpha$ is generated by the set $\{1\}\cup\{c_\beta\}_{\beta\le\alpha}\cup\{x_\beta\}_{\beta<\alpha}$;
\item $f(x_\alpha)\notin C_\alpha$ for every semigroup polynomial $f\in P(C_\alpha)$.
\end{enumerate}

The element $x_\alpha$ exists since $\cov(\A_X)\ge|X|>|C_\alpha\times P(C_\alpha)|$.

For every $\alpha<|X|$ consider the set  $$U_{\alpha}=\bigcup_{\alpha\le \beta<|X|}\{f(x_\beta):f\in P(C_{\alpha})\}.$$

\begin{claim}\label{cl5} For any ordinals $\alpha\le\beta<|X|$ we have
\begin{enumerate}
\item $U_\beta\subseteq U_\alpha$;
\item $U_\alpha U_\alpha\subseteq U_\alpha$;
\item $C_\alpha U_\alpha C_{\alpha}\subseteq U_\alpha$;
\item $C_\alpha\cap U_\alpha=\emptyset$.
\end{enumerate}
\end{claim}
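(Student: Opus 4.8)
The plan is to read off all four inclusions from two structural features of the construction, recorded at the outset. First, the chain $(C_\alpha)_{\alpha<|X|}$ is increasing: the generating set prescribed in condition~(1) grows with $\alpha$, so $C_\alpha\subseteq C_\beta$ and hence $P(C_\alpha)\subseteq P(C_\beta)$ whenever $\alpha\le\beta$. Second, for every $\beta$ and every admissible $f$ the value $f(x_\beta)$ is a product of elements of $C_\beta\subseteq C_{\beta+1}$ together with the generator $x_\beta\in C_{\beta+1}$, so $f(x_\beta)\in C_{\beta+1}$; while condition~(2) of the inductive construction guarantees $f(x_\beta)\notin C_\beta$. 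With these two facts in hand I would treat the four items in the order (1), (4), (3), (2).

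Item~(1) I would dispatch by inspection: an element of $U_\beta$ has the form $f(x_\gamma)$ with $\gamma\ge\beta\ge\alpha$, and the pool of admissible polynomials attached to the index $\gamma$ is the same in both unions, so the very pair $(f,\gamma)$ already witnesses membership in $U_\alpha$; thus $U_\beta\subseteq U_\alpha$. Item~(4) follows from the two recorded facts: any $u\in U_\alpha$ equals $f(x_\gamma)$ for some $\gamma\ge\alpha$, whence $u\notin C_\gamma\supseteq C_\alpha$, so $C_\alpha\cap U_\alpha=\emptyset$.

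For item~(3) I would use an absorption trick. Given $c,c'\in C_\alpha$ and $u=f(x_\gamma)\in U_\alpha$ with $\gamma\ge\alpha$, both $c$ and $c'$ lie in $C_\alpha\subseteq C_\gamma$, so prepending $c$ to the leading coefficient of $f$ and appending $c'$ to its trailing coefficient produces a new admissible polynomial $\tilde f$, whose modified coefficients $c\,a_0$ and $a_n\,c'$ remain in the subsemigroup $C_\gamma$, with $\tilde f(x_\gamma)=cuc'$. Hence $cuc'\in U_\alpha$.

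The real content is item~(2). Writing $u_1=f_1(x_{\beta_1})$ and $u_2=f_2(x_{\beta_2})$ with $\beta_1,\beta_2\ge\alpha$, I would split on whether the two variable-indices agree. If $\beta_1=\beta_2=\beta$, the product is the concatenated polynomial evaluated at the common point $x_\beta$, the single merged middle coefficient again lying in $C_\beta$, so $u_1u_2\in U_\alpha$. The genuine obstacle is the case $\beta_1\ne\beta_2$, say $\beta_1<\beta_2$, where the two factors are evaluated at \emph{different} variables and so cannot be concatenated into one polynomial in a single variable. The key observation resolving this is that the smaller-index factor is itself a coefficient at the larger index: by the first recorded fact $u_1=f_1(x_{\beta_1})\in C_{\beta_1+1}\subseteq C_{\beta_2}$, so absorbing $u_1$ into the leading coefficient of $f_2$ yields an admissible polynomial in the single variable $x_{\beta_2}$ whose value is $u_1u_2$; since $\beta_2\ge\alpha$ this places $u_1u_2$ in $U_\alpha$. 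This absorption step, converting a value at a lower variable into a coefficient at a higher one, is the crux of the whole claim, and I expect it to be the main point requiring care.
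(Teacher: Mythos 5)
Your proof is correct and follows essentially the same route as the paper's: item (1) by inspection, item (4) directly from condition (2) of the inductive construction, item (3) by absorbing the two outer factors into the leading and trailing coefficients, and item (2) by concatenation when the variable indices agree and, when they differ, by absorbing the lower-index value $f_1(x_{\beta_1})\in C_{\beta_1+1}\subseteq C_{\beta_2}$ as a coefficient of a polynomial evaluated at $x_{\beta_2}$. The only difference is cosmetic: you spell out why $f_1(x_{\beta_1})$ is a legitimate coefficient in $P(C_{\beta_2})$, a step the paper dismisses with ``it is clear''.
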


\begin{proof}
1. The inclusion $U_\beta\subseteq U_\alpha$ follows from $\alpha\le\beta$ and the definition of the sets $U_\alpha$ and $U_\beta$.
\smallskip

2. Given two elements $u_1,u_2\in U_\alpha$, for every $i\in\{1,2\}$, find an ordinal $\gamma_i\ge\alpha$ and a polynomial $f_i\in P(C_{\gamma_i})$ such that $u_i=f_i(x_{\gamma_i})$.

If $\gamma_1=\gamma_2$, then $u_1u_2=f(x_\gamma)$ for the ordinal $\gamma=\gamma_1=\gamma_2$ and the polynomial $f:X\to X$, $f:x\mapsto f_1(x)f_2(x)$. It is clear that $f\in P(C_\gamma)$ and hence  $u_1u_2\in U_\alpha$.

If $\gamma_1<\gamma_2$, then $u_1u_2=f(x_{\gamma_2})$ for the
 the polynomial $f:X\to X$, $f:x\mapsto f_1(x_{\gamma_1})f_2(x)$. It is clear that $f\in P(C_{\gamma_2})$ and hence  $u_1u_2\in U_\alpha$.

If $\gamma_1>\gamma_2$, then $u_1u_2=f(x_{\gamma_1})$ for the
 the polynomial $f:X\to X$, $f:x\mapsto f_1(x)f_2(x_{\gamma_2})$. It is clear that $f\in P(C_{\gamma_1})$ and hence  $u_1u_2\in U_\alpha$.
\smallskip

3. Given any elements $a,b\in C_\alpha$ and $u\in U_\alpha$, find an ordinal $\gamma\in [\alpha,|X|)$ and a semigroup polynomial $f\in P(C_\gamma)$ such that  $u=f(x_\gamma)$. Then $aub=g(x_\gamma)$ for the semigroup polynomial $g:X\to X$, $g:x\mapsto af(x)b$. It is clear that $g\in P(C_\gamma)$ and hence $aub\in U_\alpha$.
\smallskip

4. Assuming that $C_\alpha\cap U_\alpha$ contains some element $c$, find an ordinal $\gamma\in[\alpha,|X|)$ and a semigroup polynomial $f\in P(C_\gamma)$ such that $f(x_\gamma)=c$. But $f(x_\gamma)=c\in C_\alpha\subseteq C_\gamma$ contradicts the choice of $x_\gamma$.
\end{proof}

Let $\tau$ be the topology on $X^0=\{0\}\cup X$ generated by the base $$\big\{\{x\}:x\in X\big\}\cup\big\{\{0\}\cup U_\alpha:\alpha<|X|\big\}.$$
Claim~\ref{cl5} implies that $(X^0,\tau)$ is a Hausdorff topological semigroup contaning $X$ as a nonclosed subsemigroup, witnessing that the semigroup $X$ is not zero-closed. This is a contradiction showing that $\cov(\A_X)<|X|=\max\{2,|X|\}$.

\section{Preservation of the polyboundedness of inverse spectra}\label{s:lim}

In this section we prove a theorem on preservation of polyboundedness by inverse spectra of topological semigroups. First we recall the necessary definitions, see \cite{Chig} for more information on inverse spectra.

A {\em directed set} is a set $D$ endowed with a reflexive transitive relation $\preceq$ such that for any $x,y\in D$ there exists $z\in D$ such that $x\preceq z$ and $y\le z$. A directed set $D$ is called {\em $\sigma$-directed} if for any countable subset $C\subseteq D$ there exists $y\in D$ such that $x\preceq y$ for all $x\in C$.

For an element $\alpha$ of a directed set $D$ let ${\uparrow}\alpha\defeq\{x\in P:\alpha\preceq x\}$ be the {\em upper set} of $\alpha$ in $D$.

An {\em inverse spectrum of topological semigroups} is a family $\Sigma=(X_\alpha,\pi^\beta_\alpha:\alpha\preceq\beta\in D)$ consisting of topological semigroups $X_\alpha$ indexed by elements of a directed set $D$ and continuous homomorphisms $\pi^\beta_\alpha:X_\beta\to X_\alpha$, defined for any elements $\alpha\preceq\beta$ in $D$, such that for every elements $\alpha\preceq\beta\preceq\gamma$ in $D$ the following conditions are satisfied:
\begin{itemize}
\item $\pi^\alpha_\alpha:X_\alpha\to X_\alpha$ is the identity homomorphism of $X_\alpha$, and
\item $\pi^\gamma_\alpha=\pi^\beta_\alpha\circ\pi^\gamma_\beta$.
\end{itemize}
An inverse spectrum $\Sigma=(X_\alpha,\pi^\beta_\alpha:\alpha\preceq\beta\in D)$ is called {\em $\sigma$-directed} if its indexing directed set $D$ is $\sigma$-directed.

For an inverse spectrum of topological semigroups $\Sigma=(X_\alpha,\pi^\beta_\alpha:\alpha\preceq\beta\in D)$ its {\em limit} is the topological subsemigroup
$$\textstyle\lim\Sigma=\{(x_\alpha)_{\alpha\in D}\in\prod_{\alpha\in D}X_\alpha:\forall \alpha,\beta\in P\;(\alpha\preceq\beta\;\Rightarrow\;\pi^\beta_\alpha(x_\beta)=x_\alpha)\}$$
of the Tychonoff product $\prod_{\alpha\in D}X_\alpha$.

\begin{theorem}\label{t:lim} A power-Lindel\"of topological semigroup $X$ is polybounded if $X$ is topologically isomorphic to the limit $\lim\Sigma$ of a $\sigma$-directed inverse spectrum $\Sigma=(X_\alpha,\pi^\beta_\alpha:\alpha\preceq\beta\in D)$ consisting of polybounded Hausdorff topological semigroups.
\end{theorem}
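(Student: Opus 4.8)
The plan is to transport the finite polybounded covers upward from the factors $X_\alpha$ to the limit $X=\lim\Sigma$, and then to convert the resulting finite covers by ``cylinders'' into finite covers by genuine algebraic sets. First I would fix, for each $\alpha\in D$, a finite polybounded cover $X_\alpha=\bigcup_{i=1}^{n_\alpha}f_{\alpha,i}^{-1}(b_{\alpha,i})$ witnessing the polyboundedness of $X_\alpha$, and pull it back along the canonical projection $\pi_\alpha\colon\lim\Sigma\to X_\alpha$, which is a continuous homomorphism. Since $\pi_\alpha$ is a homomorphism, the coefficients of $f_{\alpha,i}$ can be lifted to $X^1$ (the value $b_{\alpha,i}$ lies in $\pi_\alpha[X]$ whenever the corresponding piece is nonempty, and the constant coefficients can be lifted after replacing $X_\alpha$ by a subsemigroup carrying the relevant elements), producing semigroup polynomials $g_{\alpha,i}$ on $X$ with $\pi_\alpha\circ g_{\alpha,i}=f_{\alpha,i}\circ\pi_\alpha$. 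This yields, for each $\alpha$, a \emph{finite} cover of $X$ by the closed sets $C_{\alpha,i}=g_{\alpha,i}^{-1}\big(\pi_\alpha^{-1}(b_{\alpha,i})\big)$.

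The decisive difficulty is that $C_{\alpha,i}$ is the $g_{\alpha,i}$-preimage of a \emph{fiber} $\pi_\alpha^{-1}(b_{\alpha,i})$ rather than of a single point, so $C_{\alpha,i}$ need not be algebraic in $X$: at any single level the fibers are genuinely large, and no finite number of points covers them. The plan to defeat this is to exploit the inverse-limit identity $\bigcap_{\alpha\in D}\pi_\alpha^{-1}(\pi_\alpha(z))=\{z\}$ for every $z\in X$, which says precisely that fibers shrink to points as $\alpha$ increases. I would apply the power-Lindel\"of hypothesis to the finite powers $X^{n}$ matching the degrees of the polynomials in play (a degree-$n$ polynomial factors through the continuous multiplication $X^{n}\to X$), extracting from the totality of cylinder covers a \emph{countable} family of levels that already captures the separating behaviour of the whole spectrum. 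By $\sigma$-directedness of $D$ this countable family has an upper bound $\alpha^\ast\in D$, at which the accumulated refinements force the relevant fibers, intersected with the images $g_{\alpha^\ast,i}[X]$, to reduce to finitely many points; replacing each such fiber by those points turns the finite cylinder cover at level $\alpha^\ast$ into a finite cover of $X$ by algebraic sets, giving $\cov(\A_X)<\w$.

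The hard part is exactly this fiber-to-point collapse, i.e.\ proving that after amalgamating countably many levels the closed cylinder cover admits a finite \emph{genuinely algebraic} subcover. A naive attempt at a single level fails because fibers remain infinite, so the argument must interleave three ingredients: the finiteness of each level's cover, the power-Lindel\"of property (used on $X^{n}$ to turn the relevant closed/coincidence conditions into a countable selection of levels), and $\sigma$-directedness (used to compress that countable selection into one level $\alpha^\ast$). A secondary technical point, to be handled carefully, is the lifting of polynomial coefficients through $\pi_\alpha$ when the projection is not surjective; I expect this to be routine once the covers are expressed through the values $b_{\alpha,i}\in\pi_\alpha[X]$.
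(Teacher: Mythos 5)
There is a genuine gap, and it sits exactly at the step you yourself identify as the crux: the ``fiber-to-point collapse'' at a single amalgamated level $\alpha^\ast$. No such level exists in general. The identity $\bigcap_{\alpha\in D}\pi_\alpha^{-1}(\pi_\alpha(z))=\{z\}$ is an intersection over \emph{all} of $D$, and $\sigma$-directedness only lets you compress \emph{countably} many levels into one; in every non-metrizable instance of the theorem (which is the only interesting case) the fiber $\pi_{\alpha^\ast}^{-1}(b)$ remains as large as ever even after $\alpha^\ast$ has been chosen above a countable family. For instance, for $X=\{0,1\}^{\omega_1}$ presented as the limit of the $\sigma$-directed spectrum of its countable subproducts, no countable family of levels separates points of $X$, every fiber of every $\pi_{\alpha^\ast}$ is homeomorphic to $\{0,1\}^{\omega_1}$, and the power-Lindel\"of (here: compactness) hypothesis gives you no ``countable selection of levels that captures the separating behaviour of the whole spectrum'' --- that phrase asserts precisely what fails. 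Intersecting the fiber with $g_{\alpha^\ast,i}[X]$ does not rescue this: for a polynomial such as $x\mapsto axb$ on a group the image is everything. So the mechanism by which your finite cylinder cover is supposed to become a finite algebraic cover is absent, and I do not see how to supply it along these lines.

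The paper resolves the difficulty by a different device: it never shrinks a fiber at all. First, $\sigma$-directedness is used to stabilize the combinatorial shape of the witnessing covers --- the number $n$ of pieces and the degrees $(m_i)_{i\in n}$ --- on a cofinal set $A\subseteq D$. Then, for each $\alpha\in A$, the lifted targets and coefficients are assembled into a single tuple $s_\alpha\in X^\ell$ (with $\ell=n+\sum_{i\in n}(m_i+1)$), and the power-Lindel\"of property of $X^\ell$, combined once more with $\sigma$-directedness, yields a \emph{cluster point} $s=\big((b_i)_{i\in n},((a_{i,j})_j)_{i\in n}\big)$ of this net: every neighborhood of $s$ contains $s_\alpha$ for cofinally many $\alpha\in A$. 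The finite algebraic cover of $X$ is then $X=\bigcup_{i\in n}f_i^{-1}(b_i)$, where $f_i(x)=a_{i,0}xa_{i,1}\cdots xa_{i,m_i}$ is built from the \emph{limit} coefficients rather than from any level's lifted polynomial; the verification that this covers $X$ is a Hausdorff-separation and continuity argument that pushes a hypothetical uncovered point down to one sufficiently large level $\alpha\in A$ and derives a contradiction there. In short, Lindel\"ofness is used as a substitute for compactness of $X^\ell$ in extracting a cluster point of coefficient tuples, not to select levels that separate points; you will need to replace your second step entirely with an argument of this kind.
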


\begin{proof} We shall identify $X$ with the limit $\lim\Sigma$ of the inverse spectrum $\Sigma$. For every $\alpha\in D$, let $$\pi_\alpha:\lim \Sigma\to X_\alpha,\quad\pi_\alpha:(x_\delta)_{\delta\in D}\mapsto x_\alpha,$$be the coordinate projection. Let $\bar \pi_\alpha:(\lim \Sigma)^1\to X_\alpha^1$ be the unique extension of $\pi_\alpha$ such that $\bar\pi_\alpha(1)=1$.

For every $\alpha\in D$, let $P_\alpha$ be the set of all semigroup polynomials on the semigroup $X_\alpha$. Since $X_\alpha$ is polybounded, there exist a number $n_\alpha\in\IN$ and sequences $(b_{\alpha,i})_{i\in n_\alpha}\in X_\alpha^{n_\alpha}$ and $(f_{\alpha,i})_{i\in n_\alpha}\in P_\alpha^{n_\alpha}$ such that $X_\alpha=\bigcup_{i\in n_\alpha}f_{\alpha,i}^{-1}(b_{\alpha,i})$.
For every $i\in n_\alpha$ choose a point $\hat b_{\alpha,i}\in q_\alpha^{-1}(b_{\alpha,i})$.

For every $i\in n_\alpha$ there exist a number $m_{\alpha,i}\in\IN$ and a sequence $(a_{\alpha,i,j})_{j=0}^{m_{\alpha,i}}\in (X_\alpha^1)^{m_{\alpha,i}+1}$ such that $$f_{\alpha,i}(x)=
a_{\alpha,i,0}xa_{\alpha,i,1}\dots xa_{\alpha,i,m_{\alpha,i}}\quad\mbox{for every $x\in X_\alpha$.}$$ For every $j\in \{0,\dots,m_{\alpha,i}\}$, choose a point $\hat a_{\alpha,i,j}\in \pi_\alpha^{-1}(a_{\alpha,i,j})$.
Consider a semigroup polynomial $\hat f_{\alpha,i}:X\to X$, defined by
$$\hat f_{\alpha,i}(x)=\hat a_{\alpha,i,0}x\hat a_{\alpha,i,1}x\cdots x\hat a_{\alpha,i,m_{\alpha,i}}\quad\mbox{for $x\in X$},$$ and observe that $$f_{\alpha,i}\circ \pi_\alpha=\pi_\alpha\circ \hat f_{\alpha,i}.$$

\begin{claim}\label{cl:finite} There exist a sequence $(m(i))_{i\in n}\in\bigcup_{k\in\IN}\IN^k$ such that for every  $\alpha\in D$ there exists $\beta\in{\uparrow}\alpha$ such that $(m_{\beta,i})_{i\in n_\beta}=(m(i))_{i\in n}$.
\end{claim}

\begin{proof} Let $\IN^{<\IN}\defeq\bigcup_{k\in\IN}\IN^k$. Assuming that Claim~\ref{cl:finite} is not true, for every sequence $\vec m\in\IN^{<\IN}$, we can find $\alpha_{\vec m}\in D$ such that $(m_{\alpha,i})_{i\in n_\alpha}\ne \vec m$ for every $\alpha\in{\uparrow}\alpha_{\vec m}$. Since the index set $D$ is $\sigma$-directed, there exists an element $\alpha\in\bigcap_{\vec m\in\IN^{<\IN}}{\uparrow}\alpha_k$. Then for the sequence $\vec m=(m_{\alpha,i})_{i\in n_\alpha}$ we get a contradiction with the choice of $\alpha_{\vec m}$.
\end{proof}

By Claim~\ref{cl:finite}, there exist a number $n\in\IN$ and a sequence $(m_i)_{i\in n}\in\IN^n$ such that the set $$A\defeq\{\alpha\in D:(m_{\alpha,i})_{i\in n_\alpha}=(m_i)_{i\in n}\}$$is cofinal in $D$ in the sense that for every $\alpha\in D$ there exists $\beta\in A\cap{\uparrow}\alpha$.

 We shall identify the product $X^n\times\prod_{i\in n}X^{m_i+1}$ with the power $X^\ell$ for $\ell\defeq n+\sum_{i\in n}(m_i+1)$.

For every $\alpha\in\A$ consider the sequence
$$s_\alpha\defeq\big((\hat b_{\alpha,i})_{i\in n},((\hat a_{\alpha,i,j})_{j=0}^{m_{i}})_{i\in n}\big)\in X^{n}\times \prod_{i\in n}X^{m_{i}+1}=X^\ell.$$

\begin{claim}\label{cl:s} There exists a sequence $s=((b_i)_{i\in n},((a_{i,j})_{j=0}^{m_i})_{i\in n})\in X^n\times\prod_{i\in n}X^{m_i+1}$ such that for every neighborhood $W$ of $s$ in $X^\ell$ and every $\delta\in D$ there exists $\alpha\in A\cap{\uparrow}\delta$ such that $s_\alpha\in W$.
\end{claim}

\begin{proof} To derive a contradiction, assume that for every $s\in X^\ell$ there exists an open neighborhood $W_s\subseteq X^\ell$ of $s$ and an element $\alpha_s\in D$ such that $s_\alpha\notin W_s$ for every $\alpha\in A\cap {\uparrow}\alpha_s$. Since the space $X$ is power-Lindel\"off, the space $X^\ell$ is Lindel\"of and hence there exists a countable set $S\subseteq X^\ell$ such that $X^\ell=\bigcup_{s\in S}W_s$.

Since the index set $D$ is $\sigma$-directed, there exists an index $\alpha\in\bigcap_{s\in S}{\uparrow}\alpha_s$. Since the set $A$ is cofinal in $D$, we can additionally assume that $\alpha\in A$. Since $s_\alpha\in X^\ell=\bigcup_{s\in S}W_s$, there exists $s\in S$ such that $s_\alpha\in W_s$. Since $\alpha\in A\cap{\uparrow}\alpha_s$, the inclusion $s_\alpha\in W_s$ contradicts the choice of the index $\alpha_s$. This contradiction completes the proof of Claim~\ref{cl:s}.
\end{proof}

Let $s=((b_i)_{i\in n},((a_{i,j})_{j=0}^{m_i})_{i\in n})$ be the sequence from Claim~\ref{cl:s}.
For every $i\in n$ consider the semigroup polynomial $f_i:X\to X$ defined by $f_i(x)=a_{i,0}xa_{i,1}\cdots xa_{i,m_i}$ for $x\in X$.

\begin{claim}\label{cl:polyb} $X=\bigcup_{i\in n}f_i^{-1}(b_i)$.
\end{claim}

\begin{proof} Assuming that the claim does not hold, we can find $x\in X$ such that $f_i(x)\ne b_i$ for all $i\in n$. The space $X=\lim\Sigma$ is Hausdorff, being a subspace of the Tychonoff product of Hausdorff topological spaces $X_\alpha$. Then for every $i\in n$ there exist open sets $V_i$ and $W_i$ in $X$ such that $b_i\in V_i$, $f_i(x)\in W_i$, and $V_i\cap W_i=\emptyset$. By the continuity of the semigroup operation on the topological semigroup $X$, there exist open sets  $W_{i,0},W_{i,1},\dots,W_{i,m_i}$ in $X$ such that $(a_{i,0},\dots,a_{i,m_i})\in \prod_{j=0}^{m_i}W_{i,j}$ and $W_{i,0}xW_{i,1}x\cdots xW_{i,m_i}\subseteq W_i$.

By the definition of the Tychonoff product topology on $X=\lim\Sigma$, there exists an index $\alpha'\in D$ and a family of open sets $\big((V'_i)_{i\in n},((W'_{i,j})_{j=0}^{m_i})_{i\in n}\big)$ in $X_{\alpha'}$ such that $$\textstyle s\in W'\defeq \prod_{i\in n} \pi_{\alpha'}^{-1}[V'_i]\times\prod_{i\in n}\prod_{j=0}^{m_i}\pi_{\alpha'}^{-1}[W'_{i,j}]\subseteq\prod_{i\in n}V_i\times\prod_{i\in n}\prod_{j=0}^{m_i}W_{i,j}.$$ By the choice of $s$, there exists $\alpha\in A\cap{\uparrow}\alpha'$ such that $s_\alpha\in W'$.
Since $X_\alpha=\bigcup_{i\in n}f_{\alpha,i}^{-1}(b_{\alpha,i})$,  there exists $i\in n$ such that $\pi_\alpha(x)\in f^{-1}_{\alpha,i}(b_{\alpha,i})$.

It follows from $s_\alpha\in W'$ that $\hat b_{\alpha,i}\in \pi^{-1}_{\alpha'}[V_i']$ and $\hat a_{\alpha,i,j}\in \pi^{-1}_{\alpha'}[W'_{i,j}]\subseteq W_{i,j}$ for all $j\le m_i$. Then
$$\hat f_{\alpha,i}(x)=\hat a_{\alpha,i,0}x\hat a_{\alpha,i,1}x\cdots x\hat a_{\alpha,i,m_i}\in W_{i,0}xW_{i,1}x\cdots xW_{i,m_i}\subseteq W_i\subseteq X\setminus V_i\subseteq X\setminus \pi_{\alpha'}^{-1}[V_i']$$ and hence $$\pi_{\alpha'}(\hat b_{\alpha,i})=\pi^\alpha_{\alpha'}(b_{\alpha,i})=\pi^\alpha_{\alpha'}(f_{\alpha,i}\circ \pi_\alpha(x))=\pi^\alpha_{\alpha'}\circ \pi_\alpha\circ \hat f_{\alpha,i}(x)=\pi_{\alpha'}\circ\hat f_{\alpha,i}(x)\notin V'_i.$$ On the other hand,
$\hat b_{\alpha,i}\in \pi^{-1}_{\alpha'}[V_i']$ implies that
 $\pi_{\alpha'}(\hat b_{\alpha,i})\in V_i'$. This contradiction completes the proof of Claim~\ref{cl:polyb}.
\end{proof}

Claim~\ref{cl:polyb} witnesses that the semigroup $X$ is polybounded.
\end{proof}

\section{Proof of Theorem~\ref{t:MA}}\label{s:MA}

The following lemma proves Theorem~\ref{t:MA}(1).

\begin{lemma}\label{l:MA1} Let $X$ be a zero-closed semigroup admitting a separable complete subinvariant metric $d$. If $\cov(\M)=\mathfrak c$, then the semigroup $X$ is polybounded.
\end{lemma}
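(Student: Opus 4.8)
The plan is to apply the local characterization of Theorem~\ref{t:local}, so that it suffices to produce a separable subinvariant pseudometric on $X$ together with a point $o$ in the minimal ideal $I=\bigcap_{a\in X}XaX$ and an $\e>0$ for which the ball $\{x\in I:d(x,o)<\e\}$ is polybounded in $X$. The metric $d$ is given and is automatically separable and subinvariant, so the entire burden reduces to showing that some small ball inside $I$ around a suitable point $o$ is polybounded. Since Theorem~\ref{t:poly} already guarantees $\cov(\A_X)<|X|$, the semigroup is ``nearly'' polybounded, and the role of $\cov(\M)=\mathfrak c$ will be to upgrade this to genuine polyboundedness of a metrically small piece.

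First I would fix $o\in I$ (it exists by Proposition~\ref{p:ideal}) and, arguing by contradiction, assume that for \emph{every} $\e>0$ the ball $B_\e\defeq\{x\in I:d(x,o)<\e\}$ fails to be polybounded in $X$. The idea is then to build, exactly as in the proof of Theorem~\ref{t:local}, a descending family of open neighborhoods $U(\e)$ of a putative zero, now using the genuine metric $d$ and the hypothesis that the balls $B_\e$ resist being covered by finitely many algebraic sets. The completeness of $d$ is what distinguishes this situation from the abstract case: I expect it to be used to run a \emph{forcing/generic} construction rather than a purely transfinite one. Concretely, I would set up a countable poset $\mathbb P$ whose conditions are finite approximations to a sequence $(x_n)$ of points in $I$ together with finitely many metric constraints, designed so that a sufficiently generic sequence simultaneously evades all the finitely-generated algebraic covers while remaining Cauchy, hence convergent by completeness to a limit that must lie in $I$ (using that $I$ is closed, which follows from $I=\bigcap_a XaX$ being an intersection of closed sets in the completion, together with subinvariance). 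Under $\cov(\M)=\mathfrak c$, genericity over $\mathfrak c$-many dense sets is available, which is precisely enough density to defeat all $\mathfrak c$-many candidate polynomial covers.

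The main obstacle I anticipate is organizing the density arguments so that the generic sequence yields, via the limit construction, a Hausdorff semigroup topology on $X^0$ in which $X$ is not closed, thereby contradicting zero-closedness. The delicate point is that the sets $U(\e)$ must satisfy the three multiplicative closure properties $U(\e)U(\e)\subseteq U(2\e)$, $bU(\e')\cup U(\e')b\subseteq U(\e)$, and $b\notin U(\e)$ for small $\e$, exactly as in Claims~\ref{cl2}--\ref{cl4}; producing these from a metric that is only subinvariant (rather than invariant) requires the same concatenation bookkeeping as in Theorem~\ref{t:local}, but now the ``escaping'' inductive condition (2) has to be secured \emph{generically} rather than by one-step choice, and this is where $\cov(\M)=\mathfrak c$ enters to guarantee that a single sequence meets all the relevant dense sets at once. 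Once such a sequence is obtained, the topology generated by the base $\{\{x\}:x\in X\}\cup\{\{0\}\cup U(\e):\e\in(0,1]\}$ is Hausdorff and makes the multiplication continuous, with $0$ its unique non-isolated point, so $X$ is not zero-closed---the desired contradiction. Hence some ball $B_\e$ is polybounded, and Theorem~\ref{t:local} delivers the polyboundedness of $X$.
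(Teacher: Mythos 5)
The reduction to Theorem~\ref{t:local} is correct, and you rightly identify that Theorem~\ref{t:poly} and $\cov(\M)=\mathfrak c$ must combine to produce a polybounded ball inside $I$. But the mechanism you propose for this step --- a forcing-style generic construction over a countable poset that re-runs the $U(\e)$ machinery from the proof of Theorem~\ref{t:local} --- is never actually carried out, and is the wrong tool. It also contains concrete errors: $\cov(\M)=\mathfrak c$ (Martin's Axiom for countable posets) gives filters generic over \emph{fewer than} $\mathfrak c$ many dense sets, not over $\mathfrak c$ many as you assert; and $I=\bigcap_{a\in X}XaX$ need not be closed in a non-compact metric semigroup (each $XaX$ is a continuous image, not a preimage), so your appeal to ``$I$ is closed, hence the Cauchy limit lies in $I$'' does not go through. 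Moreover, if you are going to rebuild the sets $U(\e)$ and verify Claims~\ref{cl2}--\ref{cl4} from scratch, you are not using Theorem~\ref{t:local} at all, which defeats the purpose of the reduction you set up.

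The argument you are missing is a short Baire-category computation. If $I$ has an isolated point $o$, the ball $\{x\in I:d(x,o)<\e\}=\{o\}$ is polybounded and Theorem~\ref{t:local} applies at once. Otherwise $\overline I$ (the closure of $I$ in $X$) is a perfect Polish space --- this is the only place completeness and separability of $d$ are used --- and $\cov(\M)=\mathfrak c$ implies $\overline I$ cannot be covered by fewer than $\mathfrak c$ nowhere dense sets. Theorem~\ref{t:poly} yields a cover $X=\bigcup_{\alpha\in\kappa}f_\alpha^{-1}(b_\alpha)$ with $\kappa=\cov(\A_X)<|X|\le\mathfrak c$ (the last inequality because a separable metric space has cardinality at most $\mathfrak c$). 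The crucial observation is that subinvariance of $d$ makes multiplication, hence every semigroup polynomial $f_\alpha$, continuous, so each $f_\alpha^{-1}(b_\alpha)$ is \emph{closed}. Since $\kappa<\cov(\M)$, some $\overline I\cap f_\alpha^{-1}(b_\alpha)$ is non-meager and therefore has nonempty interior in $\overline I$; that interior contains a ball $\{x\in I:d(x,o)<\e\}$, which is polybounded because it sits inside a single algebraic set. No generic sequence and no limit argument is needed; without the closedness of the algebraic sets in the metric topology, however, the hypothesis $\cov(\M)=\mathfrak c$ cannot be brought to bear at all, and this is the genuine gap in your proposal.
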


\begin{proof} Let $\tau$ be the topology on $X$, generated by the subinvariant metric $d$. By Proposition~\ref{p:ideal}, the ideal $I=\bigcap_{a\in X}XaX$ is not empty. If $I$ contains an isolated point $o$, then there exists $\e>0$ such that the set $\{x\in I:d(x,o)<\e\}=\{o\}$ is polybounded. By Theorem~\ref{t:local}, the semigroup $X$ is polybounded.

So, now assume that the minimal ideal $I$ contains no isolated points. Then its closure $\overline I$ in $X$ also contains no isolated points. The Lavrentiev Theorem \cite[3.9]{Ke} implies that the Polish space $\overline I$ contains a  dense $G_\delta$-subset, homeomorphic to the space $\mathbb \IR\setminus\mathbb Q$ of rationals. Now the definition of the cardinal $\cov(\M)=\mathfrak c$ implies that the Polish space $\overline I$ cannot be covered by less than continuum many nowhere dense sets. By Theorem~\ref{t:poly}, $\kappa\defeq \cov(\A_X)<|X|=\mathfrak c$. Then $X=\bigcup_{\alpha\in\kappa}f_\alpha^{-1}(b_\alpha)$ for some elements $b_\alpha\in X$ and semigroup polynomials $f_\alpha$ on $X$. The subinvariance of the metric $d$ implies that the binary operation of the semigroup $X$ is continuous with respect to the topology $\tau$. Then for every $\alpha\in\kappa$ the semigroup polynomial $f_\alpha:X\to X$ is continuous and the set $f_\alpha^{-1}(b_\alpha)$ is closed in $(X,\tau)$. Since $\overline I=\bigcup_{\alpha\in\kappa}(f_\alpha^{-1}(b_\alpha)\cap\overline I)$ and
 $\kappa<\mathfrak c=\cov(\M)$, one of the sets $\overline I\cap f_{\alpha}^{-1}(b_\alpha)$ has non-empty interior in $\overline I$. So, we can find a point $o\in I\cap f^{-1}_\alpha(b_\alpha)$ and $\e>0$ such that the set  $\{x\in I:d(x,o)<\e\}$ is contained in $f_\alpha^{-1}(b_\alpha)$ and hence is polybounded in $X$. By Theorem~\ref{t:local}, the semigroup $X$ is polybounded.
\end{proof}

In the proof of Theorems~\ref{t:MA}(2) and \ref{t:pf} we shall need the following lemma.

\begin{lemma}\label{l:zp} Let $h:X\to Y$ be a  surjective homomorphism. If the semigroup $X$ is zero-closed, then the semigroup $Y$ is zero-closed, too.
\end{lemma}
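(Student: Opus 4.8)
The plan is to argue by contraposition: assuming $Y$ is not zero-closed, I will build a Hausdorff semigroup topology on $X^0$ witnessing that $X$ is not zero-closed, contradicting the hypothesis. So suppose $Y$ fails to be zero-closed, i.e. there is a Hausdorff semigroup topology $\tau_Y$ on $Y^0$ in which $Y$ is not closed. Since $Y^0\setminus Y=\{0\}$, this means precisely that $0$ is a non-isolated point of $(Y^0,\tau_Y)$, so every $\tau_Y$-neighbourhood of $0$ meets $Y$. Extend $h$ to a map $h^0:X^0\to Y^0$ by setting $h^0(0)=0$; since $0$ is absorbing on both sides and $h$ is a homomorphism, $h^0$ is a surjective semigroup homomorphism.

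The naive candidate, namely the initial (preimage) topology $(h^0)^{-1}(\tau_Y)$ on $X^0$, is a semigroup topology in which $0$ is non-isolated, but it fails to be Hausdorff as soon as $h$ is not injective, since two points with the same $h$-image cannot be separated. I would repair this by mixing in the discrete topology on $X$: declare every point of $X$ isolated, and let the basic neighbourhoods of $0$ be the sets $(h^0)^{-1}(V)$ for $V$ a $\tau_Y$-open neighbourhood of $0$. One checks that the singletons $\{x\}$ ($x\in X$) together with these sets form a base of a topology $\tau_X$ on $X^0$: the only nontrivial intersection, of two neighbourhoods of $0$, is again of the same form because $(h^0)^{-1}(V_1)\cap(h^0)^{-1}(V_2)=(h^0)^{-1}(V_1\cap V_2)$. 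Moreover $(h^0)^{-1}(V)$ is $\tau_X$-open for \emph{every} $\tau_Y$-open $V$ (if $0\in V$ it is a basic neighbourhood of $0$; otherwise it is a subset of $X$, hence open as a union of isolated points), so $h^0:(X^0,\tau_X)\to(Y^0,\tau_Y)$ is continuous.

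It then remains to verify three properties of $\tau_X$. First, $\tau_X$ is Hausdorff: distinct points of $X$ are separated since each is isolated, while $0$ and a point $x\in X$ are separated by $\{x\}$ and $(h^0)^{-1}(V)$, where $V$ is any $\tau_Y$-open neighbourhood of $0$ with $h(x)\notin V$, furnished by the Hausdorffness of $\tau_Y$. Second, $0$ is non-isolated in $(X^0,\tau_X)$, so $X$ is not closed: every basic neighbourhood $(h^0)^{-1}(V)$ of $0$ contains some $y\in V\cap Y$ by non-isolation of $0$ in $Y^0$, and by surjectivity of $h$ some $x\in X$ has $h(x)=y$, whence $x\in(h^0)^{-1}(V)$. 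Third, $\tau_X$ is a semigroup topology: to check continuity of multiplication at $(a,b)\in X^0\times X^0$, note that if $a,b\in X$ then $ab\in X$ is isolated and the open sets $\{a\},\{b\}$ satisfy $\{a\}\{b\}=\{ab\}$; the only remaining case is $a=0$ or $b=0$, where $ab=0$, and there I pull back the continuity of multiplication in $(Y^0,\tau_Y)$ at $(h^0(a),h^0(b))$ through $h^0$, using that $h^0$-preimages of $\tau_Y$-open sets are $\tau_X$-open.

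This produces a Hausdorff semigroup topology on $X^0$ in which $X$ is not closed, contradicting the zero-closedness of $X$; hence $Y$ is zero-closed. I expect the main obstacle to be exactly the Hausdorffness: the pullback topology is the obvious vehicle for transporting a witness against $Y$ into a witness against $X$, but it collapses the non-injectivity of $h$, so one must enrich it with the isolated points of $X$ while simultaneously confirming that the enriched basic sets are genuinely open, that $0$ remains non-isolated, and that multiplication stays continuous, this last verification splitting cleanly into the trivial case $a,b\in X$ and the case where one factor equals $0$.
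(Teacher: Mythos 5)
Your proposal is correct and follows essentially the same route as the paper: the paper's proof uses exactly the base $\big\{\{x\}:x\in X\big\}\cup\big\{\{0\}\cup h^{-1}(U):0\in U\in\tau^0_Y\big\}$ on $X^0$, which coincides with your base of isolated points together with the sets $(h^0)^{-1}(V)$. You have merely supplied the verifications (base property, Hausdorffness, non-isolation of $0$ via surjectivity, continuity of multiplication) that the paper leaves as an ``observe that''.
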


\begin{proof} Assuming that $Y$ is not zero-closed, we conclude that $Y$ is not closed in its $0$-extension $Y^0=\{0\}\cup Y$, endowed with some Hausdorff semigroup topology $\tau^0_Y$. Consider the topology $\tau_X^0$ on $X^0$, generated by the base
$$\big\{\{x\}:x\in X\big\}\cup\big\{\{0\}\cup h^{-1}(U):0\in U\in\tau^0_Y\big\},$$
and observe that $\tau_X^0$ is a Hausdorff semigroup topology on $X^0$ and $X$ is not closed in $(X^0,\tau_X^0)$, which is not possible as $X$ is zero-closed. This contradiction shows that the semigroup $Y$ is zero-closed.
\end{proof}

The following Lemma proves Theorem~\ref{t:MA}(2).

\begin{lemma}\label{l:MA2} Assume that $\cov(\M)=\mathfrak c$. Each zero-closed compact Hausdorff topological semigroup $X$ is polybounded.
\end{lemma}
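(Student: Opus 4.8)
The plan is to realize the compact semigroup $X$ as the limit of a $\sigma$-directed inverse spectrum of compact \emph{metrizable} topological semigroups, show that each factor of this spectrum is polybounded by reducing it to the metric case already settled in Lemma~\ref{l:MA1}, and then conclude via Theorem~\ref{t:lim}. First I would build the spectrum. Let $C(X)$ be the algebra of continuous real functions on $X$, and for $f\in C(X)$ and $a,b\in X^1$ put $f_{a,b}(x)=f(axb)$. The key point is that, by the joint continuity of the multiplication together with the compactness of $X$ (equivalently, the exponential law for the sup-norm topology on $C(X)$), the map $X^1\times X^1\to (C(X),\|\cdot\|_\infty)$, $(a,b)\mapsto f_{a,b}$, is norm-continuous; hence the orbit $\{f_{a,b}:a,b\in X^1\}$ is norm-compact and therefore norm-separable. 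Consequently, for each $f\in C(X)$ the smallest closed subalgebra $A_f\subseteq C(X)$ containing $f$ and the constants and closed under all translations $g\mapsto g_{a,b}$ (where $g_{a,b}(x)=g(axb)$) is separable. Since $A_f$ is translation-invariant on both sides, the closed equivalence relation $R_f\defeq\{(x,y):\forall g\in A_f\ \ g(x)=g(y)\}$ is a congruence, and $X/R_f$, being determined by a countable dense subset of $A_f$, is a compact metrizable topological semigroup.

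Because $X$ is compact Hausdorff, $C(X)$ separates points, so the congruences $R_f$ separate the points of $X$. Finite and countable intersections of such congruences are again closed congruences whose quotients embed into (countable) products of compact metrizable semigroups and hence remain compact metrizable; their defining algebras are again separable and translation-invariant. Therefore the family of closed congruences $R$ with $X/R$ compact metrizable is $\sigma$-directed under reverse inclusion and separates points, which means that $X$ is topologically isomorphic to the limit $\lim\Sigma$ of the $\sigma$-directed inverse spectrum $\Sigma$ of the quotients $X/R$ together with the induced bonding homomorphisms.

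Next I would prove that each factor $X_R\defeq X/R$ is polybounded. As a surjective continuous homomorphic image of the zero-closed semigroup $X$, the semigroup $X_R$ is zero-closed by Lemma~\ref{l:zp}. Fix any compatible metric $\rho$ on the compact metrizable semigroup $X_R$ and set $d(x,y)=\sup_{a,b\in X_R^1}\rho(axb,ayb)$. By construction $d$ is subinvariant and $d\ge\rho$ (take $a=b=1$), so $d$ is a metric; moreover the uniform continuity of the multiplication on the compact semigroup $X_R$ forces $(x,y)\mapsto d(x,y)$ to be continuous with respect to $\rho$, so $d$ and $\rho$ induce the same topology, which is compact and hence complete and separable. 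Thus $d$ is a separable complete subinvariant metric on $X_R$, and Lemma~\ref{l:MA1} (that is, Theorem~\ref{t:MA}(1)) shows that $X_R$ is polybounded.

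Finally, since $X$ is compact, every finite power $X^n$ is compact and hence Lindel\"of, so $X$ is power-Lindel\"of; applying Theorem~\ref{t:lim} to the $\sigma$-directed inverse spectrum $\Sigma$ of polybounded compact metrizable Hausdorff semigroups with $\lim\Sigma\cong X$ yields that $X$ is polybounded. I expect the main obstacle to be the first step, namely verifying that a compact Hausdorff topological semigroup really is the limit of a $\sigma$-directed inverse spectrum of compact metrizable topological semigroups. The purely topological version of this fact is routine, but here the quotients must respect the multiplication, so one must produce \emph{point-separating closed congruences} with metrizable quotients; this is exactly where the norm-separability of the translation-invariant subalgebras $A_f$ — itself a consequence of the norm-continuity of $(a,b)\mapsto f_{a,b}$ coming from the uniform continuity of multiplication on a compact semigroup — does the essential work.
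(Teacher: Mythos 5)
Your proposal is correct and follows essentially the same route as the paper: realize $X$ as the limit of a $\sigma$-directed inverse spectrum of compact metrizable quotient semigroups (each zero-closed by Lemma~\ref{l:zp} and polybounded by Lemma~\ref{l:MA1} via a separable complete subinvariant metric of the form $\sup_{a,b}\rho(axb,ayb)$), then apply Theorem~\ref{t:lim}. The only difference is bookkeeping: the paper indexes the spectrum directly by continuous subinvariant pseudometrics $\alpha(x,y)=\max_{a,b\in X^1}|f(axb)-f(ayb)|$, while you index by translation-invariant separable closed subalgebras of $C(X)$, which amounts to the same construction.
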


\begin{proof} Let $D$ be the set of continuous subinvariant pseudometrics on the compact topological semigroup $X$. Each pseudometric $\alpha\in D$ determines the congruence $\alpha^{-1}(0)$ on the semigroup $X$. Let $X_\alpha$ be the quotient semigroup of $X$ by the congruence $\alpha^{-1}(0)$, and $\pi_\alpha:X\to X_\alpha$ be the quotient homomorphism. The pseudometric $\alpha$ induces a metric $\check\alpha$ on $X_\alpha$ such that $\check\alpha(\pi_\alpha(x),\pi_\alpha(y))=\alpha(x,y)$ for any $x,y\in X$. The continuity of the pseudometric $\alpha$ and the compactness of $X$ implies that the topology on $X_\alpha$ generated by the metric $\check\alpha$ coincides with the quotinent topology induced by the quotient homomorphism $\pi_\alpha:X\to X_\alpha$. The compactness of $X_\alpha$ ensures that the subinvariant metric $\check\alpha$ on $X_\alpha$ is separable and complete.
The subinvariance of the meric $\check\alpha$ implies that $X_\alpha$ is a topological semigroup. Talking about meric properties of the semigroup $X_\alpha$ we always will have in mind the metric $\check \alpha$ on $X_\alpha$.

By Lemma~\ref{l:zp}, the semigroup $X_\alpha$ is zero-closed and by Lemma~\ref{l:MA1}, $X_\alpha$ is polybounded.

Given two pseudometrics $\alpha,\beta\in D$ we write $\alpha\preceq\beta$ if there exists a positive real number $C$ such that $\alpha(x,y)\le C\cdot\beta(x,y)$ for all $x,y\in X$. This inequality implies the inclusion $\beta^{-1}(0)\subseteq\alpha^{-1}(0)$ which allows us to define a (unique) homomorphism $\pi^\beta_\alpha:X_\beta\to X_\alpha$ such that $\pi_\alpha=\pi^\beta_\alpha\circ\pi_\beta$. The definition of the relation $\alpha\preceq\beta$ implies that the homomorphism $\pi^\beta_\alpha:X_\beta\to X_\alpha$ is Lipschitz and hence continuous.

Let us show that the poset $D$ is $\sigma$-directed. Indeed, given any countable set $\{\alpha_n\}_{n\in\w}\subseteq D$, for every $n\in\w$ choose a positive real number $c_n$ such that $c_n\cdot \sup_{x,y\in X}\alpha_n(x,y)\le\frac1{2^n}$. Then the pseudometric $\alpha=\max_{n\in\w}c_n\alpha_n$ on $X$ is continuous and subinvariant, and hence belongs the directed set $D$. It is clear that $\alpha_n\preceq\alpha$ for every $n\in\w$. This means the directed set $D$ is $\sigma$-directed.

Now we see that $\Sigma=(X_\alpha,\pi_\alpha^\beta:\alpha\preceq\beta\in D)$ is a $\sigma$-directed spectrum of compact metrizable topological semigroups. Then its limit $\lim\Sigma$ is a compact Hausdorff space, being a closed subspace of the Tychonoff product $\prod_{\alpha\in D}X_\alpha$ of compact metrizable spaces. For every $\alpha\in D$, let $\pr_\alpha:\lim\Sigma\mapsto X_\alpha$ be the $\alpha$-th coordinate projection.

Since each semigroup $X_\alpha$, $\alpha\in D$, is polybounded, the compact topological semigroup $\lim \Sigma$ is polybounded by Theorem~\ref{t:lim}.

It remains to show that the semigroup $X$ is isomorphic to $\lim\Sigma$. The quotient homomorphisms $\pi_\alpha:X\to X_\alpha$, $\alpha\in D$, determine a continuous homomorphism $\pi:X\to\lim\Sigma$, $\pi:x\mapsto(\pi_\alpha(x))_{\alpha\in D}$, such that $\pi_\alpha=\pr_\alpha\circ\pi$ for every $\alpha\in D$. To show that $\pi$ is injective, take any distinct points $x,y\in X$ and choose any continuous function $f:X\to \IR$ such that $f(x)\ne f(y)$. By the compactness of $X$ and the continuity of the semigroup operation on $X$, the formula
$$\alpha(x,y)=\max_{a,b\in X^1}|f(axb)-f(ayb)|$$
determines a continuous subinvariant pseudometric on $X$ such that $\alpha(x,y)\ge |f(x)-f(y)|>0$. Then for the quotient homomorphism $\pi_\alpha:X\to X_\alpha$ we get $\pi_\alpha(x)\ne\pi_\alpha(y)$ and hence $\pi(x)\ne \pi(y)$.  Therefore, the map $\pi:X\to \lim\Sigma$ is injective.

Assuming that $\pi$ is not surjective, we can use the closedness of the compact  set $\pi[X]$ in $\lim\Sigma\subseteq\prod_{\alpha\in D}X_\alpha$ and find  $\alpha\in D$ and an open set $V\subseteq X_\alpha$ such that $\pi[X]\cap \pi^{-1}_\alpha[V]\ne\emptyset$. Then $V\ne\emptyset$ and by the surjectivity of the quotient homomorphism $\pi_\alpha:X\to X_\alpha$, there exists $x\in X$ such that $\pi_\alpha(x)\in V$ and hence $\pi(x)\in \pr_\alpha^{-1}(\pi_\alpha(x))\in \pr_\alpha^{-1}[V]$, which contradicts $\pi[X]\cap\pr_\alpha^{-1}[V]=\emptyset$. This contradiction shows that $\pi$ is surjective and hence bijective. Therefore, $\pi:X\to\lim\Sigma$ is an isomorphism and the polyboundedness of the semigroup $\lim\Sigma$ implies the polyboundedness of $X$.

\end{proof}

\section{Polyfinite semigroups}\label{s:polyfinite}

We recall that a semigroup $X$ is {\em polyfinite} if there exist $n\in\IN$ and a finite set $F\subseteq X$ such that for every $x,y\in X$ there exists a semigroup polynomial $f:X\to X$ of degree $\le n$ such that $\{f(x),f(y)\}\subseteq F$.

\begin{lemma}\label{l:pb=>pf} Each polybounded semigroup is polyfinite.
\end{lemma}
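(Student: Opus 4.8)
The plan is to extract a finite witnessing decomposition from polyboundedness and combine its polynomials by composition. Since $X$ is polybounded, I may write $X=\bigcup_{i=1}^m f_i^{-1}(b_i)$ for finitely many semigroup polynomials $f_1,\dots,f_m$ and points $b_1,\dots,b_m\in X$. For $x\in X$ put $I(x)=\{i\le m: f_i(x)=b_i\}$; polyboundedness says exactly that $I(x)\neq\emptyset$ for every $x$. The naive choice $F=\{b_1,\dots,b_m\}$ almost works: for any $x$ and any $i\in I(x)$ the single polynomial $f_i$ sends $x$ into $F$. The obstacle is that polyfiniteness demands \emph{one} polynomial that sends both $x$ and $y$ into $F$ simultaneously, and an index $i\in I(x)$ capturing $x$ need not lie in $I(y)$, so $f_i(y)$ may escape $F$.

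To overcome this I would use a two-step composition. Given $x,y\in X$, first pick $i\in I(x)$, so that $f_i(x)=b_i$, and set $y'=f_i(y)$. Then pick $j\in I(y')$, so that $f_j(y')=b_j$. Consider the composite $f=f_j\circ f_i$, which is again a semigroup polynomial because the class of semigroup polynomials is closed under composition (substituting one polynomial into another merely multiplies and concatenates coefficients, all of which stay in $X^1$). Now $f(x)=f_j(f_i(x))=f_j(b_i)$ and $f(y)=f_j(f_i(y))=f_j(y')=b_j$. The second value lies in $\{b_1,\dots,b_m\}$, while the first lies in the set $\{f_j(b_i): 1\le i,j\le m\}$ of at most $m^2$ points, which is independent of $x,y$. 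Hence it suffices to take
$$F=\{b_1,\dots,b_m\}\cup\{f_j(b_i): 1\le i,j\le m\},$$
a finite subset of $X$, to guarantee $\{f(x),f(y)\}\subseteq F$.

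Finally I would bound the degree. If $N=\max_{i\le m}\deg(f_i)$, then $\deg(f_j\circ f_i)=\deg(f_i)\cdot\deg(f_j)\le N^2$, since each of the $\deg(f_j)$ occurrences of the variable in $f_j$ is replaced by a copy of $f_i$, which carries $\deg(f_i)$ occurrences of the variable. Thus $n=N^2$ together with the finite set $F$ above witnesses the polyfiniteness of $X$. I expect the only genuinely delicate point to be the asymmetric role of the two compositional factors: $f_i$ must be chosen first to capture $x$ (freezing its image at $b_i$), and only afterwards is $f_j$ chosen to capture the moved point $f_i(y)$; the reason this does not spoil the $x$-coordinate is precisely that $f_j(b_i)$ ranges over a fixed finite set as $i,j$ vary. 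Checking that the composition of polynomials is again a polynomial and recomputing its degree are routine.
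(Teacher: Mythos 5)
Your proposal is correct and follows essentially the same argument as the paper: both choose $i$ with $f_i(x)=b_i$, then $j$ with $f_j(f_i(y))=b_j$, use the composite $f_j\circ f_i$, and take $F=\{b_i\}\cup\{f_j(b_i)\}$ with the degree bounded by the maximal degree of a two-fold composition. No gaps.
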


\begin{proof}  Since $X$ is polybounded, there exist elements $b_0,\dots,b_{n-1}\in X$ and semigroup polynomials $f_0,\dots,f_{n-1}$ on $X$ such that $X=\bigcup_{i\in n}f_i^{-1}(b_i)$. Let $$F\defeq\{b_i\}_{i\in n}\cup\{f_i(b_j):i,j\in n\}\quad\mbox{and}\quad d\defeq\max\{\deg(f_i\circ f_j):i,j\in n\}.$$ Given any elements $x,y\in X$, find $i\in n$ such that $f_i(x)=b_i$ and then find $j\in n$ such that $f_j(f_i(y))=b_j$. The semigroup polynomial $f=f_j\circ f_i:X\to X$ has degree $\le d$ and $\{f(x),f(y)\}=\{f_j(f_i(x)),f_j(f_i(y))\}=\{f_j(b_i),b_j\}\subseteq F$, witnessing that $X$ is polyfinite.
\end{proof}

A permutation $\pi:X\to X$ of a set $X$ is called {\em finitely supported} if it has finite {\em support} $\sup(\pi)=\{x\in X:\pi(x)\ne x\}$. It is clear that the set $FS_X$ of all finitely supported permutations of the set $X$ is a group with respect of the operation of composition of permutations.

\begin{example}\label{ex:AR} For any infinite set $X$, the group $FS_X$ of all finitely supported permutations of $X$ is polyfinite but not polybounded.
\end{example}

\begin{proof} First, we show that $FS_X$ is  polyfinite. Given any finitely supported permutations  $\pi_1,\pi_2\in FS_X$, consider a semigroup polynomial $$f:FS_X\to FS_X,\quad f:x\mapsto \pi_1^{-1}yc\pi_1^{-1}yc^{-1},$$ where $c \in FS_X$ is a permutation such that $c\pi_1^{-1}\pi_2 c^{-1}=(\pi_1^{-1}\pi_2)^{-1}$. In this case $$f(\pi_1)=\pi_1^{-1}\pi_1 c\pi_1^{-1}\pi_1 c^{-1} = 1\mbox{ and }f(\pi_2)=\pi_1^{-1}\pi_2 c\pi_1^{-1}\pi_2 c^{-1}=\pi_1^{-1}\pi_2 (\pi_1^{-1}\pi_2)^{-1}=1,$$ witnessing that the group $FS_X$ is polyfinite.
\smallskip

Next, we show that the group $FS_X$ is not polybounded. Indeed, in the opposite case, $FS_{X}=\bigcup_{i\in n}f_i^{-1}(b_i)$ for some elements $b_0,\dots,b_{n-1}\in FS_X$ and semigroup polynomials $f_0,\dots,f_{n-1}$ on $FS_X$. For every $i\in n$ find a number $m_i\in \IN$ and elements $c_{i,0},\cdots c_{i,m_i}\in FS_X$ such that $f_i(y)=c_{i,0}yc_{i,1}y\cdots yc_{i,m_i}$ for all $y\in FS_X$. Consider the finite set $S=\bigcup_{i\in n}\big(\supp(b_i)\cup\bigcup_{j=0}^{m_{i}}\supp(c_{i,j})\big)$ and the number $m=\max_{i\in n}m_i$. Choose any sequence $(x_0,\dots, x_{m})$ of pairwise distinct points of the set $X\setminus S$. Let $\pi:X\to X$ be the permutation of $X$ defined by
$$\pi(x)=\begin{cases}x_{i+1}&\mbox{if $x=x_i$ for some $i<m$};\\
x_0&\mbox{if $x=x_m$};\\
x&\mbox{otherwise}.
\end{cases}
$$  Since $\pi\in FS_X\in \bigcup_{i\in n}f^{-1}(b_i)$, there exists $i\in n$ such that $f_i(\pi)=b_i$. 
It follows from $x_0\notin \supp(b_i)$ that $b_i(x_0)=x_0$. On the other hand, $\{x_0,\dots,x_{m}\}\cap \bigcup_{j=0}^{m_i}\supp(c_{i,j})=\emptyset$ implies $f_i(\pi)(x_0)=\pi^{m_i}(x_0)=x_{m_i}\ne x_0=b_i(x_0)$ and hence $f_i(\pi)\ne b_i$. This contradiction completes the proof.
\end{proof}

\begin{theorem}\label{t:lim-polyfinite} A compact topological semigroup $X$ is polyfinite if $X$ is topologically isomorphic to the limit $\lim\Sigma$ of a $\sigma$-directed inverse spectrum $\Sigma=(X_\alpha,\pi^\beta_\alpha:\alpha\preceq\beta\in D)$ consisting of polyfinite Hausdorff topological semigroups.
\end{theorem}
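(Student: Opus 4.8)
The plan is to transcribe the proof of Theorem~\ref{t:lim}, replacing the covering data of polyboundedness by the pair $(n,F)$ of polyfiniteness, with the decisive extra care that the finite set $F$ must not depend on the two points being tested. Identify $X$ with $\lim\Sigma$, write $\pi_\alpha:\lim\Sigma\to X_\alpha$ for the coordinate projections and $\bar\pi_\alpha:(\lim\Sigma)^1\to X_\alpha^1$ for their extensions with $\bar\pi_\alpha(1)=1$, and for each $\alpha\in D$ fix a witness of polyfiniteness: a number $n_\alpha\in\IN$ and a finite set $F_\alpha\subseteq X_\alpha$. First I would uniformize the two numerical parameters. By the argument proving Claim~\ref{cl:finite}, applied to the $\IN\times\IN$-valued map $\alpha\mapsto(n_\alpha,|F_\alpha|)$, there exist $n,N\in\IN$ such that $A\defeq\{\alpha\in D:(n_\alpha,|F_\alpha|)=(n,N)\}$ is cofinal in $D$. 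For $\alpha\in A$ I enumerate $F_\alpha=\{b_{\alpha,1},\dots,b_{\alpha,N}\}$, choose lifts $\hat b_{\alpha,i}\in\pi_\alpha^{-1}(b_{\alpha,i})$, and form $\hat s_\alpha\defeq(\hat b_{\alpha,i})_{i\le N}\in X^N$.

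Next I would fix the finite set $F$ once and for all, before examining any pair $(x,y)$. Since $X$ is compact, so is $X^N$, and exactly as in Claim~\ref{cl:s} the net $(\hat s_\alpha)_{\alpha\in A}$ admits a cofinal cluster point $s=(b_1,\dots,b_N)\in X^N$, meaning that for every neighborhood $W$ of $s$ and every $\delta\in D$ there is $\alpha\in A\cap{\uparrow}\delta$ with $\hat s_\alpha\in W$. I set $F\defeq\{b_1,\dots,b_N\}$ and claim that $(n,F)$ witnesses the polyfiniteness of $X$. Now fix arbitrary $x,y\in X$. For each $\alpha\in A$ I apply the polyfiniteness of $X_\alpha$ to $\pi_\alpha(x),\pi_\alpha(y)$, obtaining a semigroup polynomial on $X_\alpha$ of some degree $m(\alpha)\le n$ whose coefficients I lift to $X^1$ and record (padding with $1$'s) as a tuple $\hat c^\alpha\in(X^1)^{n+1}$, together with indices $i(\alpha),j(\alpha)\le N$ so that the lifted polynomial $\hat f_\alpha\defeq f_{m(\alpha),\hat c^\alpha}$ satisfies $\pi_\alpha(\hat f_\alpha(x))=\pi_\alpha(\hat b_{\alpha,i(\alpha)})$ and $\pi_\alpha(\hat f_\alpha(y))=\pi_\alpha(\hat b_{\alpha,j(\alpha)})$.

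The heart of the argument is the extraction of a suitable limit. In the compact space $Z\defeq\{0,\dots,n\}\times\{1,\dots,N\}^2\times(X^1)^{n+1}\times X^N$ put $z_\alpha\defeq(m(\alpha),i(\alpha),j(\alpha),\hat c^\alpha,\hat s_\alpha)$. I need a cofinal cluster point $z_*=(m_*,i_*,j_*,c_*,s)$ of $(z_\alpha)_{\alpha\in A}$ whose final coordinate is the prescribed tuple $s$. This I would obtain by a finite-intersection argument: writing $A_W=\{\alpha\in A:\hat s_\alpha\in W\}$ for a neighborhood $W$ of $s$, the sets $A_W\cap{\uparrow}\delta$ are cofinal, and finite intersections of such index sets are again cofinal, hence nonempty, so the closures of $\{z_\alpha:\alpha\in A_W\cap{\uparrow}\delta\}$ form a family of nonempty compacta with the finite intersection property; a point $z_*$ of their total intersection has its $X^N$-coordinate in $\overline W$ for every $W$, and regularity of the compact Hausdorff space $X^N$ forces this coordinate to equal $s$. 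Since the degree and index coordinates range over finite sets, any neighborhood of $z_*$ can be chosen inside the clopen slice over $(m_*,i_*,j_*)$, so these values are realized along the cluster point with $m_*\le n$.

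Finally I would identify the limit polynomial $f\defeq f_{m_*,c_*}$, of degree $m_*\le n$, and show $f(x)=b_{i_*}\in F$ and $f(y)=b_{j_*}\in F$, which finishes the proof as $x,y$ were arbitrary. To see $f(x)=b_{i_*}$, suppose not; as $X\subseteq\prod_\gamma X_\gamma$ is Hausdorff, $f(x)$ and $b_{i_*}$ differ in some coordinate $\gamma_0$. The map $Z\to X_{\gamma_0}\times X_{\gamma_0}$ sending $z$ to the pair consisting of the $\gamma_0$-coordinate of its polynomial's value at $x$ and the $\gamma_0$-coordinate of its $i$-th target entry is continuous (the discrete coordinates split $Z$ into clopen pieces on each of which multiplication gives continuity), and it separates this pair at $z_*$, so a whole neighborhood of $z_*$ maps off the diagonal of $X_{\gamma_0}\times X_{\gamma_0}$. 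Applying the cofinal cluster property with $\delta=\gamma_0$ produces $\alpha\in A\cap{\uparrow}\gamma_0$ with $z_\alpha$ in that neighborhood; but $\alpha\succeq\gamma_0$ together with $\pi_\alpha(\hat f_\alpha(x))=\pi_\alpha(\hat b_{\alpha,i(\alpha)})$ yields, after applying $\pi^\alpha_{\gamma_0}$, equality of the two $\gamma_0$-coordinates, a contradiction. The main obstacle I anticipate is exactly the third step, namely forcing the target coordinate of the cluster point to be the fixed tuple $s$ so that $F$ stays independent of $(x,y)$; the remainder is a faithful, if bookkeeping-heavy, adaptation of the proof of Theorem~\ref{t:lim}, with the degree carried as an extra discrete coordinate so that polynomials of different degrees, which are now permitted since the degree may depend on $x,y$, are handled uniformly.
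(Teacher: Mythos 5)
Your proof is correct, and its skeleton coincides with the paper's: uniformize $(n_\alpha,|F_\alpha|)$ on a cofinal set $A$, fix the target set $F$ as a limit of the lifted sets $F_\alpha$ \emph{before} the pair $(x,y)$ is chosen, then for a given pair take a compatible limit of the (lifted, degree-bounded) polynomial data and kill the alternative $f(x)\ne b_{i_*}$ by separating a single coordinate $\gamma_0$ and projecting along $\pi^\alpha_{\gamma_0}$. Where you genuinely diverge is the mechanism for extracting coherent limits. The paper fixes an ultrafilter $\U$ on $D$ extending the filter generated by the tails $A\cap{\uparrow}\alpha$, takes $\U$-limits of the nets $(\hat b_{\alpha,i})_i$ and $(\hat a_{\alpha,i})_i$ in the compact powers of $X$, and handles the variable degree by choosing $d$ with $A_d\in\U$; coherence of all these limits is then automatic because any finite collection of approximating index sets lies in $\U$ and is therefore cofinal. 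You instead bundle the degree, the two target indices, the padded coefficient tuple and the tuple of lifted targets into one compact space $Z$, and produce a single cofinal cluster point $z_*$ whose $X^N$-coordinate is forced to equal the pre-chosen $s$ by the finite-intersection-property argument with the closures of $\{z_\alpha:\alpha\in A_W\cap{\uparrow}\delta\}$ and regularity of $X^N$; the discrete coordinates are then stabilized on a clopen slice. This buys you an ultrafilter-free (hence somewhat more elementary) argument at the cost of the explicit FIP bookkeeping that the ultrafilter does silently; the verification that finite intersections are nonempty is exactly where the cofinality of $s$ and the directedness of $D$ enter, and you have carried it out correctly. One cosmetic point: your appeal to ``exactly as in Claim~\ref{cl:s}'' for the existence of the cofinal cluster point $s$ should really invoke compactness of $X^N$ (every net in a compact space clusters), since Claim~\ref{cl:s} itself runs on the power-Lindel\"of property and $\sigma$-directedness; this is what the paper's Claim~\ref{cl2:s} does.
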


\begin{proof} We shall identify $X$ with the limit $\lim\Sigma$ of the inverse spectrum $\Sigma$. For every $\alpha\in D$ let $$\pi_\alpha:\lim \Sigma\to X_\alpha,\quad\pi_\alpha:(x_\delta)_{\delta\in D}\mapsto x_\alpha,$$be the coordinate projection. Let $\bar \pi_\alpha:(\lim \Sigma)^1\to X_\alpha^1$ be the unique extension of $\pi_\alpha$ such that $\bar\pi_\alpha(1)=1$.

Since $X_\alpha$ is polyfinite, there exist a number $n_\alpha\in\IN$ and a finite set $F_\alpha\subseteq X_\alpha$ such that for any $x,y\in X_\alpha$ there exists a semigroup polynomial $f_\alpha:X_\alpha\to X_\alpha$ of degree $\le n_\alpha$ such that $\{f_\alpha(x),f_\alpha(y)\}\subseteq F_\alpha$.
Let $m_\alpha=|F_\alpha|$ and choose elements $b_{\alpha,1},\dots,b_{\alpha,m_\alpha}\in X_\alpha$ such that $\{b_{\alpha,1},\dots,b_{\alpha,m_\alpha}\}=F_\alpha$. For every $i\in\{1,\dots,m_\alpha\}$ choose an element $\hat b_{\alpha,i}\in X$ such that $\pi_\alpha(\hat b_{\alpha,i})=b_{\alpha,i}$.

Repeating the argument from Claim~\ref{cl:finite}, we can find numbers $n,m\in\IN$ such that the set $$A\defeq\{\alpha\in D:(n_\alpha,m_\alpha)=(n,m)\}$$ is cofinal in $D$.

A family $\U$ of nonempty subsets of $D$ is called a {\em filter on $D$} if
\begin{itemize}
\item for any $U,V\in\U$ we have $U\cap V\in\U$;
\item for any sets $V\subseteq W\subseteq D$, the inclusion $V\in \U$ implies $W\in\U$.
\end{itemize}
A filter $\U$ on $D$ is called an {\em ultrafilter} if $\U=\V$ for every filter $\V$ on $D$ with $\U\subseteq \V$.
The Kuratowski--Zorn Lemma implies that every filter on $D$ can be enlarged to an ultrafilter. In particular, the filter $\{F\subseteq D:\exists \alpha\in D\;\;A\cap{\uparrow}\alpha\subseteq F\}$ can be enlarged to some ultrafilter $\U$ on $D$.

\begin{claim}\label{cl2:s} There exists a sequence $b=(b_1,\dots,b_m)\in X^m$ such that for every neighborhood $W$ of $b$ in $X^m$ the set $\{\alpha\in A:(\hat b_{\alpha,1},\dots,\hat b_{\alpha,m})\in W\}$ belongs to the ultrafilter $\U$.
\end{claim}

\begin{proof} To derive a contradiction, assume that for every $b\in X^m$ there exists an open neighborhood $W_b\subseteq X^m$ of $b$ such that the set $A_b=\{\alpha\in A:(\hat b_{\alpha,1},\dots,\hat b_{\alpha,m})\in W_b\}$ does not belong to the ultrafilter $\U$. By the maximality of $\U$, there exists a set $U_b\in\U$ such that the intersection $A_b\cap U_b$ is empty (in the opposite case we could enlarge the filter $\U$ to a strictly larger filter $\{W\subseteq D:\exists U\in\U\;\;(U\cap A_b\subseteq W)\}$).
By the compactness of the space $X^m$, there exists a finite set $B\subseteq X^m$ such that $X^m\subseteq \bigcup_{b\in B}W_b$. Consider the (nonempty) set $U_B=\bigcap_{b\in B}U_b\in\U$ and choose any element $\alpha\in U_B\cap A$. Since $(\hat b_{\alpha,1},\dots,\hat b_{\alpha,m})\in X^m=\bigcup_{b\in B}W_b$, there exists $b\in B$ such that $(\hat b_{\alpha,1},\dots,\hat b_{\alpha,m})\in W_b$. Then $\alpha\in A_b\cap U_b=\emptyset$, which is a contradiction completing the proof of the claim.
\end{proof}

Consider the finite set $F=\{b_1,\dots,b_m\}$ in $X$. We claim that for every points $\hat x,\hat y\in X$ there exists a semigroup polynomial $f:X\to X$ such that $\{f(\hat x),f(\hat y)\}\subseteq F$.

Given any $\hat x,\hat y\in X$, for every $\alpha\in D$ find a semigroup polynomial $f_\alpha:X_\alpha\to X_\alpha$ of degree $d_\alpha\le n_\alpha$ such that $\{f_\alpha(\pi_\alpha(\hat x)),f_\alpha(\pi_\alpha(\hat y))\}\subseteq F_\alpha$. Next, find elements $a_{\alpha,0},\dots,a_{\alpha,d_\alpha}\in X_\alpha$ such that $f_\alpha(x)=a_{\alpha,0}xa_{\alpha,1}x\cdots xa_{\alpha,d_\alpha}$ for every $x\in X_\alpha$. For every $i\in\{0,\dots,d_\alpha\}$, choose an element $\hat a_{\alpha,i}\in \pi_\alpha^{-1}(a_{\alpha,i})$, and consider the semigroup polynomial
$$\hat f:X\to X,\quad \hat f:x\mapsto \hat a_{\alpha,0}x\hat a_{\alpha,1}x\cdots x\hat a_{\alpha,m}.$$

For every $d\le n$ consider the set $A_d=\{\alpha\in A:d_\alpha=d\}$. Since $A=\bigcup_{d=1}^nA_d\in\U$, there exists $d\in\{1,\dots,n\}$ such that $A_d\in\U$.

Repeating the argument from Claim~\ref{cl2:s}, we can find a sequence $(a_0,\dots,a_d)\in X^{d+1}$ such that for every neighborhood $W$ of $(a_0,\dots,a_d)$ in  $X^{d+1}$, the set $\{\alpha\in D:(\hat a_{\alpha,0},\dots,\hat a_{\alpha,d})\in W\}$ belongs to the ultrafilter $\U$.

Consider the semigroup polynomial $f:X\to X$, $f:x\mapsto a_0xa_1x\cdots xa_d$.
We claim that $\{f(\hat x),f(\hat y)\}\subseteq F$. Assuming that $f(\hat x)\notin F$, we can find open sets $W$ and $V_1,\dots,V_m$ in $X$ such that $W\cap\bigcup_{i=1}^mV_i=\emptyset$, $f(\hat x)\in W$, and $b_i\in V_i$ for every $i\in\{1,\dots, m\}$. By the continuity of the semigroup operation on the topological semigroup $X$, there exist open sets  $W_{0},W_{1},\dots,W_{d}$ in $X$ such that $(a_{0},\dots,a_{d})\in \prod_{i=0}^{d}W_i$ and $W_0\hat xW_1\hat x\cdots \hat xW_d\subseteq W$.

By the definition of the Tychonoff product topology on $X=\lim\Sigma$, there exists an index $\alpha'\in D$ and a sequence of open sets $(V'_i)_{i=1}^m$ in $X_{\alpha'}$ such that $$(b_1,\dots,b_m)\textstyle \in V'\defeq \prod_{i=1}^m\pi_{\alpha'}^{-1}[V'_i]\subseteq\prod_{i=1}^m V_i.$$ By the choice of the sequences $(b_1,\dots,b_m)$ and $(a_0,\dots,a_d)$, the set $$\textstyle A'\defeq\bigcap_{i=1}^m\{\alpha\in A_d:\hat b_{\alpha,i}\in \pi_\alpha^{-1}[V_i]\}\cap\bigcap_{i=0}^d\{\alpha\in A_d:\hat a_{\alpha,i}\in W_i\}$$ belongs to the ultrafilter $\U$. Since $\emptyset \ne A'\cap {\uparrow}\alpha\in\U$ for every $\alpha\in D$, the set $A'$ is cofinal in $D$ and hence  there exists an element $\alpha\in A'$ such that $\alpha'\preceq\alpha$. Since $f_\alpha(\pi_\alpha(\hat x))\in F_\alpha$, there exits $i\in \{1,\dots,m\}$ such that $f_\alpha(\pi_\alpha(\hat x))=b_{\alpha,i}$ and hence
$\pi_\alpha(\hat f_\alpha(\hat x))=f_\alpha(\pi_\alpha(\hat x))=b_{\alpha,i}$.  Then
$$\pi_{\alpha'}(\hat f_\alpha(\hat x))=\pi^\alpha_{\alpha'}(\pi_\alpha(\hat f_\alpha(\hat x)))=\pi_{\alpha'}^\alpha(b_{\alpha,i})=\pi_{\alpha'}^\alpha(\pi_\alpha(\hat b_{\alpha,i}))=\pi_{\alpha'}(\hat b_{\alpha,i})\in V_i'$$and
$$\hat f_\alpha(\hat x)\in \pi_{\alpha'}^{-1}[V_i']\subseteq V_i\subseteq X\setminus W.$$
On the other hand,
$\hat f_\alpha(\hat x)=\hat a_{\alpha,0}\hat x\hat a_{\alpha,1}\hat x\cdots \hat x\hat a_{\alpha,d}\in W_{0}\hat xW_{1}\hat x\cdots \hat xW_{d}\subseteq W$, which is a contradiction completing the proof of the inclusion $f(\hat x)\in F$. By analogy we can prove that $f(\hat y)\in F$, witnessing that the semigroup $X$ is polyfinite.
\end{proof}

\section{Proof of Theorem~\ref{t:pf}}\label{s:pf}

We divide the proof of Theorem~\ref{t:pf} into five lemmas.

\begin{lemma}\label{l:cov-eXe} For every idempotent $e$ of a semigroup $X$ we have
$\cov(\A_X)=\cov(\A_{eXe})$.
\end{lemma}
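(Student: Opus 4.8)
The plan is to prove the two inequalities $\cov(\A_X)\le\cov(\A_{eXe})$ and $\cov(\A_{eXe})\le\cov(\A_X)$ separately, exploiting that $e$ is the identity element of the subsemigroup $eXe$ (so $ey=y=ye$ for every $y\in eXe$) and that the map $x\mapsto exe$ sends $X$ onto $eXe$ and restricts to the identity on $eXe$. The whole argument is a bookkeeping manipulation of semigroup polynomials, substituting $exe$ for the variable and absorbing the idempotent $e$ into neighbouring coefficients.

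For the inequality $\cov(\A_X)\le\cov(\A_{eXe})$ I would pull a cover back along $x\mapsto exe$. Suppose $eXe=\bigcup_{j}B_j$ is a cover by algebraic subsets $B_j=\{y\in eXe: g_j(y)=d_j\}$, where $g_j(y)=c_{j,0}\,y\,c_{j,1}\cdots y\,c_{j,m_j}$ with $c_{j,i}\in(eXe)^1$ and $d_j\in eXe$. Substituting $y=exe$ and using that $e$ is the identity of $eXe$, each product $g_j(exe)$ rewrites as a genuine semigroup polynomial $h_j(x)=(c_{j,0}e)\,x\,(e c_{j,1}e)\,x\cdots x\,(e c_{j,m_j})$ on $X$ of degree $m_j$; the crucial point is that every rewritten coefficient lies in $X^1$ (in fact in $X$): a coefficient $c_{j,i}\in eXe$ is unchanged, while a formal coefficient $c_{j,i}=1$ becomes $e\in X$ after the flanking $e$'s are absorbed. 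Hence $B_j^*\defeq\{x\in X:h_j(x)=d_j\}=\{x\in X:exe\in B_j\}$ is algebraic in $X$, and since $exe\in eXe$ for every $x\in X$, the sets $B_j^*$ cover $X$. This gives a cover of $X$ of the same cardinality as the cover of $eXe$.

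For the reverse inequality $\cov(\A_{eXe})\le\cov(\A_X)$ I would restrict a cover of $X$. Given $X=\bigcup_i A_i$ with $A_i=\{x\in X:f_i(x)=b_i\}$ and $f_i(x)=a_{i,0}\,x\,a_{i,1}\cdots x\,a_{i,n_i}$, I set $A_i'\defeq\{x\in eXe:\, e f_i(x)e=e b_i e\}$. For $x\in eXe$ one has $x=exe$, so $e f_i(x)e=(e a_{i,0}e)\,x\,(e a_{i,1}e)\cdots x\,(e a_{i,n_i}e)$ is a semigroup polynomial on $eXe$ whose coefficients $e a_{i,k}e$ lie in $(eXe)^1$ (here a formal $a_{i,k}=1$ produces the coefficient $e\in eXe$), and its value lies in $eXe$; thus $A_i'\in\A_{eXe}$. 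Moreover $A_i\cap eXe\subseteq A_i'$, because $f_i(x)=b_i$ forces $e f_i(x)e=e b_i e$. Only this inclusion (not an equality) holds, but it suffices: $eXe=\bigcup_i(A_i\cap eXe)\subseteq\bigcup_i A_i'$, and each $A_i'\subseteq eXe$, so $eXe=\bigcup_i A_i'$.

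The conceptual content is light; the step requiring the most care is the coefficient bookkeeping in both directions, namely verifying that after substituting $exe$ and collapsing the idempotent the resulting expressions are legitimate semigroup polynomials with coefficients in the correct extension monoid ($X^1$ in one direction, $(eXe)^1$ in the other), with particular attention to how the formal identity $1$ of a polynomial is converted into the element $e$. Tracking this correctly, together with the observation that an inclusion $A_i\cap eXe\subseteq A_i'$ is already enough for the covering estimate, yields both inequalities and hence the claimed equality $\cov(\A_X)=\cov(\A_{eXe})$.
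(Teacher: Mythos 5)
Your proof is correct and follows essentially the same route as the paper's: both directions are handled by absorbing the idempotent $e$ into adjacent coefficients (replacing a coefficient $a$ by $eae$, with the formal identity $1$ turning into $e$), using $exe=x$ on $eXe$ for one inequality and the surjection $x\mapsto exe$ for the other, and noting that an inclusion of the restricted algebraic sets suffices for the covering estimate. No substantive differences.
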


\begin{proof} To prove that $\cov(\A_X)\le\cov(\A_{eXe})$, fix a family $\C\subseteq\A_{eXe}$ of cardinality $|\C|=\cov(\A_{eXe})$ such that $eXe=\bigcup\C$. For every $C\in\C$, find a number $n_C$ and elements $b_C\in eXe$ and $a_{C,0},\dots,a_{C,n_C}\in (eXe)^1$ such that $C=\{x\in eXe:a_{C,0}xa_{C,1}x\cdots xa_{C,n_C}=b_C\}$. Replacing each coefficient $a_{C,i}$ by $ea_{C,i}e$, we can assume that $a_{C,i}=ea_{C,i}e\in eXe$ for all $i\in\{0,\dots,n_C\}$.
Let $\overline C\defeq\{x\in X:a_{C,0}xa_{C,1}x\cdots xa_{C,n_C}=b_C\}\in\A_X$.

We claim that $X=\bigcup_{C\in\C}\overline C$. Indeed, for any $x\in X$, there exists $C\in\C$ such that $exe\in C$ and then
$$a_{C,0}xa_{C,1}x\cdots xa_{C,n_C}=a_{C,0}exea_{C,1}exe\dots exea_{C,n_C}=b_C$$ and hence $x\in\overline C$. Therefore, $\cov(\A_X)\le|\{\overline C:C\in\C\}|\le|\C|=\cov(\A_{eXe})$.
\smallskip

To see that $\cov(\A_{eXe})\le\cov(\A_X)$, fix any cover $\C\subseteq\A_X$ of $X$ with $|\C|=\cov(\A_X)$. For every  $C\in\C$, find a number $n_C$ and elements $b_C\in X$ and $a_{C,0},\dots,a_{C,n_C}\in X^1$ such that $C=\{x\in X:a_{C,0}xa_{C,1}x\cdots xa_{C,n_C}=b_C\}$.  Let $b'_C=eb_Ce\in eXe$ and for every $i\in\{0,\dots,n_C\}$, let $a_{C,i}'= ea_{C,i}e\in eXe$. Then the set $$C'\defeq\{y\in eXe:a'_{C,0}ya'_{C,1}y\cdots ya'_{C,n_C}=b'_C\}$$ belongs to the family $\A_{eXe}$.

We claim that $eXe=\bigcup_{C\in\C}C'$. Given any $x\in eXe$, find $C\in \C$ such that $exe\in C$. Then
$$a'_{C,0}xa'_{C,1}x\cdots xa'_{C,n}=ea_{C,0}exea_{C,1}ex\dots xea_{C,n_C}e=eb_Ce=b_C'$$ and hence $x\in C'$. Therefore, $eXe=\bigcup_{C\in\C}C'$ and $$\cov(\A_{eXe})\le|\{C':C\in\C\}|\le|\C|=\cov(\A_{eXe}).$$
\end{proof}

\begin{lemma}\label{l:power} Let $X$ be a group and $V$ be a subset of $X$ such that $X=VF$ for some finite set $F\subseteq X$. Then for every $x,y\in X$ there exists a positive number $k\le |F|^2$ such that $\{x^k,y^k\}\subseteq VV^{-1}$.
\end{lemma}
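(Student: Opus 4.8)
The plan is to exploit the hypothesis $X=VF$ to produce a ``choice function'' into the finite set $F$, and then apply the pigeonhole principle to a sequence of consecutive powers. First I would fix, for every $g\in X$, a decomposition $g=vf$ with $v\in V$ and $f\in F$; this is possible precisely because $X=VF$. Recording the second coordinate gives a function $\sigma\colon X\to F$ with the property that $g\,\sigma(g)^{-1}\in V$ for every $g\in X$. The point of $\sigma$ is that whenever two elements $g,h\in X$ satisfy $\sigma(g)=\sigma(h)=a$, we automatically get $g a^{-1}\in V$ and $h a^{-1}\in V$, so that $gh^{-1}=(ga^{-1})(ha^{-1})^{-1}\in VV^{-1}$.

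The naive approach of treating $x$ and $y$ separately runs into an obstacle: a single-element pigeonhole on the powers $x^0,x^1,\dots,x^{|F|}$ yields some exponent $k_x\le|F|$ with $x^{k_x}\in VV^{-1}$, and likewise some $k_y\le|F|$ for $y$, but these exponents need not coincide, and $VV^{-1}$ is not a subgroup, so passing to a common multiple is not available. The key idea to overcome this is to pigeonhole on the \emph{product} $F\times F$ so as to synchronise the two exponents. Concretely, I would consider the map
$$\Phi\colon\{0,1,\dots,|F|^2\}\to F\times F,\qquad \Phi(i)=\big(\sigma(x^i),\sigma(y^i)\big).$$
The domain has $|F|^2+1$ elements while the codomain has only $|F|^2$, so $\Phi$ cannot be injective: there exist indices $0\le i<j\le|F|^2$ with $\Phi(i)=\Phi(j)$.

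Setting $k\defeq j-i$, we then have $1\le k\le|F|^2$, and the equality $\Phi(i)=\Phi(j)$ means $\sigma(x^i)=\sigma(x^j)$ and $\sigma(y^i)=\sigma(y^j)$. Writing $a=\sigma(x^i)$ and $b=\sigma(y^i)$, the defining property of $\sigma$ gives $x^i a^{-1},\,x^j a^{-1}\in V$ and $y^i b^{-1},\,y^j b^{-1}\in V$, whence
$$x^k=x^{j-i}=(x^j a^{-1})(x^i a^{-1})^{-1}\in VV^{-1}\quad\text{and}\quad y^k=(y^j b^{-1})(y^i b^{-1})^{-1}\in VV^{-1},$$
which is exactly the assertion. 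I expect the only real subtlety to be the choice of the synchronising target $F\times F$ (rather than $F$) together with the correct index range $\{0,1,\dots,|F|^2\}$, which is what forces the bound $k\le|F|^2$; the rest is the routine bookkeeping of cancellation in the group $X$.
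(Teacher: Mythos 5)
Your proof is correct and is essentially the same argument as the paper's: the paper pigeonholes the indices $1,\dots,|F|^2+1$ into the $|F|^2$ sets $N_{a,b}=\{n:x^n\in Va,\ y^n\in Vb\}$ indexed by $(a,b)\in F\times F$, which is exactly your synchronised pigeonhole via $\Phi(i)=(\sigma(x^i),\sigma(y^i))$, followed by the same cancellation $x^{j-i}=x^j(x^i)^{-1}\in Va(Va)^{-1}=VV^{-1}$.
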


\begin{proof}  For every $a,b\in F$, consider the set  $N_{a,b}=\{n\in\{1,\dots,|F|^2+1\}:x^n\in Va,\;y^n\in Vb\}$. By the Pigeonhole Principle, for some $a,b\in F$ the set $N_{a,b}$ contains two numbers $i<j$. Then for the number $k=j-i\le|F|^2$ we have $x^k=x^{j-i}=x^j(x^i)^{-1}\in Va(Va)^{-1}=VV^{-1}$ and $y^k\in (Vb)(Vb)^{-1}=VV^{-1}$.
\end{proof}

\begin{lemma}\label{l:Stein} A group $X$ is polyfinite if there exist $n\in\IN$ and an algebraic set $A\in\A_X$ such that for every $x,y\in X$ there exist $c\in X$ and a positive number $k\le n$ such that $x^ky^k\in c(AA^{-1}\cup A^{-1}A)c^{-1}$.
\end{lemma}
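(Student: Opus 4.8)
The plan is to verify the definition of polyfiniteness directly, with the one-point set $F\defeq\{b\}$ and a single degree bound. Write the given algebraic set as $A=g^{-1}(b)$, where $g(z)=a_0za_1\cdots za_m$ is a semigroup polynomial of some degree $m$ with coefficients $a_0,\dots,a_m\in X$. I claim that for every $x,y\in X$ there is a semigroup polynomial $f$ of degree $\le mn$ with $f(x)=f(y)=b$, which is exactly what polyfiniteness with $F=\{b\}$ demands.

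The key idea, and the step I expect to be the main obstacle, is to apply the hypothesis \emph{not} to the pair $(x,y)$ but to the pair $(x^{-1},y)$. This yields $c\in X$ and a positive $k\le n$ with $x^{-k}y^k=(x^{-1})^ky^k\in c(AA^{-1}\cup A^{-1}A)c^{-1}$, and the whole point of inserting the inverse is that the block $x^{-k}t^{k}$ collapses to the identity under the substitution $t=x$, since $x^{-k}x^k=1$. A naive attempt that feeds $(x,y)$ into the hypothesis and uses a polynomial of the shape $g(c^{-1}t^ky^kcq)$ only pins down one of the two values $f(x),f(y)$, the other being uncontrolled; the inverse trick is precisely what symmetrizes the construction, making a \emph{single} polynomial pin down both coordinates.

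I would then split according to the union. If $x^{-k}y^k=c\,pq^{-1}c^{-1}$ with $p,q\in A$, I set $\Phi(t)\defeq c^{-1}x^{-k}t^kcq$ and $f\defeq g\circ\Phi$; the cancellations $\Phi(x)=c^{-1}x^{-k}x^kcq=q\in A$ and $\Phi(y)=c^{-1}x^{-k}y^kcq=pq^{-1}q=p\in A$ give $f(x)=g(q)=b$ and $f(y)=g(p)=b$. If instead $x^{-k}y^k=c\,p^{-1}q\,c^{-1}$ with $p,q\in A$, I set $\Phi(t)\defeq p\,c^{-1}x^{-k}t^kc$; the same one-line cancellations yield $\Phi(x)=p\in A$ and $\Phi(y)=q\in A$, hence again $f(x)=f(y)=b$. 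In either case both values land at $b$.

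Finally I would check the degree bound: $\Phi$ is a single block $t^k$ flanked by constants, so $\deg\Phi=k\le n$, and therefore $f=g\circ\Phi$ has degree $mk\le mn$, a bound independent of the chosen pair $x,y$. Thus the finite set $F=\{b\}$ together with the degree bound $mn$ witnesses that $X$ is polyfinite. The only genuinely nonroutine point in the whole argument is the decision to feed $(x^{-1},y)$ rather than $(x,y)$ into the hypothesis; once that substitution is made, every remaining step is an immediate group cancellation.
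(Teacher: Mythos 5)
Your proposal is correct and follows essentially the same route as the paper's proof: both apply the hypothesis to the pair $(x^{-1},y)$ so that the block $x^{-k}t^k$ collapses at $t=x$, and both split into the two cases of the union with the same inner polynomials (the paper merely normalizes $A=\alpha^{-1}(1)$ and takes $F=\{1\}$ where you keep $A=g^{-1}(b)$ and $F=\{b\}$). No gaps; the degree bound $\deg f=k\deg(g)\le n\deg(g)$ matches the paper's.
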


\begin{proof} Since the set $A$ is algebraic in the group $X$, there exists a semigroup polynomial $\alpha:X\to X$ such that $A=\alpha^{-1}(1)$. Given any elements $x,y\in X$, by our assumption there exist and a positive number $k\le n$ such that $(x^{-1})^ky^k\in\{cab^{-1}c^{-1},ca^{-1}bc^{-1}\}$ for some $a,b\in A$, and $c\in C$.

If $(x^{-1})^ky^k=cab^{-1}c^{-1}$, then consider the semigroup polynomial $f:X\to X$, $f:z\mapsto \alpha(c^{-1}x^{-k}z^kcb)$, and observe that $\deg f=k\deg(\alpha)\le n\deg(\alpha)$ and
$$\{f(x),f(y)\}=\{\alpha(c^{-1}x^{-k}x^kcb),\alpha(c^{-1}x^{-k}y^kcb)\}=\{\alpha(b),\alpha(a)\}=\{1\},$$witnessing that the group $X$ is polyfinite.

If $(x^{-1})^ky^k=ca^{-1}bc^{-1}$, then consider the semigroup polynomial $f:X\to X$, $f:z\mapsto \alpha(ac^{-1}x^{-k}z^kc)$, and observe that $\deg f=k\deg(\alpha)\le n\deg(\alpha)$ and
$$\{f(x),f(y)\}=\{\alpha(ac^{-1}x^{-k}x^kc),\alpha(ac^{-1}x^{-k}y^kc)\}=\{\alpha(a),\alpha(b)\}=\{1\},$$witnessing that the group $X$ is polyfinite.
\end{proof}

A topology $\tau$ on a semigroup $X$ is called {\em monomial} if for every $a,b\in X^1$, the function $X\to X$, $x\mapsto axb$, is continuous with respect to the topology $\tau$. A topology $\tau$ on a semigroup $X$ is monomial if and only if the semigroup operation $X\times X\to X$, $(x,y)\mapsto xy$, is separately continuous.

\begin{lemma}\label{l:barN} Assume that $\cov(\overline{\mathcal N})=\mathfrak c$. A zero-closed semigroup $X$ is polyfinite if $X$ admits a compact Polish monomial  topology $\tau$.
\end{lemma}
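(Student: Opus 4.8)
The plan is to reduce the problem to a maximal subgroup of $X$, establish the existence of a positive‑measure algebraic set there, and then feed this into Lemmas~\ref{l:power} and \ref{l:Stein}. If $X$ is countable, Theorem~\ref{t:poly} gives $\cov(\A_X)<\w$, so $X$ is polybounded and hence polyfinite by Lemma~\ref{l:pb=>pf}; so I may assume $X$ is uncountable, whence $|X|=\mathfrak c$. Since $\tau$ is monomial, $X$ is a compact semitopological semigroup, so by the Ellis--Namakura theorem and the structure theory of the minimal ideal (which exists by Proposition~\ref{p:ideal}) I can pick an idempotent $e$ in the minimal ideal of $X$ such that $G\defeq eXe$ is the maximal subgroup at $e$. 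The set $G=\{z\in X:ez=z=ze\}$ is an intersection of equalizers of continuous maps, hence closed and so a compact Polish space; algebraically it is a group, and it is a compact Hausdorff semitopological group, so by Ellis' theorem it is in fact a compact Hausdorff \emph{topological} group. Consequently $G$ carries a normalized Haar measure $\mu$ and all its semigroup polynomials are continuous.

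Next I record the reduction to $G$. If $G$ is polyfinite with data $(n,F)$, then so is $X$: given $x,y\in X$, apply polyfiniteness of $G$ to $exe,eye\in G$ to get a polynomial $g(t)=a_0ta_1\cdots ta_n$ with $a_i\in G^1$ and $\{g(exe),g(eye)\}\subseteq F$; substituting $t\mapsto ete$ turns $g$ into a semigroup polynomial $f$ on $X$ of the same degree, with coefficients $a_0e,ea_1e,\dots,ea_ne\in X^1$, satisfying $f(x)=g(exe)$ and $f(y)=g(eye)$, so $\{f(x),f(y)\}\subseteq F$. Thus it suffices to prove that $G$ is polyfinite. If $G$ is finite this is trivial, so assume $G$ is infinite; then $\mu$ is non‑atomic.

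By Lemma~\ref{l:cov-eXe} and Theorem~\ref{t:poly}, $\kappa\defeq\cov(\A_G)=\cov(\A_X)<\max\{2,|X|\}=\mathfrak c=\cov(\overline{\mathcal N})$. Fix a cover $G=\bigcup_{\alpha<\kappa}C_\alpha$ by algebraic sets; since $G$ is a topological group, each $C_\alpha$ is closed. This is the crux of the argument: because $\mu$ is a non‑atomic Borel measure on the compact metric group $G$, the space $G$ cannot be covered by fewer than $\cov(\overline{\mathcal N})$ closed $\mu$‑null sets (the covering number of the $\sigma$‑ideal of closed null sets is the same for every standard non‑atomic measure space, by transfer to the real line). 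As $\kappa<\cov(\overline{\mathcal N})$, some $A\defeq C_{\alpha_0}$ has $\mu(A)>0$. By the Steinhaus--Weil theorem $AA^{-1}$ contains an open neighborhood $U$ of the identity $e$, and by joint continuity of the multiplication I choose a symmetric open neighborhood $W=W^{-1}\ni e$ with $W^4\subseteq U\subseteq AA^{-1}$. By compactness $G=WF$ for some finite $F$.

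Finally, Lemma~\ref{l:power} applied with $V=W$ provides, for all $x,y\in G$, a positive $k\le|F|^2$ with $\{x^k,y^k\}\subseteq WW^{-1}=WW$, whence $x^ky^k\in(WW)(WW)=W^4\subseteq AA^{-1}$. Taking $c=e$, this verifies the hypothesis of Lemma~\ref{l:Stein} with $n=|F|^2$ and the algebraic set $A$, so $G$ is polyfinite, and therefore $X$ is polyfinite by the reduction above. The main obstacle is the crux step: it is exactly here that $\cov(\overline{\mathcal N})=\mathfrak c$ (rather than a category hypothesis) is needed, since the algebraic sets covering $G$ are closed but a priori large in Baire category, and one must guarantee that at least one of them is not Haar‑null; carefully establishing (or citing) the invariance of the closed‑null covering number under passage to the Haar measure space is the delicate point. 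By contrast, the structural input (Ellis' theorem making $G$ a topological group) and the concluding bookkeeping $x^ky^k\in W^4$ are comparatively routine.
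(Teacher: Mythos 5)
Your proof is correct and follows essentially the same route as the paper's: pass to the compact topological group $H=eXe$, use $\cov(\A_H)=\cov(\A_X)<\mathfrak c=\cov(\overline{\mathcal N}_H)$ to find a closed algebraic set of positive Haar measure, and then combine Steinhaus--Weil with Lemmas~\ref{l:power} and \ref{l:Stein}. The only (welcome) difference is that you spell out the reduction of polyfiniteness of $X$ to that of $eXe$ and the degenerate finite/countable cases, which the paper's proof leaves implicit.
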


\begin{proof}  By \cite[2.7]{HS}, $X$ contains an idempotent $e$ such that $H=eXe$ is a subgroup of $X$. By \cite[2.39]{HS}, $H=eXe$ is a compact topological group. By the classical result of Haar (see, e.g. \cite{Alfsen}), the compact topological group $H$ has a Haar measure $\lambda$, i.e., a unique probability invariant Borel measure on $H$. Let $\overline{\mathcal N}_{\!H}$ be the family of all closed subsets of $H$ of Haar measure zero, and $$\textstyle\cov(\overline{\mathcal N}_{\!H})\defeq\min\{|\C|:\C\subseteq\overline{\mathcal N}_{\!H},\;\;H=\bigcup\C\}.$$ It is well-known (see, e.g. \cite[Corollary 5.2]{BRZ}) that every infinite Polish locally compact group $G$ has $\cov(\overline{\mathcal N}_G)=\cov(\overline{\mathcal N})$. In particular,  $\cov(\overline{\mathcal N}_{\!H})=\cov(\overline{\mathcal N})=\mathfrak c$.

By \cite[1.5.1]{Eng}, the Polish space $(X,\tau)$ has cardinality $|X|\le\mathfrak c$. By Theorem~\ref{t:poly} and Lemma~\ref{l:cov-eXe}, $\cov(\A_{\!H})=\cov(\A_X)<\max\{2,|X|\}\le\mathfrak c=\cov(\overline{\mathcal N})=\cov(\overline{\mathcal N}_{\!H})$. Then there exists a set $A\in\A_{H}\setminus\overline{\mathcal N}_{H}$. By the Steinhaus--Weil Theorem \cite{Strom}, $AA^{-1}$ is a neighborhood of the identity in $H$. Since $H$ is a topological group, there exists a neighborhood $U\in\tau$ of the identity such that $U^{-1}UUU^{-1}\subseteq AA^{-1}$. By the compactness of the topological group $H$, there exists a finite set $F\subseteq H$ such that $H=UF$. By Lemma~\ref{l:power}, for every $x,y\in X$ there exists a positive number $k\le |F|^2$ such that $\{x^k,y^k\}\in UU^{-1}$ and hence $x^ky^k\in UU^{-1}UU^{-1}\subseteq AA^{-1}$. By Lemma~\ref{l:Stein}, the group $X$ is polyfinite.
\end{proof}

The following lemma implies Theorem~\ref{t:pf}, announced in the introduction.

\begin{lemma} Assume that  $\cov(\overline{\mathcal N})=\mathfrak c$. A zero-closed semigroup $X$ is polyfinite if $X$ admits a compact Hausdorff semigroup topology.
\end{lemma}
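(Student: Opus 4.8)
The plan is to mimic the proof of Lemma~\ref{l:MA2}, replacing its polyboundedness machinery (Lemma~\ref{l:MA1} together with Theorem~\ref{t:lim}) by the polyfinite counterparts developed in this section, namely Lemma~\ref{l:barN} and Theorem~\ref{t:lim-polyfinite}. Fix a compact Hausdorff semigroup topology $\tau$ on $X$, and let $D$ be the set of all continuous subinvariant pseudometrics on the compact topological semigroup $(X,\tau)$, directed by the relation $\alpha\preceq\beta$ introduced in Lemma~\ref{l:MA2}. For each $\alpha\in D$ let $X_\alpha$ be the quotient of $X$ by the congruence $\alpha^{-1}(0)$, equipped with the metric $\check\alpha$ induced by $\alpha$, let $\pi_\alpha:X\to X_\alpha$ be the quotient homomorphism, and let $\pi^\beta_\alpha:X_\beta\to X_\alpha$ be the bonding homomorphisms. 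Exactly as in Lemma~\ref{l:MA2}, each $X_\alpha$ is a compact metrizable topological semigroup, the poset $D$ is $\sigma$-directed, and $\Sigma=(X_\alpha,\pi^\beta_\alpha:\alpha\preceq\beta\in D)$ is a $\sigma$-directed inverse spectrum whose limit $\lim\Sigma$ is topologically isomorphic to $X$. The verification of this isomorphism (distinct points of $X$ are separated by a continuous subinvariant pseudometric manufactured from a continuous real function on the compact space $X$) is literally the argument already given for Lemma~\ref{l:MA2}, so I would simply invoke it rather than repeat it.

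It then remains to check that every factor $X_\alpha$ is polyfinite, which is exactly where the hypotheses of Lemma~\ref{l:barN} must be matched. Each $X_\alpha$ is compact and metrizable, hence carries a compact Polish topology; as a topological semigroup its operation is (separately, indeed jointly) continuous, so the topology on $X_\alpha$ is monomial; and since $\pi_\alpha:X\to X_\alpha$ is a surjective homomorphism and $X$ is zero-closed, Lemma~\ref{l:zp} guarantees that $X_\alpha$ is zero-closed as well. Thus each $X_\alpha$ satisfies precisely the hypotheses of Lemma~\ref{l:barN}, and under the standing assumption $\cov(\overline{\mathcal N})=\mathfrak c$ that lemma yields that $X_\alpha$ is polyfinite.

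Finally, $\Sigma$ is a $\sigma$-directed inverse spectrum of polyfinite (compact, hence Hausdorff) topological semigroups whose limit is the compact topological semigroup $X\cong\lim\Sigma$, so Theorem~\ref{t:lim-polyfinite} delivers the desired conclusion that $X$ is polyfinite. I do not expect any genuinely new difficulty in this argument: each individual ingredient has already been established, either in Lemmas~\ref{l:zp} and \ref{l:barN} and Theorem~\ref{t:lim-polyfinite}, or in the spectral construction inside the proof of Lemma~\ref{l:MA2}. The only point that requires care is the bookkeeping: confirming that the inverse-spectrum representation of $X$ assembled for the polybounded case is unchanged here, and that every quotient $X_\alpha$ meets the exact ``compact Polish monomial plus zero-closed'' hypotheses demanded by Lemma~\ref{l:barN}. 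With that verified, the polyfinite analogue of Lemma~\ref{l:MA2} goes through essentially verbatim.
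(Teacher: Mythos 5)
Your proposal is correct and follows exactly the route the paper intends: the paper's proof is the one-line remark that the lemma ``can be deduced from Lemma~\ref{l:barN} and Theorem~\ref{t:lim-polyfinite} by analogy with the proof of Lemma~\ref{l:MA2}'', and you have filled in precisely that argument, including the correct verification that each quotient $X_\alpha$ is zero-closed (via Lemma~\ref{l:zp}) and carries a compact Polish monomial topology so that Lemma~\ref{l:barN} applies.
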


\begin{proof} This lemma can be deduced from Lemma~\ref{l:barN} and Theorem~\ref{t:lim-polyfinite} by analogy with the proof of Lemma~\ref{l:MA2}.
\end{proof}

\section{Proof of Corollary~\ref{c:IC-M}}\label{s:IC-M}

In this section we prove Corollary~\ref{c:IC-M}. First we prove a lemma linking injectively $\C$-closed and zero-closed semigroups.

\begin{lemma}\label{l1} Let $\C$ be a class of topological semigroups, containing all $0$-discrete Hausdorff topological semigroups. Every injectively $\C$-closed semigroup $X$ is zero-closed.
\end{lemma}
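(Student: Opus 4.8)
The plan is to prove the contrapositive: assuming $X$ is not zero-closed, I will manufacture an injective homomorphism of $X$ into a $0$-discrete Hausdorff topological semigroup whose image is not closed, which shows that $X$ fails to be injectively $\C$-closed. Since $X$ is not zero-closed, there is a Hausdorff semigroup topology $\tau$ on the $0$-extension $X^0=\{0\}\cup X$ in which $X$ is not closed; as $0$ is the only point of $X^0$ outside $X$, this means exactly that $0\in\overline X$ in $\tau$, i.e. every $\tau$-open neighborhood of $0$ meets $X$. This $\tau$ need not be $0$-discrete, so the essential construction is to refine it. I would let $\tau'$ be the topology on $X^0$ generated by the base
$$\big\{\{x\}:x\in X\big\}\cup\big\{U:0\in U\in\tau\big\}.$$
First I would check that this family really is a base (pairwise intersections of basic sets are again basic: intersections of $\tau$-open neighborhoods of $0$ stay $\tau$-open, and intersections involving singletons are trivial), that the $\tau'$-neighborhood filter at $0$ coincides with the $\tau$-neighborhood filter at $0$, and that every point of $X$ becomes isolated.

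Next I would verify $0$-discreteness and the Hausdorff property. Each $x\in X$ is isolated because $\{x\}\in\tau'$, whereas $0$ is non-isolated since each basic $\tau'$-neighborhood of $0$ is a $\tau$-open set containing $0$, hence meets $X$ by $0\in\overline X$; so $0$ is the unique non-isolated point and $X$ is still not closed in $(X^0,\tau')$. For the Hausdorff property, two distinct points of $X$ are separated by their singleton neighborhoods, and a point $x\in X$ is separated from $0$ by choosing, via the Hausdorffness of $\tau$, disjoint $\tau$-open sets $V\ni x$ and $U\ni0$: then $\{x\}$ and $U$ are disjoint $\tau'$-neighborhoods because $x\in V$ forces $x\notin U$.

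The crux is showing that $\tau'$ is a semigroup topology, and here the continuity of multiplication in $\tau$ is precisely what is needed. At a pair $(a,b)\in X\times X$ the product $ab\in X$ is isolated, so $\{a\}\times\{b\}$ maps into $\{ab\}$. At any pair $(a,b)$ with $a=0$ or $b=0$ the product is $0$; given a basic $\tau'$-neighborhood $U$ of $0$, which is a $\tau$-open set containing $0$, the $\tau$-continuity of multiplication yields $\tau$-open sets $V\ni a$ and $W\ni b$ with $VW\subseteq U$. Shrinking $V$ to $\{a\}$ and $W$ to $\{b\}$ in whichever coordinate lies in $X$, and leaving the coordinate equal to $0$ unchanged, produces $\tau'$-open neighborhoods of $a$ and $b$ whose product is still contained in $U$. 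With $\tau'$ established as a $0$-discrete Hausdorff semigroup topology, the inclusion $X\hookrightarrow(X^0,\tau')$ is an injective homomorphism into a member of $\C$ whose image $X$ is not closed, contradicting injective $\C$-closedness. I expect the continuity check at pairs involving $0$ to be the only delicate point; all remaining verifications are immediate consequences of the definition of $\tau'$.
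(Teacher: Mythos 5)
Your proposal is correct and follows essentially the same route as the paper: both refine the witnessing Hausdorff semigroup topology $\tau$ on $X^0$ by isolating all points of $X$ (your base generates exactly the paper's topology $\tau'$ generated by $\tau\cup\{\{x\}:x\in X\}$) and then invoke injective $\C$-closedness against the $0$-discrete Hausdorff topological semigroup $(X^0,\tau')$. The only difference is cosmetic: you spell out the Hausdorff, $0$-discreteness and continuity checks that the paper dismisses as ``easy to see,'' and your verifications are all correct.
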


\begin{proof} Assuming that a semigroup $X$ is not zero-closed, we can find a Hausdorff semigroup topology $\tau$ on $X^0$ such that $X$ is not closed in the topological semigroup $(X^0,\tau)$. Let $\tau'$ be the topology on $X^0$, generated by the base
$$\tau\cup \big\{\{x\}:x\in X\big\}.$$
It is easy to see that $(X^0,\tau')$ is a $0$-discrete Hausdorff topological semigroup. Then $(X^0,\tau')\in\C$ and the injective $\C$-closedness of $X$ ensures that $X$ is closed in $(X^0,\tau')\in\C$ and hence $0$ is an isolated point in $(X^0,\tau')$ and also in $(X,\tau)$, which contradicts the choice of the topology $\tau$.
\end{proof}

The following Lemma implies Corollary~\ref{c:IC-M}.

\begin{lemma} Let $\C$ be a class of topological semigroups, containing all Polish topological semigroups and all $0$-discrete Hausdorff topological semigroups. An injectively $\C$-closed subsemigroup $X$ is polybounded if $|X|\le\cov(\M)$ and $X$ admits a separable subinvariant metric.
\end{lemma}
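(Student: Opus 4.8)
The plan is to reduce the statement to Lemma~\ref{l:MA1}, whose hypotheses require a \emph{complete} metric. First I would invoke Lemma~\ref{l1}: since $\C$ contains all $0$-discrete Hausdorff topological semigroups, the injectively $\C$-closed semigroup $X$ is zero-closed. Next I record that the separable subinvariant metric $d$ makes $X$ a topological semigroup, because subinvariance yields $d(xy,x'y')\le d(xy,x'y)+d(x'y,x'y')\le d(x,x')+d(y,y')$, so the multiplication is $1$-Lipschitz in each variable.

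The key step, and the main obstacle, is to upgrade $d$ to a complete metric. I would pass to the metric completion $\bar X$ of $(X,d)$. Since the multiplication is uniformly continuous on the dense subset $X$ of the complete space $\bar X$, it extends uniquely to a continuous associative operation on $\bar X$; thus $\bar X$ is a separable complete metric --- hence Polish --- topological semigroup. The inclusion $X\hookrightarrow\bar X$ is then an injective homomorphism into a member of $\C$ (all Polish semigroups lie in $\C$), so injective $\C$-closedness forces $X$ to be closed in $\bar X$; being also dense, $X=\bar X$, and therefore $d$ is complete on $X$. This is precisely where the injective-closedness hypothesis and the presence of all Polish semigroups in $\C$ are used; the remaining arguments are either bookkeeping or quotations of earlier results.

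It then remains to show that a zero-closed semigroup $X$ carrying a separable \emph{complete} subinvariant metric with $|X|\le\cov(\M)$ is polybounded. By Proposition~\ref{p:ideal} the minimal ideal $I=\bigcap_{a\in X}XaX$ is nonempty; let $\overline I$ be its closure, a Polish space. I would split into two cases according to whether $\overline I$ is perfect. If $\overline I$ has an isolated point $o$, then $o\in I$ (as $I$ is dense in $\overline I$) and $\{x\in I:d(x,o)<\e\}=\{o\}$ for small $\e>0$; a singleton is algebraic, hence polybounded, so Theorem~\ref{t:local} gives that $X$ is polybounded. If instead $\overline I$ is perfect, then $\mathfrak c=|\overline I|\le|X|\le\cov(\M)\le\mathfrak c$ forces $\cov(\M)=\mathfrak c$, and Lemma~\ref{l:MA1} applies directly to $X$. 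In either case $X$ is polybounded. Corollary~\ref{c:IC-M} follows at once, since a separable metric space has at most $\mathfrak c=\cov(\M)$ points, so there the hypothesis $|X|\le\cov(\M)$ is automatic.
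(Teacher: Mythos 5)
Your argument is correct and follows the paper's proof in its essential step: completing $(X,d)$ to a Polish topological semigroup and playing injective $\C$-closedness against this completion to force the separable subinvariant metric $d$ to be complete. The only (harmless) difference is the final case split --- the paper distinguishes $|X|=\mathfrak c$ (where $\cov(\M)=\mathfrak c$ follows and Lemma~\ref{l:MA1} applies) from $|X|<\mathfrak c$ (where the Polish space $X$ is countable and Theorem~\ref{t:poly} applies), whereas you distinguish whether $\overline I$ is perfect or has an isolated point, reaching the same conclusion via Lemma~\ref{l:MA1} and Theorem~\ref{t:local}, respectively.
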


\begin{proof} Assume that an injectively $\C$-closed semigroup $X$ has cardinality $|X|\le\cov(\M)$ and admits a separable subinvariant metric $d$.

Let $\overline X$ be the completion of the metric space $(X,d)$. Endow the set $X\times X$ with the metric $$d_2\big((x,y),(x',y'))=d(x,x')+d(y,y')$$and observe that for every $(x,y),(x',y')\in X\times X$ we have
$$d(xy,x'y')\le d(xy,x'y)+d(x'y,x'y')\le d(x,x')+d(y,y')=d_2((x,y),(x',y'))$$ by the subinvariance of the metric $d$. This implies that the semigroup operation $\cdot: X\times X\to X$, $\cdot:(x,y)\mapsto xy$, is uniformly continuous and hence can be extended to a uniformly continuous binary operation $\bar\cdot:\overline X\times\overline X\to\overline X$, see \cite[4.2.17]{Eng}. The associativity of the semigroup operation on $X$ and the density of $X$ in $\overline X$ implies the associativity of the extended binary operation on $\overline X$. Now we see that the separable compelete metric space $\overline X$ endowed with the extended associative operation $\bar \cdot$ is a Polish topological semigroup. Since $X\subseteq \overline X\in\C$, the injective $\C$-closedness of the semigroup $X$ ensures that $X=\overline X$, which means that the separable subinvariant metric $d$ on $X$ is complete.

If $|X|=\mathfrak c$, then the inequality $|X|\le\cov(\M)$ implies $\cov(\M)=\mathfrak c$. By Lemma~\ref{l1}, the injectively $\C$-closed semigroup $X$ is zero-closed and by Theorem~\ref{t:MA}(1), the zero-closed semigroup $X=\overline X$ is polybounded.

If $|X|<\mathfrak c$, then the Polish space $\overline X=X$ is countable by \cite[6.5]{Ke}. By Lemma~\ref{l1}, the injectively $\C$-closed semigroup $X$ is zero-closed and by Theorem~\ref{t:poly}, the countable zero-closed semigroup $X$ is polybounded.
\end{proof}

\section{Proof of Corollary~\ref{c:IC-C}}\label{s:IC-C}

 Let $\C$ be a class of topological semigroups, containing all Hausdorff topological semigroups which are $0$-discrete or compact. Let $X$ be an injectively $\C$-closed subsemigroup $X$ of a compact Hausdorff topological semigroup $K$.  Since $X\subseteq K\in\C$, the injectively $\C$-closed semigroup $X$ is closed in $K$ and hence $X$ carries a compact Hausdorff semigroup topology. By Lemma~\ref{l1}, the injectively $\C$-closed semigroup $X$ is zero-closed. By Theorem~\ref{t:MA}(3) (and \ref{t:MA}(2)), the zero-closed semigroup $X$ is polyfinite (and polybounded) if  $\cov(\overline{\mathcal N})=\mathfrak c$ (and $\cov(\M)=\mathfrak c$).

\section{Proof of Corollary~\ref{c:AC}}\label{s:AC}

Let $\C$ be a class of topological semigroups containing all Tychonoff topological semigroups and $X$ be a zero-closed absolutely $\C$-closed semigroup admitting a power-Lindel\"of subinvariant Hausdorff topology $\tau$.
Endow the semigroup $X$ with the topology $\tau$. Assume that $\cov(\M)=\mathfrak c$.

Let $D$ be the family of all $\tau$-continuous subinvariant pseudometrics on the semigroup $X$. Each pseudometric $\alpha\in D$ determines the congruence $\alpha^{-1}(0)$ on the semigroup $X$. Let $X_\alpha$ be the quotient semigroup of $X$ by the congruence $\alpha^{-1}(0)$, and $\pi_\alpha:X\to X_\alpha$ be the quotient homomorphism. The subinvariant pseudometric $\alpha$ induces a subinvariant metric $\check\alpha$ on $X_\alpha$ such that $\check\alpha(\pi_\alpha(x),\pi_\alpha(y))=\alpha(x,y)$ for any $x,y\in X$.
The subinvariance of the metric $\check\alpha$ implies that the semigroup operation $X_\alpha\times X_\alpha\to X_\alpha$ is uniformly continuous with respect to the metric $\check \alpha$ and hence can be extended to a semigroup operation on the completion $\overline X_\alpha$ of the metric space $(X_\alpha,\check\alpha)$. The absolute $\C$-closedness of the semigroup $X$ implies that $\pi_\alpha[X]=X_\alpha$ is closed in its completion $\overline X_\alpha$, which means that the metric $\check\alpha$ on $X_\alpha$ is complete. The power-Lindel\"of property of the topological space $X$ implies that Lindel\"of property and the separability of the metric space $X_\alpha$.

By Lemma~\ref{l1}, the absolutely $\C$-closed semigroup $X_\alpha$ is zero-closed and by Theorem~\ref{t:MA}(1), the semigroup $X_\alpha$ (admitting the separable complete subinvariant metric $\check\alpha$) is polybounded.

Given two pseudometrics $\alpha,\beta\in D$ we write $\alpha\preceq\beta$ if there exists a positive real number $C$ such that $\alpha(x,y)\le C\cdot\beta(x,y)$ for all $x,y\in X$. This inequality implies the inclusion $\beta^{-1}(0)\subseteq\alpha^{-1}(0)$ which allows us to define a (unique) homomorphism $\pi^\beta_\alpha:X_\beta\to X_\alpha$ such that $\pi_\alpha=\pi^\beta_\alpha\circ\pi_\beta$. The definition of the relation $\alpha\preceq\beta$ implies that the homomorphism $\pi^\beta_\alpha:X_\beta\to X_\alpha$ is Lipschitz and hence continuous. Repeating the argument from the proof of Lemma~\ref{l:MA2}, we can prove that the directed poset $D$ is $\sigma$-directed.

Therefore, $\Sigma=(X_\alpha,\pi_\alpha^\beta:\alpha\preceq\beta\in D)$ is a $\sigma$-directed spectrum of polybounded topological semigroups. The continuous homomorphisms $\pi_\alpha:X\to X_\alpha$, $\alpha\in D$, determine a continuous homomorphism $\pi:X\to\lim\Sigma$, $\pi:x\mapsto(\pi_\alpha(x))_{\alpha\in D}$. Since the topology $\tau$ on $X$ is subinvariant, the homomorphism $\pi$ is injective. The absolute $\C$-closedness of $X$ implies that the image $\pi[X]$ is closed in $\lim\Sigma$. Repeating the argument from the proof of Lemma~\ref{l:MA2}, we can show that $\pi[X]=\lim\Sigma$ and hence $\pi:X\to\lim\Sigma$ is a continuous  isomorphism. Then the space $\lim\Sigma$ is power-Lindel\"of being a continuous image of the power-Lindel\"of space $X$. By Theorem~\ref{t:lim}, the semigroup $\lim\Sigma$ is polybounded and so is its isomorphic copy $X$.

\end{document}